\newtheorem{thm}{Theorem}[section]
\newtheorem{prop}[thm]{Proposition}
\newtheorem{lem}[thm]{Lemma}
\newtheorem{cor}[thm]{Corollary}
\newtheorem{conj}[thm]{Conjecture}
\newtheorem{claim}[thm]{Claim}
\theoremstyle{definition}
\newtheorem{definition}[thm]{Definition}
\theoremstyle{remark}
\newtheorem{remark}[thm]{Remark}
\numberwithin{equation}{section}
\newcommand{\bQ}{\mathbb{Q}}
\newcommand{\bP}{\mathbb{P}}
\newcommand\OO{{\mathcal{O}}}
\newcommand{\roundup}[1]{\lceil{#1}\rceil}
\newcommand{\bR}{{\mathbb R}}
\newcommand\Vol{\text{\rm Vol}}
\newcommand\Nlc{{\rm{Nlc}}}
\newcommand\Supp{{\rm{Supp}}}
\newcommand\mld{{\rm{mld}}}
\begin{document}

\title{Birational boundedness of rationally connected Calabi--Yau 3-folds}
\date{\today}

\author{Weichung Chen}
\address{Graduate School of Mathematical Sciences, the University of Tokyo, Tokyo 153-8914, Japan.}
\email{chenweic@ms.u-tokyo.ac.jp}

\author{Gabriele Di Cerbo}
\address{Department of Mathematics, Princeton University, Princeton, NJ 08540, USA}
\email{dicerbo@math.princeton.edu}

\author{Jingjun Han}
\address{Department of Mathematics, Johns Hopkins University, Baltimore, MD 21218, USA}
\email{jhan@math.jhu.edu}

\author{Chen Jiang}
\address{Shanghai Center for Mathematical Sciences, Fudan University, Jiangwan Campus, 2005 Songhu Road, Shanghai, 200438, China}
\email{chenjiang@fudan.edu.cn}

\author{Roberto Svaldi}
\address{EPFL, SB MATH-GE, MA B1 497 (B\^{a}timent MA), Station 8, CH-1015 Lausanne, Switzerland}
\email{roberto.svaldi@epfl.ch}

\begin{abstract}
We prove that rationally connected Calabi--Yau 3-folds
with kawamata log terminal (klt) singularities form a birationally bounded family, or more generally, rationally connected $3$-folds of $\epsilon$-CY type form a birationally bounded family for $\epsilon>0$. Moreover, we show that the set of $\epsilon$-lc log Calabi--Yau pairs $(X, B)$ with coefficients of $B$ bounded away from zero is log bounded modulo flops. As a consequence, we deduce that rationally connected klt Calabi--Yau $3$-folds with mld bounded away from $1$ are bounded modulo flops. 
\end{abstract}

\keywords{Calabi--Yau 3-folds, boundedness, rationally connected}
\subjclass[2010]{14J30, 14M22, 14E30}
\maketitle
\pagestyle{myheadings} \markboth{\hfill  W. Chen, G. Di Cerbo, J. Han,
C. Jiang, and R. Svaldi
\hfill}{\hfill Birational boundedness of rationally connected Calabi--Yau 3-folds \hfill}

%\tableofcontents

\section{Introduction}
Throughout this paper, we work over an uncountable algebraically closed field
of characteristic 0, for instance, the complex number field $\mathbb{C}$.

A normal projective variety $X$ is a {\it Fano} (resp. {\it Calabi--Yau}) variety if $-K_X$ is ample (resp. $K_X\equiv 0$).
According to the Minimal Model Program, Fano varieties and Calabi--Yau varieties, along with varieties of general type, form fundamental classes in birational geometry as building blocks of algebraic varieties.
Hence, it is interesting to ask whether such kinds of varieties satisfy any finiteness properties, namely, whether they can be parametrized by finitely many algebraic families.
In this regard, Birkar \cite{Bir16a, Bir16b} recently showed that the set of $\epsilon$-lc Fano varieties of dimension $d$ forms a bounded family for fixed $\epsilon>0$ and $d$. This is known as the Borisov--Alexeev--Borisov (BAB) Conjecture.

However, Calabi--Yau varieties in general are not bounded in the category of algebraic varieties: for example, it is well-known that there are infinitely many algebraic families of projective K3 surfaces. Nonetheless, K3 surfaces all fit into a unique topological family, once we consider also the non-algebraic ones. A similar picture holds for abelian varieties. In higher dimension, the situation is even more varied and there are classes of Calabi--Yau varieties for which boundedness is still a hard unresolved question, for example, finiteness of topological types of smooth Calabi--Yau $3$-folds (in the strict sense) has been open for over 40 years and is a pivotal question in string theory.

Therefore, in this article, rather than considering Calabi--Yau varieties in full generality, we focus on a special class: that of rationally connected Calabi--Yau varieties.
Recall that a variety is {\it rationally connected} if any two general points can be connected by a rational curve. %Rationally connected varieties are very close to Fano varieties and 
Recall that all klt Fano varieties are rationally connected by \cite{ZQ, hm07}. Hence, rationally connected Calabi--Yau varieties can be viewed as those Calabi--Yau varieties which behave most like Fano varieties. This class of varieties has received scarce attention so far due to the technical difficulties involved in its treatment. The recent developments in the study of boundedness of algebraic varieties, e.g., \cite{HMX14, Jiang6, Bir16b, Bir18, Bir16a, FS20,F20,Bir20, BDCS}, provide
new tools to approach the issue for this class of Calabi--Yau varieties.

In dimension two, klt Calabi--Yau surfaces (also known as log Enriques surfaces) with worse than du Val singularities are rationally connected Calabi--Yau varieties (see \cite[Proof of Lemma 1.4]{AM} or \cite[Theorem D(1)]{Blache}) and form a bounded family by a result of Alexeev \cite[Corollary 6.10]{AK2}. The works of Blache and Zhang \cite{Blache, DQ1, DQ2} provide a systematic study of such surfaces together with many examples.
Interesting examples of rationally connected klt Calabi--Yau $3$-fold can also be found in \cite{OT} where the authors show that $E^3/G$ is a rational klt Calabi--Yau $3$-fold. Here $E$ is the elliptic curve corresponding to the regular hexagonal lattice in $\mathbb{C}$ and $G$ is the group generated by an automorphism corresponding to multiplication by a primitive third root of unity on the lattice, see \cite[Remark 2.8]{OT} for details. For more examples, we refer to \cite{Cam, COT, CT, COV}. Unfortunately, there are not many known examples of rationally connected klt Calabi--Yau varieties of dimension $d\geq 3$. 
We expect that one can construct more examples by considering finite quotients of smooth Calabi--Yau varieties (cf. \cite{KL}). In the Appendix \ref{appendix}, we provide a sufficient condition for a dlt log Calabi--Yau pair to be rationally connected.

%Another reason for interest is that rationally connected Calabi--Yau varieties appear as bases of elliptic Calabi--Yau manifolds. 
%Despite that the behavior of rationally connected Calabi–Yau varieties is similar to Fano varieties, can be viewed as those Calabi–Yau varietieswhich behave most like Fano varieties, they

We note that rationally connected Calabi--Yau varieties appear as bases of elliptic Calabi--Yau manifolds. For example, in \cite{Ogu93}, Oguiso proved that the base of an elliptic Calabi--Yau $3$-fold $X$ is a rationally connected Calabi--Yau surface when a semiample divisor $D$ defining the fibration satisfies the extra condition $c_2(X)\cdot D =0$. Oguiso provided explicit examples of such fibrations. In general, the structure theorem proved in \cite[Theorem 3.2]{DCS} shows that they appear as the base of an elliptic Calabi-Yau manifold in a rather special situation. Roughly speaking, if the base is a rationally connected Calabi--Yau variety then the fibration is isotrivial, up to a birational modification, and the total space behaves like a product, or more precisely it is of product type. See~\cite{DCS, BDCS} for more details.  

Motivated by Alexeev's work in dimension two, we may expect that boundedness holds for rationally connected klt Calabi--Yau varieties and we are lead to consider the following conjecture.
\begin{conj}\label{conj1}
	Fix a positive integer $d$. The set of all rationally connected klt Calabi--Yau varieties of dimension $d$ forms a bounded family.
\end{conj}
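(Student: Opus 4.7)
The plan is to reduce the conjecture to the Borisov--Alexeev--Borisov type boundedness for Fano-like varieties by constructing on every such $X$ a nontrivial klt log Calabi--Yau complement with coefficients bounded uniformly from below. This strategy parallels Alexeev's approach in dimension two, and, given the tools currently available, it seems to be the only viable route.

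First I would invoke the theory of complements à la Shokurov--Birkar to produce, for every rationally connected klt Calabi--Yau $X$ of dimension $d$, an effective $\mathbb{Q}$-divisor $B\ge 0$ with $(X,B)$ klt and $N(K_X+B)\sim 0$, for some positive integer $N=N(d)$ depending only on $d$. Since $K_X\equiv 0$, producing such a $B$ is equivalent to finding a nonzero section of $|-NK_X|$. Rational connectedness is crucial here: on an abelian variety no multiple of $-K_X$ has sections, while on a rationally connected variety one expects effective non-vanishing for a bounded multiple of $-K_X$ via the abundance of rational curves together with complement-theoretic lifting from Fano-type birational contractions. Once $B$ is produced, rounding its coefficients into the lattice $\tfrac{1}{N}\mathbb{Z}$ forces all nonzero coefficients to be at least $1/N$, which places $(X,B)$ in the class of $\epsilon$-lc log Calabi--Yau pairs whose boundary coefficients are uniformly bounded away from zero.

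Applying the log boundedness theorem modulo flops announced in the abstract would then yield that every such $X$ belongs to a log bounded family, modulo $K$-trivial flops. The final and hardest step is to upgrade \emph{bounded modulo flops} to absolute boundedness. On a klt Calabi--Yau $3$-fold the pseudo-automorphism group need not be finitely generated and the movable cone can exhibit infinitely many chambers, so without control of this chamber structure one cannot conclude actual boundedness of the underlying variety. This is precisely why the main statement of the paper requires either weakening boundedness to birational boundedness, or imposing the extra hypothesis that the minimal log discrepancy be bounded away from $1$, which rules out the accumulation of crepant exceptional divisors responsible for the pathology. Eliminating this hypothesis would likely require a uniform version of the Morrison--Kawamata cone conjecture for klt Calabi--Yau $3$-folds, which is out of reach by current methods and is, in my view, the main obstacle to a full resolution of the conjecture.
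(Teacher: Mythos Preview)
The statement you are addressing is a \emph{conjecture} in the paper; the paper does not prove it, and only establishes weaker conclusions (birational boundedness in Theorem~\ref{main}, and boundedness modulo flops under an mld hypothesis in Theorem~\ref{t.bdd.mld}). So there is no proof in the paper to compare against, only a strategy.

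Your first step contains a genuine error. On a klt Calabi--Yau variety $X$ one has $K_X\equiv 0$, so any effective $\bR$-divisor $B$ with $K_X+B\equiv 0$ satisfies $B\equiv 0$, hence $B=0$. Equivalently, $|-NK_X|$ consists of the zero divisor once $NK_X\sim 0$ (which holds for some $N$ by abundance). Thus no nontrivial complement exists on $X$ itself, and the rational connectedness of $X$ does not change this: the point is numerical, not cohomological. The theory of complements produces $B^+\geq B$ with $N(K_X+B^+)\sim 0$, but starting from $B=0$ on a variety with $K_X\sim_\bQ 0$ the only option is $B^+=0$.

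The paper's actual mechanism for producing a nonzero boundary is birational: since a rationally connected klt Calabi--Yau variety is never canonical, one extracts an exceptional divisor $E$ of log discrepancy $a=a(E,X)<1$ to obtain a crepant model $(Y,(1-a)E)$ (see the proof of Theorem~\ref{t.bdd.mld} and Proposition~\ref{thm MFS}). The coefficient $1-a$ is positive, but bounding it uniformly away from zero is exactly the assertion that $\mld(X)$ is bounded away from $1$, i.e.\ the $1$-Gap conjecture (Conjecture~\ref{c.mld}). This, not a complement construction, is what feeds into Corollary~\ref{cor.bdd.lcy3fold}. So your strategy conflates two distinct open inputs: the $1$-gap for mld (needed to get coefficients $\geq\delta$), and the passage from ``bounded modulo flops'' to ``bounded'' (which you correctly flag). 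The paper's Corollary~\ref{t.bdd.cyklt} makes the first dependence explicit, and Corollary~\ref{c.bdd.index} recasts the whole problem as boundedness of the global index, which is perhaps a cleaner target than the cone conjecture you invoke.
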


As it is not hard to show that the singularities of rationally
klt Calabi--Yau varieties have bounded discrepancies (cf. Lemma \ref{klt=e-klt}), 
Conjecture \ref{conj1} is a special case of the following conjecture generalizing the BAB Conjecture.

\begin{conj}[{cf. \cite{AK2}, \cite[Conjecture 3.9]{MP04}}]\label{conj2}
	Fix a positive real number $\epsilon$ and a positive integer $d$. The set of all $X$ satisfying
	\begin{enumerate}
		\item $\dim X=d,$
		\item there exists a boundary $B$ such that $(X, B)$ is $\epsilon$-klt,
		\item $-(K_X+B)$ is nef, and
		\item $X$ is rationally connected,
	\end{enumerate}
	forms a bounded family.
\end{conj}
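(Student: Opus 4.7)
The plan is to reduce Conjecture \ref{conj2} in dimension $3$ to the Calabi--Yau setting and then to deploy the birational boundedness results advertised in the paper. First, given a rationally connected $\epsilon$-klt pair $(X,B)$ of dimension $3$ with $-(K_X+B)$ nef, I would produce an effective $\bQ$-divisor $D \sim_{\bQ} -(K_X+B)$ with $(X, B+D)$ still $(\epsilon/2)$-klt and $K_X + (B+D) \equiv 0$. If $-(K_X+B)$ is big, one can instead pass to a suitable birational model and reduce to Birkar's BAB; if $-(K_X+B) \equiv 0$ there is nothing to do; in the remaining nef-but-not-big case I would use log abundance in dimension three to make $-(K_X+B)$ semiample and then pick a general element of the $\bQ$-linear system of $-(K_X+B)$, invoking Bertini to preserve $(\epsilon/2)$-klt. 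So the conjecture reduces to boundedness of rationally connected $(\epsilon/2)$-klt log Calabi--Yau $3$-folds.

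Next I would apply the paper's birational boundedness theorem for rationally connected $3$-folds of $\epsilon$-CY type to place the underlying variety $X$ in a birationally bounded family. To upgrade birational boundedness to log boundedness, I would invoke the paper's log boundedness modulo flops for $\epsilon$-lc log Calabi--Yau pairs whose boundary coefficients are bounded away from zero. When $B \equiv 0$ that coefficient hypothesis is vacuous, and one would fall back on the paper's corollary that rationally connected klt CY $3$-folds whose $\mld$ is bounded away from $1$ are bounded modulo flops.

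The next step is to convert \emph{bounded modulo flops} into genuine boundedness. For CY $3$-folds this is accessible via Kawamata's finiteness theorem, which shows that in a fixed birational class there are only finitely many minimal models up to isomorphism; a standard argument over the base of the bounding family then upgrades bounded modulo flops to bounded, provided the singularities remain uniformly $\epsilon$-klt along the flops.

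The main obstacle --- and the reason Conjecture \ref{conj2} remains open even in dimension three through this strategy --- is the hypothesis that the $\mld$ stays bounded away from $1$. Klt Calabi--Yau $3$-folds whose $\mld$s accumulate to $1$ cannot be handled by the volume-and-complement techniques sketched above: one loses the effective birational control that feeds the birational boundedness step. Closing this gap seems to require genuinely new input, for instance the full ACC for $\mld$s on CY $3$-folds, a uniform index bound for CY pairs whose singularities approach the terminal locus, or a finer analysis of the degeneration of rationally connected Calabi--Yau $3$-folds along families where $\mld \to 1$.
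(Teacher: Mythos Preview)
The statement is a conjecture that the paper does not prove; the authors explicitly note that Conjecture~\ref{conj2} remains open even in dimension three. So there is no argument in the paper to compare against, and your write-up is --- as your final paragraph acknowledges --- a strategy sketch that names the principal obstruction rather than a proof. Your identification of that obstruction, the case where $\mld(X)$ accumulates to $1$, is exactly what the paper isolates (Conjecture~\ref{c.mld} and Corollary~\ref{t.bdd.cyklt}).

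That said, several intermediate steps contain their own gaps. First, the reduction to the Calabi--Yau case is not justified by ``log abundance in dimension three'': log abundance concerns semiampleness of a nef $K_X+\Delta$, not of a nef $-(K_X+B)$, and producing a uniformly $\epsilon'$-klt complement $D\sim_{\bR}-(K_X+B)$ is a separate, complement-type statement that does not follow from abundance. Second, your dichotomy ``coefficients bounded away from zero'' versus ``$B\equiv 0$'' is not exhaustive: the boundary $B+D$ you construct can be nonzero with arbitrarily small coefficients (the coefficient $1/m$ of $D$ depends on $X$), and then neither Corollary~\ref{cor.bdd.lcy3fold} (which requires $B>0$ with coefficients $\geq\delta$) nor Theorem~\ref{t.bdd.mld} (which requires $K_X\equiv 0$) applies. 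Third, the passage from ``bounded modulo flops'' to ``bounded'' is not a routine consequence of Kawamata's finiteness of minimal models; making that finiteness uniform over a bounding family is itself a substantial problem that the paper does not resolve. So even before the $\mld\to 1$ issue, the outline would need significant additional input to reach the full strength of Conjecture~\ref{conj2}.
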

In dimension two, Conjecture \ref{conj2} was proved by Alexeev \cite[Theorem 6.9]{AK2}. 
However, it still remains open even in dimension three. 
Recently, Birkar together with the second and fifth named authors  proved that Conjecture~\ref{conj2} holds in dimension 3 up to isomorphism in codimension one \cite[Theorem~1.6]{BDCS}. 
The conjecture remains wide open in higher dimension.

Conjecture \ref{conj2} is already very interesting in the case when $K_X+B\equiv 0$ which can be formulated separately as the following statement. 
\begin{conj}\label{conj3}
	Fix a positive real number $\epsilon$ and a positive integer $d$. The set of rationally connected $d$-dimensional varieties of $\epsilon$-CY type forms a bounded family.
\end{conj}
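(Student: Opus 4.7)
The plan is to attack the conjecture in two stages: first, establish birational boundedness of the family, and then bootstrap this to honest boundedness. The first stage generalizes the main result of this paper from dimension three to arbitrary dimension; the second stage combines the $\epsilon$-lc log Calabi--Yau boundedness modulo flops theorem (also proven in the paper) with a uniform control over the chamber structure of the movable cone.

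For birational boundedness, let $(X,B)$ be an $\epsilon$-klt pair with $K_X+B\sim_{\mathbb{Q}} 0$ and $X$ rationally connected. When $B=0$, $X$ is a klt Calabi--Yau and Conjecture \ref{conj1} plays the role of the inductive hypothesis. When $B\neq 0$, I would pass to a small $\mathbb{Q}$-factorialization and run a $(K_X+(1-t)B)$-MMP for small rational $t>0$; since $K_X+(1-t)B\equiv -tB$ is not pseudo-effective, the program terminates with a Mori fiber space $Y\to Z$ on which $(Y,(1-t)B_Y)$ is still $\epsilon'$-klt for some $\epsilon'>0$ depending only on $\epsilon$. When $\dim Z=0$, $Y$ is an $\epsilon'$-lc Fano variety, bounded by Birkar's proof of BAB. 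When $\dim Z>0$, the general fiber is an $\epsilon'$-klt Fano variety, again bounded by BAB, while the base $Z$ inherits, through the canonical bundle formula, the structure of a rationally connected generalized log Calabi--Yau pair of lower dimension, to which one applies induction (in a suitable generalized-pair version of the statement). Birkar's effective boundedness of complements then yields a uniformly bounded $N=N(\epsilon,d)$ such that $|-NK_X|$ defines a birational map onto a bounded family of models.

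Once birational boundedness is in hand, one invokes the $\epsilon$-lc log Calabi--Yau boundedness modulo flops theorem of this paper (with its coefficients-bounded-away-from-zero hypothesis verified by the effective complements from the previous step) to obtain boundedness of $X$ modulo flops. The main obstacle is the removal of the ``modulo flops'' qualification: a Calabi--Yau $d$-fold can admit infinitely many small $\mathbb{Q}$-factorial modifications, and controlling them uniformly as $X$ varies in a family requires an effective form of the Morrison--Kawamata cone conjecture for the movable cone, sufficiently uniform across the family to ensure only finitely many flop chambers arise. In the rationally connected $\epsilon$-CY setting one might hope that the $\epsilon$-klt assumption rules out sufficiently degenerate flopping walls to permit a direct argument, but making this precise appears to be the deepest step and is currently the principal open problem blocking the full conjecture.
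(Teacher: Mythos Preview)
The statement you are attempting is labelled a \emph{conjecture} in the paper and is not proved there; the paper establishes only the birational version in dimension three (Theorem~\ref{main2}) and, under the extra hypothesis that $B>0$ with coefficients bounded below, log boundedness \emph{modulo flops} (Theorem~\ref{t.bdd.lcy3fold}, which itself assumes Conjecture~\ref{conj3} in lower dimension). So there is no ``paper's own proof'' to compare against, and your proposal should be read as a strategy rather than a proof.

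That said, the strategy has genuine gaps beyond the one you flag at the end. First, the case $B=0$ is not handled: you write that ``Conjecture~\ref{conj1} plays the role of the inductive hypothesis'', but Conjecture~\ref{conj1} is the $B=0$ special case of the very statement you are trying to prove in dimension $d$, not in lower dimension, so invoking it is circular. This is precisely the case where your MMP reduction offers no leverage and where Birkar's complements yield only the trivial complement, so the coefficients-bounded-below hypothesis of Theorem~\ref{t.bdd.lcy3fold} fails outright. Second, the inductive step via the canonical bundle formula lands you on a \emph{generalized} pair $(Z,B_Z+M_Z)$ on the base; applying induction therefore requires a generalized-pair strengthening of Conjecture~\ref{conj3}, which is strictly stronger than what you are inducting on and is likewise open. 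Third, as you yourself acknowledge, passing from ``bounded modulo flops'' to ``bounded'' would require a uniform finiteness of small $\bQ$-factorial modifications across the family, something of Morrison--Kawamata type, and no such result is available even for a single rationally connected klt Calabi--Yau threefold. In short, your outline correctly identifies the architecture one expects a proof to have, but each of its three load-bearing steps is currently an open problem.
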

Here, a normal projective variety $X$ is \emph{of $\epsilon$-CY type} if there exists an effective $\bR$-divisor $B$ such that $(X, B)$ is an $\epsilon$-klt log Calabi--Yau pair. 

In \cite[Theorem~1.4]{BDCS}, recently an affermative answer was given showing that the birational boundedness version of for Conjecture~\ref{conj3} holds in the case when the torsion index of $K_X+B\equiv 0$ is bounded. 
However, to conclude Conjecture \ref{conj1} from Conjecture \ref{conj2}, it is necessary to consider the case when the coefficients of $B$ do not belong to a fixed DCC set or $B= 0$, as the minimal log discrepancy is not known to satisfy the ACC property.

The goal of this article is to study Conjecture \ref{conj3} in dimension three and to establish several birational boundedness results. 

The first result provides an affirmative answer 
to the birational boundedness in Conjecture \ref{conj1} for dimension three.
\begin{thm}\label{main}
	The set of all rationally connected klt Calabi--Yau
	$3$-folds forms a birationally bounded family.
\end{thm}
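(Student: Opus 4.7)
The plan is to reduce Theorem \ref{main} to the birational boundedness of rationally connected $3$-folds of $\epsilon$-CY type via Lemma \ref{klt=e-klt}, and then to deduce the latter by combining a divisorial extraction argument with the log-bounded-modulo-flops statement announced in the abstract. A separate argument is required for the borderline case in which the extraction degenerates, and this is where the main technical difficulty lies.

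First I would invoke Lemma \ref{klt=e-klt} to obtain a universal constant $\epsilon = \epsilon(3) > 0$ such that every rationally connected klt Calabi--Yau $3$-fold $X$ is $\epsilon$-klt. In particular, $(X, 0)$ is already an $\epsilon$-klt log Calabi--Yau pair, so $X$ is of $\epsilon$-CY type; it therefore suffices to prove the birational boundedness of rationally connected $3$-folds of $\epsilon$-CY type, i.e., the birational version of Conjecture \ref{conj3} in dimension $3$. This reduction disposes of the purely discrepancy-theoretic part of the problem and reframes the question as one about log Calabi--Yau pairs, to which the techniques of \cite{DCS} and of the present paper are tailored.

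Next, fix such an $X$ and suppose that $\mld(X, 0) \le 1 - \delta$ for some fixed $\delta > 0$. Then by \cite{BCHM} one can perform a $\bQ$-factorial crepant extraction $f \colon Y \to X$ whose unique exceptional prime divisor $E$ satisfies $a(E, X, 0) \le 1 - \delta$. The pair $(Y, B_Y) := (Y, (1 - a(E, X, 0))\, E)$ is then an $\epsilon$-klt log Calabi--Yau pair crepant to $(X, 0)$, with sole boundary coefficient lying in $[\delta, 1 - \epsilon)$, hence bounded away from zero by $\delta$. Since $Y$ is birational to $X$, it is still rationally connected, and the log-bounded-modulo-flops statement of the abstract then yields that the pairs $(Y, B_Y)$ form a log bounded family modulo flops; in particular, $X$ is birationally bounded. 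Note that because the extracted coefficient varies continuously (it is not a priori constrained to lie in a DCC set), one cannot apply \cite[Theorem 1.3]{DCS} here; the stronger "bounded-away-from-zero" statement is essential.

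The main obstacle is the complementary range $\mld(X, 0) > 1 - \delta$ for every $\delta > 0$, that is, the case in which $X$ is canonical: here every valuation $E$ over $X$ has $a(E, X, 0) \ge 1$ and the above extraction yields only the trivial boundary, so the strategy breaks down. To handle this case one must exploit the rigidity of canonical $3$-fold singularities, for instance by passing to a small $\bQ$-factorial terminalization $Y \to X$---which is terminal, Calabi--Yau, and rationally connected---and then establishing birational boundedness of rationally connected terminal Calabi--Yau $3$-folds directly, e.g. by combining local index-one cover techniques with an effective global statement (an effective non-vanishing, a bounded anti-canonical complement, or a volume estimate on a suitable Cartier divisor derived from perturbing $0 \equiv K_Y$ by a small ample). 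This last step is the genuine technical hurdle that must be overcome to complete the proof.
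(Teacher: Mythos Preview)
Your reduction via Lemma~\ref{klt=e-klt} to the birational boundedness of rationally connected $3$-folds of $\epsilon$-CY type is exactly the paper's first step. From that point on, however, the paper takes a completely different route: it proves this statement directly as Theorem~\ref{main2} in Section~\ref{sec 3}, by passing to a terminal Mori fibration $X' \to Z$ (Proposition~\ref{thm MFS}), constructing an effective ample divisor $-K_{X'} + nG$ with bounded volume and a birational multiple (Theorem~\ref{NV}), and then invoking Chow-variety boundedness. No divisorial extraction and no lower bound on boundary coefficients enters that argument.

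Your proposed route has a genuine gap. In Case~1 you fix $X$ and choose $\delta = \delta(X)$ with $\mld(X) \le 1 - \delta(X)$, but Corollary~\ref{cor.bdd.lcy3fold} requires a \emph{single} $\delta$ valid for the whole family. Securing such a uniform $\delta$ is precisely the $1$-Gap conjecture for mld (Conjecture~\ref{c.mld}), which is open; the paper records exactly this conditional implication as Corollary~\ref{t.bdd.cyklt}, and your extraction step is essentially the proof of Theorem~\ref{t.bdd.mld}. Your Case~2 is, moreover, vacuous: a rationally connected klt Calabi--Yau variety is never canonical, since a canonical $X$ with $K_X \sim_{\bQ} 0$ carries a nonzero pluricanonical form on any resolution, contradicting rational connectedness. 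Thus the real obstruction is not $\mld(X) \ge 1$ but a possible sequence with $\mld(X_i) \nearrow 1$, which your dichotomy does not cover---and this is exactly why the paper needs the independent Section~\ref{sec 3} machinery rather than the Section~\ref{sec 4} results you invoke.
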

Theorem \ref{main} is a special case of the following theorem, which gives an affirmative answer to the birational boundedness for Conjecture \ref{conj3} in dimension three.
\begin{thm}\label{main2}
	Fix a positive real number $\epsilon$. The set of rationally connected $3$-folds of $\epsilon$-CY type forms a birationally bounded family.
\end{thm}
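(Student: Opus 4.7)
The plan is to run an MMP to reduce to a Mori fiber space $\pi\colon Y\to Z$, bound $Z$ by known boundedness theorems, and then use Birkar's bounded complements together with the log boundedness modulo flops theorem established earlier in this paper to bound the total space. First, choose an effective $\bR$-divisor $B$ with $(X,B)$ $\epsilon$-klt and $K_X+B\equiv 0$, and after a small $\bQ$-factorial modification assume $X$ is $\bQ$-factorial. Since $X$ is rationally connected, $K_X$ is not pseudo-effective and any $K_X$-MMP terminates with a Mori fiber space $\pi\colon Y\to Z$; crucially, $K_X+B\equiv 0$ forces every $K_X$-negative step to be $(K_X+B)$-trivial, so log discrepancies of all divisors are preserved and the pushforward $(Y,B_Y)$ is again $\epsilon$-klt log Calabi--Yau with $Y$ rationally connected.

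Next, bound $Z$ by case analysis on $\dim Z$. If $\dim Z=0$, then $Y$ is an $\epsilon$-klt Fano $3$-fold of Picard number one, bounded by Birkar's BAB, and we are done. If $\dim Z\ge 1$, the Ambro--Kawamata canonical bundle formula equips $Z$ with a generalized $\epsilon'$-klt log Calabi--Yau pair structure (for some $\epsilon'=\epsilon'(\epsilon)>0$, by controlling the moduli part in dimension $\le 2$). Since $Z$ is rationally connected of dimension $\le 2$, it is a point, $\bP^1$, or a rationally connected generalized klt log Calabi--Yau surface bounded by Alexeev-type results in dimension two.

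To bound $Y$ itself, apply the log boundedness modulo flops theorem of this paper, which requires a boundary whose coefficients are uniformly bounded below. Produce such a boundary by invoking Birkar's bounded complements: there is a uniformly bounded positive integer $N$ and $B_Y^+\ge B_Y$ with $N(K_Y+B_Y^+)\sim 0$, coefficients of $B_Y^+$ in $\tfrac{1}{N}\ZZ_{>0}$, and, after a refinement exploiting the Mori fibration $\pi$ and the boundedness of $Z$, with $(Y,B_Y^+)$ uniformly $\epsilon''$-lc. The log boundedness modulo flops theorem then applies to $(Y,B_Y^+)$ and yields that $Y$---hence $X$---is birationally bounded.

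The main obstacle is the last step: Birkar's complements theorem gives a complement with coefficients bounded below but a priori only lc, not uniformly $\epsilon''$-lc, so a refinement using the fibration structure is needed to retain the required $\epsilon''$-lc property. The case $\dim Z=1$ with general fiber a klt Calabi--Yau surface is particularly delicate, as the fibration is only birationally isotrivial by \cite[Theorem 3.2]{DCS}, and one may need to first pass to a common birational model realizing this isotriviality before constructing the refined complement on $Y$.
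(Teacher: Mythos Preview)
Your reduction to a Mori fiber space $\pi\colon Y\to Z$ and the boundedness of $Z$ are correct and parallel the paper's approach. The genuine gap is precisely the one you flag as ``the main obstacle'': producing a boundary $B_Y^+$ with coefficients bounded below \emph{and} $(Y,B_Y^+)$ uniformly $\epsilon''$-lc. Birkar's bounded complements give only an lc complement; upgrading this to an $\epsilon''$-lc complement is not a refinement one can carry out with the fibration structure alone, and no such result is available in the literature. In fact, the existence of such $\epsilon''$-lc complements with coefficients bounded below is essentially as hard as (and closely related to) Conjecture~\ref{conj LCT higher}, which the paper explicitly leaves open in dimension~$3$. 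So your proof sketch stops exactly at the point where the real work begins, and the suggested ``refinement'' is not something one knows how to do.

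Your closing remark about ``the case $\dim Z=1$ with general fiber a klt Calabi--Yau surface'' is also off: for a Mori fiber space $Y\to Z$ the general fiber is Fano, not Calabi--Yau, and the isotriviality result \cite[Theorem~3.2]{DCS} concerns elliptic Calabi--Yau fibrations, which is a different setting.

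The paper's proof takes a completely different route that bypasses the complement problem. After reducing to a terminal Mori fibration $f\colon X\to Z$ with $-K_X$ not big, it directly constructs an explicit ample divisor $-K_X+nG$ (with $G$ pulled back from $Z$) and shows that $|-3K_X+8nG|$ is birational with bounded volume (Theorem~\ref{NV}). The key technical input replacing the missing $\epsilon''$-lc complement is a surface lct-type estimate (Theorem~\ref{conj LCT}): restricting to a general $G$, one shows $(G,(1+\lambda)B|_G)$ is klt for a uniform $\lambda$, so the non-lc locus of $(X,(1+\lambda)B)$ lies in fibers of $f$, and Fujino's length-of-extremal-rays theorem then yields ampleness of $-K_X+nG$ for bounded $n$. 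Birational boundedness follows from bounded volume via Chow varieties. This avoids needing any global $\epsilon''$-lc structure with bounded coefficients.
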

Theorem \ref{main2} can be viewed as a generalization of \cite[Theorem 1.3]{DCS} in dimension $3$, and also a generalization of the birational BAB conjecture in dimension $3$ \cite{Jiang6}.

We moreover focus on $\epsilon$-lc log Calabi--Yau pairs $(X, B)$ such that the coefficients of $B$ are bounded from below. For such pairs, we show that log boundedness modulo flops holds: this is a stronger version of (log) birational boundedness, see Section \ref{sec.bdd} for the definition.

\begin{thm}[{=Corollary \ref{cor.bdd.lcy3fold}}]
	Fix positive real numbers $\epsilon$, $\delta$. 
	Then, the set of pairs $(X,B)$ satisfying
	\begin{enumerate}
		\item $(X,B)$ is an $\epsilon$-lc log Calabi--Yau 
		pair of dimension $3$,
		%\item $B >0$, 
		\item $X$ is rationally connected, and
		\item $B>0$, and the coefficients of $B$ are at least $\delta$,
	\end{enumerate}
	forms a log bounded family modulo flops. %a family log bounded in codimension one.
\end{thm}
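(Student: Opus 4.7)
The plan is to promote the birational boundedness of Theorem~\ref{main2} to log boundedness modulo flops via an MMP run inside a bounded family of log resolutions. By Theorem~\ref{main2}, one may fix a projective log smooth family $(W, D) \to T$ over a scheme $T$ of finite type such that for every pair $(X, B)$ in our collection there exist $t\in T$ and a birational map $\phi_t\colon W_t \dashrightarrow X$ whose exceptional divisors, together with the strict transform of $B$, are contained in $\Supp D_t$. Let $F_1,\dots,F_k$ denote the prime divisors on $W_t$ contracted by $\phi_t$, and write the strict transform of $B$ as $\sum_j b_j B_j$ with $b_j \in [\delta, 1-\epsilon]$, the upper bound coming from $(X,B)$ being $\epsilon$-lc. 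The same $\epsilon$-lc hypothesis gives $a(F_i, X, B) + 1 \geq \epsilon$ for every $i$.

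Set $\Theta_t := \sum_j b_j B_j + \sum_i (1-\epsilon) F_i$. Then $(W_t, \Theta_t)$ is an $\epsilon$-lc pair with snc support and coefficients in $[\delta, 1-\epsilon]$, and
\[
K_{W_t} + \Theta_t \;=\; \phi_t^{*}(K_X + B) + E_t, \qquad E_t \;:=\; \sum_i \bigl(a(F_i, X, B) + 1 - \epsilon\bigr) F_i,
\]
where $E_t \geq 0$ is $\phi_t$-exceptional. Since $K_X+B \equiv 0$, we obtain $K_{W_t} + \Theta_t \equiv E_t$. By the negativity lemma and termination of klt threefold MMP, any $(K_{W_t}+\Theta_t)$-MMP contracts exactly $\Supp E_t$ and terminates with a minimal model $(X'_t, \Theta'_t)$ with $K_{X'_t} + \Theta'_t \equiv 0$. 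Since $(X, B)$ is itself a weak log minimal model of $(W_t, \Theta_t)$ with the same discrepancies, $(X'_t, \Theta'_t)$ is connected to $(X, B)$ by a sequence of flops, and $\Theta'_t$ equals the birational transform of $B$.

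The family $\{(W_t, \Theta_t)\}_{t\in T}$ is thus a log bounded family of $\epsilon$-lc pairs with boundary coefficients uniformly in $[\delta, 1-\epsilon]$. Applying the family version of the MMP with scaling over $T$ yields that the collection $\{(X'_t, \Theta'_t)\}$ also forms a log bounded family, whence $\{(X, B)\}$ is log bounded modulo flops. The main obstacle is this last point: one must know that the relative MMP on $(W, \Theta)/T$ produces a bounded output, which ultimately relies on finiteness of minimal models for klt threefold pairs with uniformly bounded singularities and coefficients, in the spirit of Hacon--McKernan--Xu and Birkar.
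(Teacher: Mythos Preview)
Your approach is different from the paper's and has two genuine gaps.

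First, Theorem~\ref{main2} asserts only that the \emph{varieties} $X$ are birationally bounded; it says nothing about the support of $B$. To obtain a family $(W,D)\to T$ with $D_t$ containing the strict transform of $B$, you must first establish \emph{log} birational boundedness of the pairs $(X,B)$. This requires bounding the degree of $\Supp B$ against a fixed polarization. In your setting this is repairable (use $\Supp B \leq \tfrac{1}{\delta}B \equiv -\tfrac{1}{\delta}K_X$ together with the bounded polarization on the Mori-fibered model produced in Theorem~\ref{NV}), but you have not supplied the argument.

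Second, and more seriously, your divisor $\Theta_t$ has coefficients $b_j$ that depend on the particular pair $(X,B)$, not just on $t$. Hence there is no single divisor $\Theta$ on $W$ for which one can run ``the relative MMP on $(W,\Theta)/T$''; different pairs lying over the same $t$ demand different boundaries and potentially different MMP runs. The correct fix is to invoke finiteness of minimal models over the full polytope of boundaries supported on $D$ with coefficients in $[0,1-\tfrac{\epsilon}{2}]$, as in \cite[Corollary~1.1.5]{BCHM}. This is exactly the content of Proposition~\ref{p.blow.ups.fam} in the paper. You gesture at it in your final sentence, but calling it ``the main obstacle'' and deferring to work ``in the spirit of'' \cite{HMX14, Bir16a} is not a proof; this step is where the actual content lies.

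For comparison, the paper does \emph{not} bootstrap from Theorem~\ref{main2}. It instead runs a $K_X$-MMP to a Mori fiber space $Y\to Z$, bounds $Z$ via BAB and the canonical bundle formula (here $\dim Z\leq 2$, so Conjecture~\ref{conj3} is known), and then applies a relative version of Special BAB (Theorem~\ref{t.bdd.mfs}) to log-bound $(Y,B_Y)$ directly. The return from $(Y,B_Y)$ to $(X,B)$ is handled precisely by the polytope-of-models argument isolated in Proposition~\ref{p.blow.ups.fam}. This route sidesteps your first gap entirely, since it produces genuine log boundedness of $(Y,B_Y)$ rather than mere birational boundedness of $X$.
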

In fact, this result can be generalized to any dimension modulo Conjecture \ref{conj3} in lower dimensions, see Theorem \ref{t.bdd.lcy3fold}. It is a consequence of a relative version of the Special BAB \cite[Theorem 1.4]{Bir16a} (see Theorem \ref{t.bdd.mfs}).

We apply this result to show that rationally connected klt Calabi--Yau $3$-folds with mld bounded away from $1$ are bounded modulo flops.
\begin{thm}[{=Theorem \ref{t.bdd.mld}}]\label{thm: bdd modulo flops 3-dim CY RC}
	Fix $0<c<1$.
	Let $\mathcal{D}$ be the set of varieties $X$ such that
	\begin{enumerate}
		%\item $X$ is a projective klt $3$-fold
		\item $X$ is a rationally connected Calabi--Yau $3$-fold, and
		\item $0<\mld(X)< c$.
	\end{enumerate}
	Then $\mathcal{D}$ is bounded modulo flops. 
\end{thm}

This theorem has several interesting immediate applications to the boundedness problem. We show that for rationally connected klt Calabi--Yau $3$-folds, boundedness modulo flops is equivalent to the boundedness of global indices (Corollary \ref{c.bdd.index})
and that boundedness modulo flops holds modulo 1-Gap conjecture for minimal log discrepancies on $3$-folds, which is a special case of Shokurov's ACC conjecture (Corollary \ref{t.bdd.cyklt}). Finally, we establish that the boundedness modulo flops holds for those rationally connected klt Calabi--Yau varieties which are quasi-\'etale quotients of irregular
varieties (Corollary \ref{t.bdd.ab.cy}). We refer the reader to Section \ref{section 7} for details.

\medskip

\noindent\textbf{Postscriptum}. After our paper was first posted on the arXiv, the fourth named author proved the 1-Gap conjecture, Conjecture~\ref{c.mld}, for minimal log discrepancies on $3$-folds \cite[Theorem 1.3]{Jiang-gap}, see also \cite[Theorem 1.4]{JLX19}. Thus the assumption ``$0<\mld(X)<c$'' in Theorem \ref{thm: bdd modulo flops 3-dim CY RC} could be replaced by ``$0<\mld(X)<1$'', and Corollary \ref{t.bdd.cyklt} holds unconditionally, see \cite[Theorem 1.6]{Jiang-gap}.

\medskip

\noindent\textbf{Acknowledgments}.
The authors would like to thank Yoshinori Gongyo for suggesting this topic.
WC was supported by NAKAMURA scholarship and the UTokyo System of Support for Graduate Research.
WC would like to thank his advisor Yoshinori Gongyo for supporting a visit to the University of Cambridge, where he had valuable discussions with Caucher Birkar and RS. 
DC was supported in part by NSF Grant DMS-1702358.
JH would like to thank his advisors Gang Tian and Chenyang Xu in particular for constant support and encouragement. 
CJ was supported by JSPS KAKENHI Grant Number JP16K17558 and World Premier International Research Center Initiative (WPI), MEXT, Japan. 
RS was partially supported by Churchill College, Cambridge.
Part of this work was completed during a visit of RS to Princeton University. 
RS would like to thank Princeton University for its hospitality and the nice working environment, and J\'anos Koll\'ar for funding his visit.
We are grateful to Keiji Oguiso for discussions on examples, and Zhiyu Tian for discussions related to the material in the Appendix. 
We thank the referees for useful comments.

\section{Preliminaries}

We adopt the standard notation and definitions in \cite{KMM} 
and \cite{KM}, and we will freely use them.

\subsection{Pairs, singularities, and mld}
A {\it log pair} $(X, B)$ consists of a normal projective variety $X$ 
and an effective $\bR$-divisor $B$ on $X$ such that $K_X+B$ is $\bR$-Cartier.

Let $f\colon Y\rightarrow X$ be a log
resolution of the log pair $(X, B)$, write
\[
K_Y =f^*(K_X+B)+\sum a_iF_i,
\]
where $\{F_i\}$ are distinct prime divisors.  
For a non-negative real number $\epsilon$, the log pair $(X,B)$ is called
\begin{itemize}
	\item[(a)] \emph{$\epsilon$-kawamata log terminal} (\emph{$\epsilon$-klt},
	for short) if $a_i> -1+\epsilon$ for all $i$;
	\item[(b)] \emph{$\epsilon$-log canonical} (\emph{$\epsilon$-lc}, for
	short) if $a_i\geq  -1+\epsilon$ for all $i$;
	\item[(c)] \emph{terminal} if  $a_i> 0$ for all $f$-exceptional divisors $F_i$ and all $f$;
	\item[(d)] \emph{canonical} if  $a_i\geq  0$ for all $f$-exceptional divisors $F_i$ and all $f$.
\end{itemize}

Usually we write $X$ instead of $(X,0)$ in the case $B=0$.
Note that $0$-klt (resp., $0$-lc) is just klt (resp., lc) in the usual sense. Also note that 
$\epsilon$-lc singularities only make sense if $\epsilon\in [0,1]$, and  $\epsilon$-klt 
singularities only make sense if $\epsilon\in [0,1)$,

The {\it log discrepancy} of the divisor $F_i$ is defined to be $a(F_i, X, B)=1+a_i$.
It does not depend on the choice of the log resolution $f$.
$F_i$ is called a {\it non-lc place} of $(X, B)$  if $a_i< -1$.
A subvariety $V\subset X$ is called a {\it non-lc center} of 
$(X, B)$ if it is the image of a non-lc place. 
The {\it non-lc locus} $\text{Nlc}(X, B)$ is the union of 
all non-lc centers of $(X, B)$.

Let $(X, B)$ be an lc pair and $Z \subset X$ an irreducible closed subset with $\eta_Z$ the generic point of $Z$. 
The {\it minimal log discrepancy} of $(X, B)$ over $Z$ is defined as
\[
\mathrm{mld}_Z(X, B)=\inf\{a(E,X,B)\mid \text{center}_{X}(E)\subset Z\},\]
and the minimal log discrepancy of $(X, B)$ at $\eta_Z$ is defined as
\[
\mathrm{mld}_{\eta_Z}(X, B)=\inf\{a(E,X,B)\mid \text{center}_{X}(E)= Z\}.\]
For simplicity, we just write $\mathrm{mld}(X, B)$ instead of $\mathrm{mld}_X(X, B)$.
%where $a(E,X,B)$ is the log discrepancy of $E$. 
%The {\it minimal log discrepancy} of $(X, B)$ is defined as
%\[
%\mathrm{mld}(X, B)=\min\{\mathrm{mld}_p(X, B)\mid p\in X\}.
%\]

\subsection{Log Calabi--Yau pairs}
The log pair $(X, B)$ is called a
\emph{log Calabi--Yau pair} if $K_X+B\equiv 0$. 
Recall that if $(X, B)$ is lc, this is equivalent 
to $K_X+B\sim_\bR 0$ by \cite{G}.

A normal projective variety $X$ is \emph{of $\epsilon$-CY type} 
if there exists an effective $\bR$-divisor $B$ such that $(X, B)$ 
is an $\epsilon$-klt log Calabi--Yau pair. 

\subsection{Terminal Mori fibrations}
A projective morphism $f\colon X\to Z$ between normal 
projective varieties is called a {\it terminal Mori fibration} 
(or {\it terminal Mori fiber space}) if
\begin{enumerate}
	\item $X$ is $\bQ$-factorial with terminal singularities;
	\item $f$ is a {\it contraction}, i.e., $f_*\OO_X=\OO_Z$;
	\item $-K_X$ is ample over $Z$;
	\item $\rho(X/Z)=1$;
	\item $\dim X > \dim Z$.
\end{enumerate}

We say that $X$ is endowed with a 
{\it terminal Mori fibration structure} if there 
exists a terminal Mori fibration $X\to Z$. 
In particular, in this situation, $X$ has at most
$\bQ$-factorial terminal singularities by definition.

\subsection{Bounded pairs}\label{sec.bdd}
A collection of varieties $ \mathcal{D}$ is
said to be \emph{bounded} (resp., 
\emph{birationally bounded}, or \emph{bounded in codimension one})
if there exists 
$h\colon \mathcal{Z}\rightarrow S$ a projective morphism 
of schemes of finite type such that
each $X\in \mathcal{D}$ is isomorphic (resp., birational, 
or isomorphic in codimension one) to $\mathcal{Z}_s$ 
for some closed point $s\in S$.

We say that a collection of log pairs $\mathcal{D}$ is 
{\it log birationally bounded} (resp.,  \emph{log bounded}, 
or \emph{log bounded in codimension one})
if there is a  quasi-projective scheme $\mathcal{Z}$, a 
reduced divisor $\mathcal{E}$ on $\mathcal Z$, and a 
projective morphism $h\colon \mathcal{Z}\to S$, where 
$S$ is of finite type and $\mathcal{E}$ does not contain 
any fiber, such that for every $(X,B)\in \mathcal{D}$, 
there is a closed point $s \in S$ and a birational
map $f \colon \mathcal{Z}_s \dashrightarrow X$ 
(resp., isomorphic, or isomorphic in codimension one)
such that $\mathcal{E}_s$ contains the support of $f_*^{-1}B$ 
and any $f$-exceptional divisor (resp., $\mathcal{E}_s$ 
coincides with the support of $f_*^{-1}B$, $\mathcal{E}_s$ 
coincides with the support of $f_*^{-1}B$).

Moreover, if $\mathcal{D}$ is a set of klt Calabi--Yau 
varieties (resp., klt log Calabi--Yau pairs), then it is 
said to be {\it bounded modulo flops} (resp., {\it log 
	bounded modulo flops}) if it is (log) bounded in 
codimension one, and each fiber $\mathcal{Z}_{s}$ 
corresponding to $X$ in the definition is normal projective, 
and $K_{\mathcal{Z}_s}$ is $\bQ$-Cartier (resp., $K_{\mathcal{Z}_s}+f_*^{-1}B$ is $\bR$-Cartier). 

Note that if $\mathcal{D}$ is a set of klt log Calabi--Yau 
pairs which is log bounded modulo flops, and 
$(X, B)\in \mathcal{D}$ with a birational
map $f \colon \mathcal{Z}_s \dashrightarrow X$ that is an isomorphism in codimension one as in the definition, then $(\mathcal{Z}_s, f_*^{-1}B)$ is again a klt log 
Calabi--Yau pair by the Negativity Lemma. 
Moreover,  $(X, B)$ is $\epsilon$-lc if and only 
if $(\mathcal{Z}_s, f_*^{-1}B)$ is so. A similar statement 
holds for $\mathcal{D}$ a set of klt Calabi--Yau varieties.

Here the name ``modulo flops" comes from the fact that, if we assume that $X$ and $\mathcal{Z}_s$ are both $\bQ$-factorial, then they are connected by flops by running a $(K_X+B+\delta f_*H)$-MMP where $H$ is an ample divisor on $\mathcal{Z}_s$ and $\delta$ is a sufficiently small positive number  (cf. \cite{BCHM, flops}) .

\subsection{Volume}
Let $X$ be a $d$-dimensional projective variety  and $D$ 
a Cartier divisor on $X$. The {\it volume} of $D$ is the real number
\[
{\Vol}(X, D)=\limsup_{m\rightarrow \infty}\frac{h^0(X,\OO_X(mD))}{m^d/d!}.
\]
For more background on the volume, see \cite[2.2.C]{Positivity1}. 
By homogeneity and continuity of the volume, we 
can extend the definition to $\bR$-Cartier $\bR$-divisors. 
Moreover, if $D$ is a nef $\bR$-divisor, then $\Vol(X, D)=D^d$.
If $D$ is a $\bR$-divisor which is not $\bR$-Cartier, if a $\bQ$-factorialization of $X$, i.e., a birational morphism $\phi \colon Y\to X$ which is an isomorphism in codimension one and $Y$ is $\bQ$-factorial, exists then we define $\Vol(X, D):=\Vol(Y, \phi^{-1}_*D)$. 
Note that $\bQ$-factorializations always exist for varieties $X$ which admit an effective $\bR$-divisor $B$ such that $(X,B)$ is klt (cf. \cite[Corollary 1.4.3]{BCHM}). 

It is easy to see the following inequality for volumes 
by comparing global sections by exact sequences.

\begin{lem}[{\cite[Lemma 2.5]{Jiang6}, \cite[Lemma 4.2]{DCS}}]\label{lemma volume}
	Let $X$ be a projective normal variety, $D$ an $\bR$-Cartier
	$\bR$-divisor, and $S$ a base-point free normal Cartier prime divisor. 
	Then for any real number $q>0$,
	\[
	\Vol(X,D+qS)\leq \Vol(X, D)+q(\dim X) \Vol(S, D|_S+qS|_S).
	\]
\end{lem}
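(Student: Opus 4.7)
The plan is to prove the stated inequality by the standard trick of slicing with copies of $S$ and reading off the resulting cohomology bounds, then passing to the limit.

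First, I would reduce to the case of $\bQ$-divisors and rational $q$. Since the volume is continuous and homogeneous on the space of $\bR$-Cartier $\bR$-divisors (and since $\bQ$-factorializations exist by the assumption that klt pairs are around, so both sides are well-defined), I can replace $D$ by a sequence of $\bQ$-Cartier $\bQ$-divisors converging to it and $q$ by rational approximations. It will therefore suffice to assume $D$ is a $\bQ$-Cartier $\bQ$-divisor, $q\in\bQ_{>0}$, and to work with $m$ sufficiently divisible so that $mD$ and $mq$ are integral and $mD$ is Cartier.

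Next, for each integer $k$ with $1\le k\le mq$, the short exact sequence
\[
0\to\OO_X(mD+(k-1)S)\to\OO_X(mD+kS)\to\OO_S\bigl((mD+kS)|_S\bigr)\to 0
\]
gives, after telescoping,
\[
h^0\bigl(X,\OO_X(mD+mqS)\bigr)\le h^0\bigl(X,\OO_X(mD)\bigr)+\sum_{k=1}^{mq}h^0\bigl(S,\OO_S((mD+kS)|_S)\bigr).
\]
Since $S$ is base-point free, it is linearly equivalent to an effective divisor, so for each $1\le k\le mq$ the divisor $(mD+kS)|_S$ is contained in the effective divisor $(mD+mqS)|_S=m(D|_S+qS|_S)$. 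Hence
\[
h^0\bigl(S,(mD+kS)|_S\bigr)\le h^0\bigl(S,m(D|_S+qS|_S)\bigr),
\]
and therefore
\[
h^0\bigl(X,m(D+qS)\bigr)\le h^0\bigl(X,mD\bigr)+mq\cdot h^0\bigl(S,m(D|_S+qS|_S)\bigr).
\]

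Finally, dividing by $m^{d}/d!$ with $d=\dim X$ and taking $\limsup_{m\to\infty}$, the first term on the right yields $\Vol(X,D)$, while the second becomes
\[
q\cdot\frac{d!}{(d-1)!}\cdot\Vol(S,D|_S+qS|_S)=qd\cdot\Vol\bigl(S,D|_S+qS|_S\bigr),
\]
which is the desired inequality. The only real subtlety, and the step I would spend most care on, is the initial reduction from $\bR$-Cartier to $\bQ$-Cartier: one needs the restrictions $D|_S$ and $S|_S$ to vary continuously with the approximants, which is fine since $S$ is Cartier and normal, but it is the one point where one should check that the bound passes cleanly to the limit on both sides.
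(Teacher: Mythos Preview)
Your proof is correct and follows exactly the approach the paper indicates: the paper does not spell out a proof but simply says the inequality ``is easy to see\dots by comparing global sections by exact sequences,'' which is precisely your telescoping argument with the restriction sequence for $S$. One minor phrasing point: when you say ``$(mD+kS)|_S$ is contained in'' $(mD+mqS)|_S$, what you mean (and use) is that their difference $(mq-k)S|_S$ is linearly equivalent to an effective divisor, giving the injection on $H^0$; this is fine but could be stated more precisely.
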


\subsection{Length of extremal rays}
Recall the following result on the length of extremal rays due to Kawamata.
\begin{thm}[{\cite{Ka}}]\label{ext ray}
	Let $(X, B)$ be a klt pair. Then every $(K_X+B)$--negative extremal 
	ray $R$ is generated by the class of a rational curve $C$ such that 
	\[
	0<-(K_X+B)\cdot C\leq 2\dim X.
	\]
\end{thm}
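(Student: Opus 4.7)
The plan is to combine the Cone and Contraction Theorem for klt pairs with Mori's bend-and-break technique to produce a short rational curve generating $R$.

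First I would invoke the Cone and Contraction Theorem (via Kawamata--Shokurov base-point freeness) to obtain the extremal contraction $\phi_R \colon X \to Y$ associated to $R$. Every irreducible curve contracted by $\phi_R$ has numerical class a positive multiple of the generator of $R$, so it suffices to find one rational curve $C$ with $\phi_R(C) = \text{pt}$ satisfying the length bound. As a preparatory reduction I would pick a general contracted fiber $F$ of $\phi_R$ and, using that $(X, B)$ is klt, note that $F$ meets the smooth locus of $X$: after moving $B$ in its numerical class if necessary, a general fiber of $\phi_R$ avoids a codimension $\ge 2$ bad locus. This allows one to run the deformation theory of curves on the smooth locus while still comparing intersections with $-(K_X+B)$.

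Next I would produce a rational curve spanning $R$. The classical route is reduction modulo $p$: spread $X$, $B$, a suitable polarization $H$, and the fiber $F$ out over a finitely generated $\bZ$-algebra, specialize to a closed point of large residue characteristic, and apply Mori's Frobenius pull-back trick to a curve in $F$ to obtain a rational curve $C_p$ through a prescribed point with $-(K_X+B)\cdot C_p$ controlled. Taking a limit back to characteristic zero yields a rational curve $C \subset F$ whose class lies in $R$. (In this klt setting one can alternatively cite the proof of the Cone Theorem itself, which already produces rational curves generating each extremal ray.)

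Finally I would establish the bound $-(K_X+B) \cdot C \leq 2 \dim X$ by bend-and-break. If $C$ were a rational curve in $R$ with $-(K_X+B)\cdot C > 2\dim X$, pick two general points $p, q \in C$ in the smooth locus of $X$ and consider the scheme of morphisms $\PPP^1 \to X$ through $p$ and $q$ with image class proportional to $[C]$. The standard Hom-scheme dimension estimate gives
\[
\dim_{[C]} \mathrm{Hom}(\PPP^1, X; p, q) \geq -(K_X+B)\cdot C + \dim X - 2\dim X = -(K_X+B)\cdot C - \dim X,
\]
which is strictly positive under our assumption. A nontrivial deformation through two fixed points must degenerate the image, yielding a numerically equivalent reducible cycle; since $R$ is extremal, each component still lies in $R$, and iterating reduces to a rational curve of length $\leq 2\dim X$.

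The main obstacle is the presence of klt singularities: the Hom-scheme estimate requires working on a smooth variety with a Cartier divisor, whereas $-(K_X+B)$ is only $\bR$-Cartier on $X$. My plan to overcome this is to arrange $C$ to lie in the smooth locus (possible since klt singularities have codimension $\ge 2$ and fibers of $\phi_R$ can be chosen general), and to replace $-(K_X+B)$ by an $\bR$-Cartier approximation via $\bQ$-Cartier divisors whose intersection with $C$ converges to $-(K_X+B)\cdot C$; the bend-and-break inequality then passes to the limit. The klt hypothesis is precisely what guarantees that the dimension count yields the sharp constant $2\dim X$ and does not get degraded by discrepancy corrections.
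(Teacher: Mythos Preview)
The paper does not prove this theorem; it is stated as background and attributed to Kawamata \cite{Ka}. So there is no in-paper proof to compare against, and your sketch should be measured against Kawamata's original argument.

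Your overall shape (extremal contraction, then bend-and-break to bound the length) is right, but the central estimate is misstated. The deformation-theoretic lower bound for $\mathrm{Hom}(\bP^1, X; p, q)$ is $-K_X\cdot C - \dim X$, coming from $\chi(f^\ast T_X)$; the boundary $B$ does not appear. So your displayed inequality with $-(K_X+B)\cdot C$ is not what bend-and-break gives you, and the argument as written does not yield the bound $2\dim X$ for $-(K_X+B)\cdot C$. Relatedly, your plan to keep $C$ inside the smooth locus by choosing a general fiber is not robust: for small or birational contractions the contracted locus can sit entirely inside $\mathrm{Sing}(X)$, so you cannot simply ``move $B$'' or pick a general fiber to avoid singularities.

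Kawamata's actual route handles both issues at once: pass to a log resolution $\pi\colon Y\to X$, write $K_Y=\pi^\ast(K_X+B)+\sum a_iE_i$ with $a_i>-1$, and apply bend-and-break on the smooth variety $Y$ (or on a suitable subvariety of a fiber of $Y\to Z$) to produce a rational curve $\tilde{C}$ with $-K_Y\cdot\tilde{C}$ bounded. Pushing $\tilde{C}$ down and using $a_i>-1$ to control $\sum a_i(E_i\cdot\tilde{C})$ is what converts the $K_Y$-bound into the $2\dim X$ bound for $-(K_X+B)\cdot\pi_\ast\tilde{C}$; this is exactly where the klt hypothesis enters, not in a dimension count on $X$ itself. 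Your final paragraph gestures at this (``discrepancy corrections''), but the fix is not a limiting trick with $\bR$-Cartier approximations --- it is working upstairs on the resolution from the start.
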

However, as we need to deal with non-klt pairs in the applications, we will use the following  
generalization of this theorem for log pairs which was proved by Fujino, cf.~\cite{S19}.

\begin{thm}[{\cite[Theorem 1.1(5)]{Fujino}}]\label{ext ray2}
	Let $(X, B)$ be a log pair. Let $i \colon 
	\Nlc(X,B) \to X$ be the inclusion of the non-lc locus in $X$.
	Fix a $(K_X+B)$-negative extremal ray $R$. Assume that
	\[
	R\cap \overline{NE}(X)_{\Nlc(X,B)} = \{0\},
	\]
	where
	\[
	\overline{NE}(X)_{\Nlc(X,B)} = {\rm Im}(i_\ast \colon 
	\overline{NE}(\Nlc(X,B)) \to \overline{NE}(X)).
	\]
	Then $R$ is generated by a rational curve $C$ such that 
	\[
	0<-(K_X+B)\cdot C\leq 2\dim X.
	\]
\end{thm}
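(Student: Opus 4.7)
The plan is to reduce Fujino's statement to Kawamata's length bound (Theorem \ref{ext ray}) by passing to a well-chosen log resolution and using the hypothesis on $R$ to isolate a sub-lc pair on which the classical machinery applies.

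First, I would take a log resolution $f\colon Y \to X$ of $(X,B)$ and split the pullback as
\[
f^{\ast}(K_X+B) = K_Y + B^{+} - E,
\]
where $B^{+} = \sum_{a_i \geq -1} (-a_i) F_i$ carries the components with log discrepancy in $[0,+\infty)$ (so $(Y,B^{+})$ is sub-lc, and sub-klt after an arbitrarily small perturbation), while $E = \sum_{a_i<-1}(-1-a_i) F_i \geq 0$ is an effective divisor whose support is mapped by $f$ into $\Nlc(X,B)$.

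Next, I would use the hypothesis $R \cap \overline{NE}(X)_{\Nlc(X,B)} = \{0\}$ to lift $R$ to a $(K_Y+B^{+})$-negative extremal ray $\tilde R \subset \overline{NE}(Y)$ that is realized by curves not meeting $\Supp E$. For any such curve $\tilde C$ generating $\tilde R$, the projection formula yields
\[
(K_Y+B^{+})\cdot \tilde C = f^{\ast}(K_X+B)\cdot \tilde C = (K_X+B)\cdot f_{\ast}\tilde C < 0.
\]
Applying Theorem \ref{ext ray} to a small sub-klt perturbation of $(Y,B^{+})$ would then produce a rational curve $\tilde C$ generating $\tilde R$ with $-(K_Y+B^{+})\cdot \tilde C \leq 2\dim Y = 2\dim X$, and pushing forward, $C := f_{\ast}\tilde C$ is a rational curve in $R$ satisfying the required length bound.

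The main obstacle will be the second step: rigorously producing the lifted extremal ray $\tilde R$ and ensuring that a generating curve can be chosen to avoid $\Supp E$. The assumption $R\cap \overline{NE}(X)_{\Nlc(X,B)}=\{0\}$ is precisely what makes this possible, since curves contained in the non-lc locus are exactly those whose lifts could intersect $E$ non-trivially. In Fujino's framework this is handled via the theory of quasi-log structures and the vanishing theorems for non-lc ideal sheaves, which replace the classical Kawamata--Viehweg vanishing in the non-lc setting; the bend-and-break producing the bound $2\dim X$ then proceeds exactly as in Mori's and Kawamata's original arguments, since it only requires curves in $Y \setminus \Supp E$, where the pair $(Y,B^{+})$ is genuinely sub-klt.
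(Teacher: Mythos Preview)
The paper does not prove this statement: it is Theorem~\ref{ext ray2}, quoted verbatim from Fujino \cite[Theorem 1.1(5)]{Fujino} as a preliminary tool, with no proof supplied. So there is nothing in this paper to compare your proposal against.

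On the merits of your sketch itself: first, the decomposition you wrote is not quite right---check the coefficients on the components with $a_i<-1$; with your definitions one has $B^{+}-E \neq -\sum a_i F_i$. More substantively, the step you correctly flag as the ``main obstacle'' is not actually carried by the argument you give. The hypothesis $R\cap\overline{NE}(X)_{\Nlc(X,B)}=\{0\}$ says that no curve \emph{contained in} $\Nlc(X,B)$ spans $R$; it does not hand you a curve on $Y$ that is \emph{disjoint from} $\Supp E$ and generates an extremal ray upstairs. Passing to a log resolution typically destroys extremality, and curves spanning $R$ may well meet $\Nlc(X,B)$ without lying inside it, so their lifts can hit $\Supp E$. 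Fujino's actual proof does not reduce to the klt case on a resolution in this way: it develops the cone and contraction theorems directly for quasi-log pairs via vanishing theorems for non-lc ideal sheaves, and then runs a bend-and-break argument (after contraction of $R$) on $X$ itself, in the spirit of Kawamata's original proof but with the quasi-log machinery replacing Kawamata--Viehweg vanishing. Your last paragraph gestures at this, but the preceding reduction-to-a-resolution strategy is not how the argument goes.
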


\section{Birational boundedness of rationally 
	connected Calabi--Yau $3$-folds}\label{sec 3}

The goal of this section is to prove Theorems \ref{main} and \ref{main2}.
\subsection{Sketch of the proof}
The strategy of proof for Theorems \ref{main} and \ref{main2} originates from \cite{Jiang6}.
Using the minimal model program, it suffices to work with varieties of CY-type 
endowed with a terminal Mori fibration structure. The precise result, 
whose proof will be given in Section \ref{subsection MFS}, is the following. 

\begin{prop}[{cf. \cite[Proof of Theorem 2.3]{Jiang}}]\label{thm MFS}
	Fix a positive real number $\epsilon$ and a positive integer $d$.
	Every $d$-dimensional rationally connected variety $X$ of 
	$\epsilon$-CY type is birational to a $d$-dimensional rationally 
	connected variety $X'$ of $\epsilon$-CY type with a terminal Mori fibration structure. 
\end{prop}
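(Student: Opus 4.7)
The plan is to first replace $X$ by a $\bQ$-factorial terminal model of the pair, and then to produce a Mori fiber space by running an MMP on a slight perturbation of the boundary. Using \cite{BCHM}, I would construct a $\bQ$-factorial terminalization $\phi \colon Y \to X$, so that $Y$ is $\bQ$-factorial and terminal, and $\phi$ extracts precisely the divisors $E$ with $a(E,X,0) \le 0$. Define $B_Y$ on $Y$ by the crepant equation $K_Y + B_Y = \phi^*(K_X + B)$. Since $B \ge 0$ forces $a(E,X,B) \le a(E,X,0) \le 0$ for every $\phi$-exceptional $E$, the divisor $B_Y$ is effective, and $(Y, B_Y)$ is an $\epsilon$-klt log Calabi--Yau pair crepant to $(X, B)$. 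Rational connectedness is a birational invariant, so $Y$ is rationally connected as well.

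The key intermediate step is to show $B_Y \neq 0$. If $B \neq 0$, then the strict transform $\phi^{-1}_{*}B$ is already nonzero because the image of the $\phi$-exceptional locus in $X$ has codimension $\ge 2$. If $B = 0$, I argue by contradiction: assume $B_Y = 0$. Then $K_Y \equiv 0$ and, since $Y$ is terminal, on any smooth resolution $\mu \colon W \to Y$ one has $K_W = \mu^* K_Y + \sum_i d_i E_i$ with all discrepancies $d_i > 0$, hence
\[
K_W \;\equiv\; \sum_i d_i E_i \;\geq\; 0,
\]
so $K_W$ is $\bQ$-effective and in particular pseudo-effective. But $W$ is smooth and rationally connected, hence uniruled, so the Boucksom--Demailly--P\u{a}un--Peternell theorem forces $K_W$ not to be pseudo-effective: a contradiction. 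So $B_Y > 0$ in all cases.

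Given $B_Y > 0$, fix $\delta \in (0,1)$ small. The pair $(Y, (1-\delta)B_Y)$ is still $\epsilon$-klt and $\bQ$-factorial, and $K_Y + (1-\delta)B_Y \equiv -\delta B_Y$ is anti-effective and nonzero, hence not pseudo-effective. By \cite[Corollary~1.3.3]{BCHM}, a $(K_Y + (1-\delta)B_Y)$-MMP terminates with a Mori fiber space $f \colon X' \to Z$ birational to $Y$. The crucial observation is that every step of this MMP is simultaneously $K$-negative: for a curve $C$ spanning the contracted extremal ray, $K + B_Y \equiv 0$ yields $K \cdot C = -B_Y \cdot C$, while $(K + (1-\delta)B_Y) \cdot C = -\delta(B_Y \cdot C) < 0$ forces $B_Y \cdot C > 0$, hence $K \cdot C < 0$. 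Thus the MMP is also a $K$-MMP starting from a terminal variety, so terminality of the underlying variety is preserved by classical MMP theory. Combined with preservation of $\bQ$-factoriality, of $\epsilon$-kltness of the perturbed pair, and of the numerical triviality $K + B \equiv 0$, the output $(X', B_{X'})$ is an $\epsilon$-klt log Calabi--Yau pair with $X'$ rationally connected, $\bQ$-factorial, and terminal, and $f$ realizes a terminal Mori fibration structure on $X'$.

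The main obstacle is the second step: ruling out $B_Y = 0$. If this degenerate case could occur, the perturbation would produce no MMP step at all and the whole argument would stall. The only way I see to exclude it leaves the purely birational/MMP framework and invokes the transcendental BDPP characterization of pseudo-effectivity via uniruledness on a smooth resolution. The remaining pieces --- BCHM-style terminalization, Mori fiber space output for a non-pseudo-effective pair, and the two-for-one observation that $(K + (1-\delta)B)$-MMP coincides with a $K$-MMP when $K + B \equiv 0$ --- are all standard.
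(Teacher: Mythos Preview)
Your approach is essentially the same as the paper's, though you take a more circuitous route and introduce one small technical gap. The paper terminalizes the \emph{pair} $(X,B)$ rather than the variety $X$; this matters because $K_X$ need not be $\bQ$-Cartier (only $K_X+B$ is $\bR$-Cartier), so your condition ``$a(E,X,0)\le 0$'' and the associated extraction are not well-defined as stated. Terminalizing $(X,B)$ instead, or first passing to a small $\bQ$-factorialization, fixes this immediately.

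After terminalization, the paper simply notes that a rationally connected $\bQ$-factorial terminal variety cannot have pseudo-effective canonical class --- precisely the BDPP-style argument you spell out under the heading ``ruling out $B_Y=0$'' --- and then runs a $K_{X_1}$-MMP directly. Your perturbed $(K_Y+(1-\delta)B_Y)$-MMP is literally the same MMP, since $K_Y+B_Y\equiv 0$ gives $K_Y+(1-\delta)B_Y\equiv \delta K_Y$, so the extremal rays and contractions coincide; the detour through $B_Y>0$ and the two-for-one observation are therefore unnecessary once you know $K_Y$ is not pseudo-effective. In short: correct modulo the $\bQ$-Cartier point, same underlying idea, but the paper's formulation is more direct.
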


Now let $X$ be a $3$-fold of $\epsilon$-CY type with a terminal Mori 
fibration $f\colon X\to Z$. If $-K_X$ is big, then $X$ is of Fano type, 
and the (birational) boundedness follows from the BAB conjecture 
in dimension $3$, see \cite[Corollary 1.8]{Jiang6} or \cite[Corollary 1.2]{Bir16b}. 
Thus, we only need to consider the case when $-K_X$ is not big. 
Since $-K_X$ is ample over $Z$, this implies that $\dim Z>0$. 
In this case, we prove the following theorem.

\begin{thm}\label{NV}
	Fix a positive real number $\epsilon$. 
	Then there exist positive integers $n=n({\epsilon}), c=c(\epsilon)$, and 
	$v=v({\epsilon})$ depending only on $\epsilon$, with the following property:
	
	Assume $X$ is a rationally connected $3$-fold of $\epsilon$-CY type
	endowed with a terminal Mori fibration $f\colon X\to Z$ such that $-K_X$ is not big.
	\begin{enumerate}
		\item If $\dim Z=1$ (i.e. $Z=\bP^1$), take a general fiber $F$ of $f$, then
		
		\begin{enumerate}[label=(1.\arabic*)]
			\item   $-K_X+n F$ is ample,
			\item  $|-3K_X+8n F|$ defines a birational map, and
			\item $(-K_X+n F)^3\leq v$.
		\end{enumerate}
		
		\item If $\dim Z=2$, then there exists a very ample divisor $H$ on $Z$ such that
		
		\begin{enumerate}[label=(2.\arabic*)]
			\item $H^2 \leq c$,
			\item  $-K_X+n f^*H$ is ample,
			\item  $|-3K_X+8n f^*H|$ defines a birational map, and
			\item $(-K_X+n  f^*H)^3\leq v$.
		\end{enumerate}
	\end{enumerate}
\end{thm}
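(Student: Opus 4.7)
The plan is to handle cases (1) and (2) in parallel, progressively bounding the base $Z$, the coefficient $n$, and then the volume and the birationality of the claimed linear system. First, for bounding $Z$: in case (1), because $X$ is rationally connected and $f$ is a contraction onto a smooth curve, one has $Z=\PPP^1$. In case (2), I would pick an $\epsilon$-klt boundary $B$ witnessing $\epsilon$-CY type and apply the canonical bundle formula to $f$ to produce on $Z$ a generalized log Calabi--Yau pair $(Z, B_Z + M_Z)$ that is $\epsilon'$-klt in the generalized sense, for some $\epsilon'=\epsilon'(\epsilon)>0$. Since $X$, and hence $Z$, is rationally connected, the two-dimensional case of Conjecture \ref{conj3}, known by Alexeev's work, yields boundedness of $Z$, and so one can select a very ample Cartier divisor $H$ with $H^2\le c=c(\epsilon)$.

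Next, I would bound $n$ by setting $\Lambda:=F$ in case (1) and $\Lambda:=f^*H$ in case (2), and then invoking Theorem \ref{ext ray}: every $K_X$-negative extremal ray of $\overline{NE}(X)$ admits a rational generator $C$ with $0<-K_X\cdot C\le 6$, and if such a ray is not contracted by $f$ then $\Lambda\cdot C=H\cdot f_*C\ge 1$ by very ampleness of $H$ (or automatically in case (1)). On the $K_X$-nonnegative part of $\overline{NE}(X)$, the divisor $\Lambda$ is strictly positive away from the fiber class, on which $-K_X$ itself is positive, so a compactness argument combined with the boundedness of Step 1 gives a uniform upper bound on $(K_X\cdot\alpha)/(\Lambda\cdot\alpha)$. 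By Kleiman's criterion, this produces an explicit $n=n(\epsilon)$ making $-K_X+n\Lambda$ ample.

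To prove birationality of $|-3K_X+8n\Lambda|$, I would rewrite
\[
-3K_X+8n\Lambda = K_X + 4\bigl((-K_X+n\Lambda)+n\Lambda\bigr),
\]
so that the system is of the shape $|K_X+A|$ with $A$ ample. The plan is the standard cut-and-lift: in case (1) restrict to a general fiber $F_0$, which is an $\epsilon$-klt del Pezzo surface, on which the system becomes $|-3K_{F_0}|$; this is birational by two-dimensional BAB (Alexeev) combined with effective birationality on $\epsilon$-klt log del Pezzo surfaces of bounded index (cf.\ \cite{Jiang6}). In case (2) I would first cut by $f^{-1}(H_0)$ for a general $H_0\in|H|$, reducing to a conic bundle over the bounded curve $H_0$, and then restrict further to a general fiber. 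The surplus $8n\Lambda$ provides enough pullback sections from $Z$ to separate distinct fibers, and Kawamata--Viehweg vanishing on klt pairs lifts birationality from the restricted system to the total space.

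Finally, for the volume bound, I would expand
\[
(-K_X+n\Lambda)^3=(-K_X)^3+3n(-K_X)^2\cdot\Lambda+3n^2(-K_X)\cdot\Lambda^2+n^3\Lambda^3.
\]
In case (1), $F^2=0$ eliminates the last two summands, $(-K_X)^2\cdot F=(-K_{F_0})^2$ is bounded by BAB in dimension two, and $(-K_X)^3$ is controlled by the non-bigness of $-K_X$ together with Lemma \ref{lemma volume}. In case (2), $(f^*H)^3=0$, and the remaining mixed intersections reduce to bounded numerical invariants on either $Z$ (through $H^2\le c$) or on the bounded surface $f^{-1}(H_0)$, yielding $v=v(\epsilon)$. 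The main obstacle will be the birationality step: orchestrating a uniform cut-and-lift across both $\dim Z=1$ and $\dim Z=2$, while keeping the constants $3$ and $8n$ explicit, requires a careful interplay between effective birationality on bounded log del Pezzo surfaces, Kawamata--Viehweg vanishing, and the separation of fibers via pullback sections from the bounded base.
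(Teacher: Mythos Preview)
The principal gap is in your argument for the ampleness of $-K_X+n\Lambda$ (items (1.1) and (2.2)). Your compactness argument on the $K_X$-nonnegative part of $\overline{NE}(X)$ only produces a bound on $\sup_\alpha (K_X\cdot\alpha)/(\Lambda\cdot\alpha)$ that depends on the individual variety $X$, not just on $\epsilon$; the boundedness of $Z$ from Step~1 says nothing about the shape of $\overline{NE}(X)$ relative to $K_X$ and $\Lambda$. The paper's own Remark (following Lemma~\ref{lemma ample}) makes exactly this point: the naive route through Theorem~\ref{ext ray} would require Conjecture~\ref{conj LCT higher} in dimension three---a uniform $t$ with $(X,(1+t)B)$ klt, so that the $(K_X+(1+t)B)$-negative rays (equivalently, the $K_X$-positive rays, since $K_X+(1+t)B\equiv -tK_X$) have bounded length---but that conjecture is open.

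The paper circumvents this by working on the surface $G=\Lambda$. Theorem~\ref{conj LCT}, a consequence of the surface boundedness result Theorem~\ref{thm surface}, yields a uniform $\lambda=\lambda(\epsilon)>0$ such that $(G,(1+\lambda)B|_G)$ is klt. Inversion of adjunction then forces $f(\Nlc(X,(1+\lambda)B))$ to be zero-dimensional, so every class in $\overline{NE}(X)_{\Nlc(X,(1+\lambda)B)}$ has $G\cdot C=0$. One now applies Fujino's length bound for \emph{non-lc} pairs (Theorem~\ref{ext ray2}) to any $(K_X+(1+\lambda)B)$-negative extremal ray $R$ with $G\cdot R>0$, obtaining $-(K_X+(1+\lambda)B)\cdot C\leq 6$ and hence $(-K_X+\frac{7}{\lambda}G)\cdot R>0$; combined with the easy cases $G\cdot R=0$ and $(K_X+(1+\lambda)B)\cdot R\geq 0$, this gives $n=\lceil 7/\lambda\rceil$. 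This passage through the surface klt-threshold result and Theorem~\ref{ext ray2} is the key technical input your proposal is missing.

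A secondary issue concerns your volume bound. Expanding $(-K_X+n\Lambda)^3$ into intersection numbers is not directly useful: $(-K_X)^3$ is a topological self-intersection, not a volume, and can be arbitrarily negative when $-K_X$ is not nef; non-bigness only tells you $\Vol(X,-K_X)=0$. The paper (Lemma~\ref{lem volume}) works entirely with volumes via Lemma~\ref{lemma volume}:
\[
0=\Vol(X,-K_X)\geq \Vol(X,-K_X+nG)-3n\,\Vol(G,-K_X|_G+nG|_G),
\]
and then bounds the surface volume on $G$ explicitly (by $9$ in case~(1), by $8+4(n+1)c$ in case~(2)). Your reference to Lemma~\ref{lemma volume} is on the right track, but you should discard the cubic expansion.

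Your birationality sketch is essentially the paper's argument (Lemma~\ref{lemma birationality}); note only that the general fibre in case~(1) is a \emph{smooth} del Pezzo surface since $X$ is terminal, so $|-3K_{F_0}|$ is already very ample and no appeal to $\epsilon$-klt effective birationality is needed.
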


The proof of Theorem \ref{NV} will be given in 
Section \ref{section proof 2}, while the proof of 
Theorems \ref{main} and \ref{main2} will be 
given in Section \ref{subsec mains}.

\subsection{Proof of Proposition \ref{thm MFS}}\label{section proof 1}

In this subsection, for the reader's convenience, 
we recall the proof of Proposition \ref{thm MFS}.

\begin{lem}\label{RC not pseff}
	If $X$ is rationally connected and with at worst canonical singularities, then $K_X$ is not pseudo-effective.
\end{lem}
\begin{proof}
	Take a resolution $\phi: Y \to X$, $Y$ is again rationally connected, % by \cite{HM07}, 
	hence $K_Y$ is not pseudo-effective. Since $X$ is canonical, $K_Y\geq \phi^*K_X$, and therefore $K_X$ is not pseudo-effective.
\end{proof}

\begin{proof}[Proof of Proposition \ref{thm MFS}]\label{subsection MFS}
	Fix a positive real number $\epsilon$ and a positive integer $d$. 
	Let  $X$ be a rationally connected variety of $\epsilon$-CY type 
	of dimension $d$.
	By \cite[Corollary 1.4.3]{BCHM}, taking a terminalization of $(X, B)$, 
	we have a birational morphism $\pi\colon X_1 \rightarrow X$ where
	$K_{X_1}+B_1=\pi^*(K_{X}+B), \;B_1> 0$ is an effective $\bR$-divisor, 
	and $X_1$ is $\bQ$-factorial terminal.
	Here $K_{X_1}+B_1\equiv 0$ and $(X_1,B_1)$ is  
	$\epsilon$-klt; moreover, $X_1$ is again 
	rationally connected. In particular, $K_{X_1}$ 
	is not pseudo-effective by Lemma \ref{RC not pseff} since $X_1$ is terminal.
	
	We can run a $K_{X_1}$-MMP with scaling of an ample 
	divisor on $X_1$, %we get a sequence of normal projective varieties:
	%\[
	%X_1\dashrightarrow X_2 \dashrightarrow X_3\dashrightarrow 
	%\cdots\dashrightarrow X'\rightarrow T.
	%\]
	%This sequence 
	which terminates with a Mori fiber space $X'\rightarrow T$, 
	cf. \cite[Corollary 1.3.3]{BCHM}. 
	As we run a $K_{X_1}$-MMP, $X'$ is again $\bQ$-factorial terminal 
	and rationally connected.
	By the Negativity Lemma, $K_{X'}+B'\equiv 0$ and 
	$(X',B')$ is  $\epsilon$-klt where $B'$ is the strict transform 
	of $B_1$ on $X'$. Now $X'$ is a $d$-dimensional rationally 
	connected variety of $\epsilon$-CY type with a terminal Mori 
	fibration structure by construction, which is birational to $X$.
	This concludes the proof.
\end{proof}

\subsection{Proof of Theorem \ref{NV}}\label{section proof 2}
In this subsection, we prove Theorem \ref{NV}. This will follow directly 
from Lemmas \ref{lemma ample}, \ref{lemma birationality}, 
and \ref{lem volume} below.

\subsubsection{Setting}\label{setting}
Fix a positive real number $\epsilon$. Let $X$ be a rationally connected 
$3$-fold of $\epsilon$-CY type with a terminal Mori 
fibration $f \colon X\to Z$ such that $-K_X$ 
is not big and $\dim Z>0$. Suppose $(X, B)$ is an 
$\epsilon$-klt log Calabi--Yau pair. We define a base-point 
free divisor $G$ on $X$, coming from the boundedness 
of the base $Z$, in the following way.

When $\dim Z=1$, then $Z=\bP^1$. 
In this case, $G$ is defined to be a general fiber of $f$, 
which is a smooth del Pezzo surface since $X$ is terminal. 
$(G, B|_G)$ is an  $\epsilon$-klt log Calabi--Yau pair by 
adjunction.

If $\dim Z=2$, then the collection of such $Z$ forms a bounded family.
In fact, since $X$ is of $\epsilon$-CY type, there exists an effective 
$\bR$-divisor $\Delta$ such that $(Z, \Delta)$ is $\delta$-klt, 
$K_Z+\Delta\sim_\bR 0$, see \cite[Corollary 1.7]{Birkar}.
Here $\delta=\delta(\epsilon)$ is a positive number depending only on $\epsilon$. 
Hence $Z$ is rationally connected and of $\delta$-CY type. 
The boundedness then follows from the solution to the BAB Conjecture 
in dimension  $2$ (see \cite[Theorem 6.9]{AK2} or \cite[Lemma 1.4]{AM})
This implies that there is a positive integer $c=c(\epsilon)$ 
depending only on $\epsilon$ and we can find a general 
very ample divisor $H$ on $Z$ satisfying $H^2\leq c$. 
We take $G=f^*H$, then $G$ is a conic bundle over the curve $H$ 
(i.e. $-K_G$ is relatively ample over $H$). 
Note that $H$ and $G$ are smooth since $H$ is general and $X$ is terminal.
Also $(G, B|_G)$ is $\epsilon$-klt and $-(K_G+B|_G)+G|_G\sim_\bR 0$ by adjunction. 
Moreover, $G|_G=f|_G^*(H|_H)\equiv(H^2)F$ where $F$ is a general fiber of $f|_G$.
Finally, since $\rho(X/Z)=1$, $B^v\sim_{\bR, f} 0$, where $B^v$ is the 
$f$-vertical part of $B$, and hence $B^v|_G\sim_{\bR,f|_G} 0$.

\subsubsection{A boundedness theorem on surfaces}
We recall the following boundedness theorem for surfaces from \cite{Jiang6}. 
The ideas behind its proof are inspired by the solution to the BAB Conjecture 
in dimension two given by Alexeev--Mori \cite{AM}.

\begin{thm}[{\cite[Theorem 5.1]{Jiang6}}]\label{thm surface}
	Fix a positive integer $m$ and a positive real number $\epsilon$. 
	Then there exists a number $\lambda'=\lambda'(m,\epsilon)>0$ 
	depending only  on $m$ and $\epsilon$ satisfying the following property:
	
	Assume that $T$ is a projective smooth surface and $B=\sum_ib_iB^i$ 
	an effective $\bR$-divisor on $T$ where each $B^i$ is a prime divisor such that
	\begin{enumerate}
		\item $(T, B)$ is $\epsilon$-klt, but $(T, (1+t)B)$ is not klt for some $t>0$,
		\item $K_T+B\equiv N-A$ where $A$ is an ample $\bR$-divisor 
		and $N$ is a nef $\bR$-divisor on $T$,
		\item  $\sum_ib_i\leq m$,
		\item $B^2\leq m$, $(B\cdot N)\leq m$.
	\end{enumerate}
	Then $t>\lambda'$.
\end{thm}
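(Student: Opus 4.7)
\medskip

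\noindent\emph{Proof plan.} I would argue by contradiction: assume a sequence of pairs $(T_n, B_n)$ satisfies (1)--(4) with thresholds $t_n \searrow 0$. Replace $t$ by the minimum positive value at which $(T,(1+t)B)$ fails to be klt, so that $(T,(1+t)B)$ is lc but not klt; a tie-breaking perturbation (enabled by the ampleness of $A$) allows one to assume this pair has a unique lc place $E$, extracted on some log resolution $\pi\colon Y\to T$. Comparing log discrepancies, $a(E,T,B)>\epsilon$ together with $a(E,T,(1+t)B)=0$ force
\[
\mult_E(\pi^*B) \;>\; \frac{\epsilon}{t},
\]
which blows up as $t\to 0$; the remaining task is therefore to bound $\mult_E(\pi^*B)$ uniformly from above in terms of $m$ and $\epsilon$, which will yield a lower bound on $t$.

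\medskip

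Next, I would split on whether $E$ is a divisor on $T$ or $\pi$-exceptional. If $E=B^i$ for some $i$, then $1-b_i>\epsilon$ together with $1-(1+t)b_i\le 0$ gives $t\ge \epsilon/(1-\epsilon)$ at once; so the essential case is $E$ being $\pi$-exceptional with center a single closed point $p\in T$. In this case $E\cong\bP^1$ (since $T$ is smooth, any $\pi$-exceptional divisor is a smooth rational curve), and adjunction along $E$ together with the vanishing $\pi^*D\cdot E=0$ for any $\bR$-Cartier $D$ on $T$ yields $\deg(K_E+\mathrm{Diff}_E)=0$. Hence $\mathrm{Diff}_E$ is an effective divisor of degree $2$ on $\bP^1$, whose coefficients record the local contributions of $\pi^{-1}_*B$ and of the other exceptional divisors meeting $E$.

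\medskip

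The main obstacle is translating (2)--(4) into a uniform upper bound on $\mult_E(\pi^*B)$ in this exceptional case. My strategy is to exploit the ampleness of $A = -(K_T+B)+N$: via a Riemann--Roch/Seshadri estimate on the smooth surface $T$, one constructs an effective $\bQ$-divisor $D\sim_\bQ \lambda A$ with $\mult_p D$ arbitrarily large, for $\lambda$ controlled through the intersection bounds in (2)--(4). Combining this with the adjunction identity on $E$--which caps the contribution of $\pi^{-1}_*B$ to $\mathrm{Diff}_E$--and with the intersection-theoretic consequences of $B^2\le m$, $B\cdot N\le m$, and $\sum b_i\le m$ on the negative-definite exceptional configuration of $Y$, one should obtain a uniform upper bound on $\mult_E(\pi^*B)$ depending only on $m$ and $\epsilon$. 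The delicate point is that the coefficients $b_i$ need not lie in a DCC set, so the classical ACC for log canonical thresholds is unavailable; hypotheses (3) and (4) are the substitute. The bookkeeping becomes technically involved when $E$ is a deep valuation over $p$ extracted by many successive blowups, since the exceptional curves on $Y$ can then have arbitrarily negative self-intersections.
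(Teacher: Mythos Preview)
The paper does not give its own proof of this theorem: immediately after the statement it writes ``For the proof, we refer to \cite[Theorem~5.1]{Jiang6}.'' It also remarks that the ideas behind the proof are inspired by the Alexeev--Mori approach to BAB in dimension two. So there is nothing in the paper to compare your argument against beyond that pointer.

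As for your outline itself: the opening moves are standard and correct---reducing to the minimal $t$ so that $(T,(1+t)B)$ is lc not klt, extracting a single lc place $E$, and observing that $\mult_E(\pi^\ast B)>\epsilon/t$---and the divisorial case is indeed trivial. But what you present for the exceptional case is a strategy rather than a proof. You say one ``should obtain'' a uniform bound on $\mult_E(\pi^\ast B)$ by combining a Seshadri-type construction, adjunction on $E$, and the intersection bounds (3)--(4), and you acknowledge that the bookkeeping is ``technically involved'' when $E$ sits deep in a tower of blowups. That is precisely the content of the theorem, and none of it is actually carried out here: you have not explained how (3) and (4) control the combinatorics of the exceptional tree, nor why the Seshadri construction interacts usefully with $\mult_E$ for a deep valuation, nor how the condition $\sum_i b_i\le m$ enters (it is used in the Alexeev--Mori argument to control the number of components passing through the center). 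As written this is a plausible sketch of where the difficulty lies, but it is not a proof and would not be accepted as one; to complete it you would essentially have to reproduce the argument in \cite{Jiang6}.
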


For the proof, we refer to {\cite[Theorem 5.1]{Jiang6}}. Note that {\cite[Theorem 5.1]{Jiang6}} only treats $\bQ$-divisors, but the same proof applies to $\bR$-divisors.
By applying Theorem \ref{thm surface} to our situation, 
we can show the following theorem, which is a simple 
modification of \cite[Theorem 1.7]{Jiang6}.

\begin{thm}[{cf. \cite[Theorem 1.7]{Jiang6}}]\label{conj LCT}
	Fix a positive real number $\epsilon$. 
	Then there exists a number  $\lambda=\lambda(\epsilon)>0$ 
	depending only on $\epsilon$, satisfying the following property:
	
	\begin{enumerate}
		\item If $(G, B)$ is an $\epsilon$-klt log Calabi--Yau pair 
		and $G$ is a smooth del Pezzo surface, then 
		$(G, (1+t)B)$ is klt for $0<t\leq \lambda$.
		
		\item If $f\colon G\to H$ is a conic bundle from a smooth surface 
		$G$ to a smooth curve, $(G, B)$ is an $\epsilon$-klt pair, 
		$-(K_G+B)+k F\equiv 0$ for some integer $k\leq c$, and 
		$B^{v}\sim_{\bR,f} 0$, then $(G, (1+t)B)$ is klt for $0<t\leq \lambda$. 
		Here $F$ is a general fiber of $f$, $B^v$ is the $f$-vertical part of $B$, 
		and $c$ is the number depending only on $\epsilon$ defined in 
		Section \ref{setting}.
	\end{enumerate}
\end{thm}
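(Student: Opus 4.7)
The plan is to reduce both parts to the surface boundedness result Theorem~\ref{thm surface}: I will verify its hypotheses with a constant $m=m(\epsilon)$ depending only on $\epsilon$ (noting $c=c(\epsilon)$) and then take $\lambda:=\lambda'(m,\epsilon)$.

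For Part~(1), I would choose $N=0$ and $A=-K_G$, which is ample since $G$ is a smooth del Pezzo surface, so that $K_G+B\equiv 0=N-A$. Because $-K_G$ is an ample Cartier divisor, $-K_G\cdot B^i\geq 1$ for every prime component of $B$, whence $\sum_i b_i\leq\sum_i b_i(-K_G\cdot B^i)=(-K_G)\cdot B=K_G^2\leq 9$. Likewise $B^2=K_G^2\leq 9$ and $B\cdot N=0$, so the hypotheses of Theorem~\ref{thm surface} hold with $m=9$, yielding $\lambda_1:=\lambda'(9,\epsilon)$.

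For Part~(2), where $K_G+B\equiv kF$ is merely nef, I would first apply the canonical bundle formula to $f\colon G\to H$ to get $K_H+B_H+M_H\equiv kp$ on the curve $H$. Since $\deg B_H=\alpha$ (where $B^v\equiv\alpha F$) and $\deg M_H\geq 0$, this forces $g(H)\leq(c+2)/2$ and $\alpha\leq c+2$. Decomposing $B=B^h+B^v$ and using $F\cdot B=2$ with $F\cdot B^{h,i}\geq 1$ for each horizontal prime component, we get $\sum b_i^h\leq 2$ and $\sum b_i^v\leq 2\alpha\leq 2(c+2)$. Moreover, from $K_G^2=8(1-g(H))-s\leq 8$ (with $s\geq 0$ the number of singular fibers of $f$), it follows that $B^2=K_G^2+4k\leq 8+4c$. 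To recast $K_G+B\equiv kF$ as $N-A$ with $A$ ample and $N$ nef, I would pick an ample $\bR$-divisor $A$ on $G$ with $B\cdot A$ uniformly bounded in terms of $\epsilon$ and set $N:=kF+A$, so that $B\cdot N=2k+B\cdot A$ is bounded. Applying Theorem~\ref{thm surface} then provides $\lambda_2:=\lambda'(m,\epsilon)$, and one takes $\lambda:=\min(\lambda_1,\lambda_2)$.

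The main obstacle, I expect, is the last step of Part~(2): producing a single ample divisor $A$ on $G$ with $B\cdot A$ uniformly bounded in terms of $\epsilon$ alone, independent of $G$. Essentially this amounts to bounding the number $s$ of singular fibers of the conic bundle from above by a function of $\epsilon$ and $c$; such a bound should follow from the $\epsilon$-klt condition at each singular-fiber node combined with the intersection-theoretic constraints on the horizontal part of $B$, in the spirit of the Alexeev--Mori arguments underlying Theorem~\ref{thm surface}.
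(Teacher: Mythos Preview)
Your overall strategy---verify the hypotheses of Theorem~\ref{thm surface} with a uniform $m=m(\epsilon)$ and take $\lambda=\lambda'(m,\epsilon)$---is exactly the paper's approach. The genuine gap is precisely the step you single out as the ``main obstacle'' in Part~(2), and it dissolves once you read Theorem~\ref{thm surface} more carefully.

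The output $\lambda'$ of Theorem~\ref{thm surface} depends only on $m$ and $\epsilon$, \emph{not} on the particular ample divisor $A$. Hence $A$ may be chosen separately for each pair $(G,B)$ and may be taken arbitrarily small: pick any ample $\bQ$-divisor $A_0$ on $G$ and set $A=\eta A_0$ with $\eta>0$ small enough that $B\cdot A\leq c$. With $N:=kF+A$ one gets $B\cdot N=2k+B\cdot A\leq 3c\leq 4c+8$, and Theorem~\ref{thm surface} applies with $m=4c+8$. No bound on the number of singular fibers is needed, and no further Alexeev--Mori style argument is required. This is exactly what the paper does.

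Two smaller points. In Part~(1), your choice $N=0$, $A=-K_G$ gives $N-A\equiv K_G\not\equiv 0$; you need $N-A\equiv 0\equiv K_G+B$, so take $N=A=-K_G$ as the paper does (then $B\cdot N=(-K_G)^2\leq 9$, still within $m=9$). In Part~(2), the equality $\deg B_H=\alpha$ is not correct in general: the discriminant also picks up contributions from singular fibers and from the horizontal part of $B$. One does have $\deg B_H\geq\alpha$ (since $\lct(G,B;f^\ast p_j)\leq 1-c_j$), which suffices for the bound $\alpha\leq c+2$ you want; the paper bypasses the canonical bundle formula entirely and bounds $\sum_j c_j\leq k+3$ by a direct connectedness argument (Koll\'ar \cite[Theorem~4.37]{Kol13}).
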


\begin{proof}
	(1) As $G$ is a del Pezzo surface, it follows that $-K_G$ is ample, 
	$-3K_G$ is very ample, and $(-K_G)^2\leq 9$.
	Write $B=\sum_ib_iB^i$, then
	\begin{align*}
	\sum_i b_i\leq{}& B\cdot (-K_G)= (-K_G)^2\leq 9;\\
	B^2={}& (-K_G)^2\leq 9.
	%\mult_Q B\leq{}& B\cdot (-3K_G)\leq 3(-K_G)^2\leq 27.
	\end{align*}
	As $(G, B)$ is $\epsilon$-klt, we can apply Theorem \ref{thm surface} 
	for $A=N=-K_G$ and obtain that $(G, (1+t)B)$ is klt for all $0<t\leq \lambda'(9, \epsilon)$.
	
	(2) Suppose that $f:G\to H$ is a conic bundle from a 
	smooth surface $G$ to a smooth curve, $(G, B)$ is an 
	$\epsilon$-klt pair, $-(K_G+B)+k F\equiv 0$ for some 
	integer $k\leq c$, and $B^{v}\sim_{\bR,f} 0$. 
	The assumption $B^{v}\sim_{\bR,f} 0$ implies that 
	we may write $B=\sum_ib_iB^i+\sum_jc_jF^j$, where 
	$B^i$ is a curve not contained in a fiber of $f$ for all $i$, 
	and $F^j$ is a fiber of $f$ for any $j$. This condition is 
	crucial in the following claim. Note that each $F^j$ is reduced 
	and contains at most 2 irreducible components since 
	$f$ is a conic bundle.
	Moreover, recall that $B\cdot F= (-K_G)\cdot F=2$ and 
	$(-K_G)^2\leq 8$ for the conic bundle $G$.
	
	\begin{claim}\label{claim bi}
		The sum of coefficients of $B$ is bounded from above by 
		\[
		\sum_ib_i+\sum_j 2c_j\leq 8+2k\leq 8+2c.
		\]
	\end{claim}
	\begin{proof}[Proof of Claim \ref{claim bi}]
		First of all, we have
		\[
		\sum_ib_i\leq \sum_ib_i(B^i\cdot F)=(B\cdot F)= 2.
		\]
		Hence,
		it suffices to show that $\sum_jc_j\leq  3+k$. 
		Assume, to the contrary,  that $w=\sum_jc_j>3+k$. 
		Then, for any choice of three sufficiently general fibers 
		$F_1, \; F_2, \; F_3$ of $f$, consider the pair
		\[
		K_G+\sum_ib_iB^i+ \Big(1-\frac{3+k}{w}\Big)\sum_jc_jF^j+F_1+F_2+F_3\sim_\bR 0.
		\]
		Applying \cite[Theorem 4.37]{Kol13} to $X=G$, $Z$ a point, and $D=F_1+F_2+F_3$, 
		we conclude that $D$ has 2 connected components, which is obviously absurd.
	\end{proof}
	
	Moreover, we have
	\begin{align*}
	B^2= {}&(kF-K_G)^2=4k+(-K_G)^2\leq 4c+8;\\
	(B\cdot kF)= {}& (-K_G)\cdot kF =2k\leq 2c.
	\end{align*}
	
	Applying Theorem \ref{thm surface} for $N=kF+A$, where $A$ is a sufficiently small ample 
	$\bQ$-divisor such that $(A\cdot B)\leq c$, and fixing $m=4c+8$, we obtain that 
	$(G, (1+t)B)$ is klt for all $0<t\leq \lambda'(4c+8, \epsilon)$.
\end{proof}
We propose the following conjecture generalizing 
Theorem \ref{conj LCT} to higher dimension. 

\begin{conj}[{cf. \cite[Conjecture 1.13]{Jiang6}}]\label{conj LCT higher}
	Fix a positive real number $\epsilon$ and a positive integer $d$. 
	There exists a positive number $t=t(d,\epsilon)$ 
	depending only on $d$ and $\epsilon$, such that for any $d$-dimensional 
	$\epsilon$-klt log Calabi--Yau pair $(X,B)$, $(X,(1+t)B)$ is klt.
\end{conj}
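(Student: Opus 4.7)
The plan is to prove this conjecture by contradiction, combining (i) boundedness of $\epsilon$-klt log Calabi--Yau pairs in dimension $d$ with (ii) the ACC for log canonical thresholds of Hacon--McKernan--Xu. Suppose no such $t=t(d,\epsilon)$ exists. Then one can find a sequence $(X_i,B_i)$ of $d$-dimensional $\epsilon$-klt log Calabi--Yau pairs with $\lct(X_i,0;B_i)=1+t_i$ and $t_i\to 0^+$. The $\epsilon$-klt hypothesis forces every coefficient of $B_i$ to lie in $(0,1-\epsilon]$, while $K_{X_i}+B_i\equiv 0$ identifies $B_i$ numerically with $-K_{X_i}$; in particular $-K_{X_i}$ is pseudo-effective, and after an MMP the output lands on a Mori fiber space, which will allow us to work in a setting comparable to that of Conjecture \ref{conj3}.

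First I would put the $X_i$ into a bounded family. In the rationally connected setting this would invoke (a suitable extension of) Conjecture \ref{conj3}; in general one needs a separate boundedness input for $\epsilon$-klt log Calabi--Yau varieties of dimension $d$. Granting it, after passing to a subsequence we may assume that all $X_i$ are closed fibers of a fixed projective family $\mathcal{X}\to S$ of finite type, equipped with a relatively ample polarization $\HH$. Then $B_i\cdot (\HH|_{X_i})^{d-1}=(-K_{X_i})\cdot(\HH|_{X_i})^{d-1}$ is uniformly bounded, so the prime components of $B_i$ sweep out only finitely many bounded families of divisors on $\mathcal{X}/S$. A further subsequence and a limiting argument then extract a DCC set $I\subset(0,1-\epsilon]$ containing every coefficient of every $B_i$: the bound on $B_i\cdot(\HH|_{X_i})^{d-1}$ prevents infinitely many coefficients from exceeding any prescribed positive constant and rules out strict accumulation from below. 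Finally, Hacon--McKernan--Xu's ACC for log canonical thresholds, applied to the bounded family $\mathcal{X}\to S$ with boundary coefficients in the DCC set $I$, forces the values $\lct(X_i,0;B_i)$ to form an ACC set. This contradicts $1+t_i\searrow 1$ strictly, completing the proof.

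The main obstacle is the first step: boundedness of $\epsilon$-klt log Calabi--Yau varieties in dimension $d\geq 3$ is itself open and is the central difficulty motivating this paper. A subtler secondary point is the DCC extraction for the coefficients of $B_i$; in dimension two one can bypass it entirely, because direct volume and adjunction estimates, as in the proofs of Theorems \ref{thm surface} and \ref{conj LCT}, yield the conclusion with no appeal to ACC. In higher dimensions the analogous strategy would require a higher-dimensional version of Theorem \ref{thm surface}, but this is essentially equivalent to the BAB-type boundedness one is trying to apply, so there does not appear to be an easy shortcut.
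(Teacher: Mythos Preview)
The paper presents this statement as an open \emph{conjecture}; it gives no proof, and in fact remarks explicitly (in the Remark following Lemma~\ref{lemma ample}) that ``the conjecture is still widely open.'' So there is no paper proof to compare your attempt against.

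Your proposal is, as you yourself acknowledge, conditional rather than a proof. Two genuine gaps deserve to be named. First, the boundedness input you require is not merely unproved but actually false in the generality of the statement: the conjecture imposes no rational-connectedness hypothesis, so $X$ could be, for instance, $E\times\mathbb{P}^1$ for an arbitrary elliptic curve $E$, with $B$ a suitable combination of fibers of the second projection. Such $X$ do not lie in a bounded family (the $j$-invariant of $E$ is unconstrained), so your Step~(i) cannot succeed as written. Any argument along these lines would first have to reduce---via an MMP, a fibration, or some other device---to a setting where a boundedness statement is even plausible; Conjecture~\ref{conj3} alone, which assumes rational connectedness, does not suffice.

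Second, even granting boundedness of the $X_i$, your ``DCC extraction'' for the coefficients of $B_i$ is not justified. A uniform bound on the degree $B_i\cdot(\HH|_{X_i})^{d-1}$ bounds the sum of coefficients weighted by component degrees, but says nothing about accumulation of individual coefficients from below: the coefficients are arbitrary reals in $(0,1-\epsilon]$ and can perfectly well form a non-DCC set. Without a DCC coefficient set you cannot invoke the ACC for log canonical thresholds of Hacon--M\textsuperscript{c}Kernan--Xu. Closing this gap would require a separate idea---for example, approximating each $B_i$ by a divisor with coefficients in a fixed finite set while controlling the change in the lct---which is itself nontrivial and not supplied here.
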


\subsubsection{Effective construction of an ample divisor}
\begin{lem}\label{lemma ample}
	Under the setting introduced in Section \ref{setting},
	there exists a positive integer $n=n(\epsilon)$ depending 
	only on $\epsilon$ such that $-K_X+kG$ is ample for all $k\geq n$.
\end{lem}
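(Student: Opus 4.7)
The divisor $-K_X+kG$ is $f$-ample for every $k\geq 0$: indeed, $-K_X$ is $f$-ample by the Mori-fibration hypothesis, while $G$ is $f$-trivial ($G$ is a general fiber of $f$ in case $(1)$, or $G=f^*H$ is pulled back from the base in case $(2)$). In particular, $-K_X+kG$ is positive on every $f$-vertical curve for any $k\geq 0$, so by Kleiman's criterion it suffices to verify strict positivity on every $f$-horizontal class in $\overline{NE}(X)$. Thanks to $\rho(X/Z)=1$ and the boundedness of $\overline{NE}(Z)$ supplied by the setup, $\overline{NE}(X)$ is polyhedral, so this amounts to a check on finitely many extremal generators.

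The klt pair $(X,0)$ (klt as $X$ is terminal) and Kawamata's length theorem (Theorem \ref{ext ray}) handle the $K_X$-negative extremal rays: each is generated by a rational curve $C$ with $0<-K_X\cdot C\leq 2\dim X=6$, and nefness of $G$ then gives $(-K_X+kG)\cdot C>0$ for every $k\geq 0$. The crux is therefore to control the finitely many $K_X$-nonnegative extremal generators $C'$. Using $-K_X\equiv B$, such a $C'$ satisfies $B\cdot C'\leq 0$, so $C'\subset\Supp B$ (whenever $K_X\cdot C'>0$); moreover $G\cdot C'\geq 1$, since $G\cdot C'=0$ would force $C'$ to be proportional to the fiber class $[C_f]$, which is strictly $K_X$-negative. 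Ampleness of $-K_X+kG$ on $C'$ thus reduces to the uniform estimate $k>-B\cdot C'/G\cdot C'$.

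I produce this bound by reducing to the surface $G$. As recalled in Section \ref{setting}, $(G,B|_G)$ satisfies the hypotheses of Theorem \ref{conj LCT}(1) in case $(1)$ and of Theorem \ref{conj LCT}(2) in case $(2)$, so Theorem \ref{conj LCT} yields $\lambda=\lambda(\epsilon)>0$ such that $(G,(1+\lambda)B|_G)$ is klt. Kawamata's length theorem applied to this klt surface pair gives a uniform upper bound on the length of every $(K_G+(1+\lambda)B|_G)$-negative extremal ray on $G$; the adjunction $K_X|_G=K_G-G|_G$, together with the fact that the $f$-horizontal curve $C'$ meets a general member $G_0\in|G|$ in exactly $G\cdot C'$ points, then translates the surface-level estimate into the desired bound $-B\cdot C'\leq M(\epsilon)$. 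Any $n>M(\epsilon)$ yields the ampleness of $-K_X+kG$ for all $k\geq n$.

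The main obstacle is precisely this last translation from $G$ to $X$. The natural shortcut would be a $3$-fold version of Theorem \ref{conj LCT}, i.e.\ $(X,(1+\lambda')B)$ klt for some $\lambda'=\lambda'(\epsilon)$, from which one would read off the bound by applying Kawamata's length theorem directly on $X$; but this is Conjecture \ref{conj LCT higher}, still open in dimension three. The argument therefore has to work around this obstruction by exploiting the base-point-freeness of $|G|$ and the $f$-ampleness of $-K_X$ to pass from the surface estimate on $G$ to a global bound on $X$.
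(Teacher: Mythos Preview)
Your outline correctly isolates the difficulty—the $K_X$-nonnegative extremal rays—and correctly invokes Theorem \ref{conj LCT} to obtain $\lambda=\lambda(\epsilon)>0$ with $(G,(1+\lambda)B|_G)$ klt. But the proof then stalls at the step you yourself flag as ``the main obstacle.'' The promised ``translation'' from the surface estimate to a bound $-B\cdot C'\leq M(\epsilon)$ on $X$ is asserted, not established: Kawamata's length bound on $G$ controls extremal curves $\gamma\subset G$, whereas your $C'$ is an $f$-horizontal curve on $X$ that merely meets a general $G_0\in|G|$ in finitely many points. No mechanism is offered to convert information about $(K_G+(1+\lambda)B|_G)\cdot\gamma$ into a bound on $K_X\cdot C'$; the adjunction $K_X|_G=K_G-G|_G$ compares divisors restricted to $G$, not intersection numbers with a curve $C'\not\subset G$. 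Your final paragraph in effect concedes that this step is left open. (A secondary issue: the claimed polyhedrality of $\overline{NE}(X)$ is unjustified when $\dim Z=2$; boundedness of the family of bases $Z$ does not yield polyhedrality of $\overline{NE}(Z)$ for each individual $Z$.)

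The paper closes the gap by a different route. From the kltness of $(G,(1+\lambda)B|_G)$ and inversion of adjunction it deduces that every curve in $\Nlc(X,(1+\lambda)B)$ is contracted by $f$, so $G\cdot C_0=0$ for every class $C_0\in\overline{NE}(X)_{\Nlc(X,(1+\lambda)B)}$. This is exactly the hypothesis of Fujino's length estimate for non-lc pairs (Theorem \ref{ext ray2}): every $(K_X+(1+\lambda)B)$-negative extremal ray $R$ with $G\cdot R>0$ then satisfies $R\cap\overline{NE}(X)_{\Nlc(X,(1+\lambda)B)}=\{0\}$ and is generated by a rational curve $C'$ with $(K_X+(1+\lambda)B)\cdot C'\geq -6$. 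Combined with $G\cdot C'\geq 1$ and the identity $-K_X\equiv\frac{1}{\lambda}(K_X+(1+\lambda)B)$, this gives $(-K_X+kG)\cdot R>0$ once $k\geq 7/\lambda$; the remaining rays (those with $G\cdot R=0$, or $(K_X+(1+\lambda)B)$-nonnegative) are handled by direct computation. The ingredient missing from your argument is thus Theorem \ref{ext ray2}: it is precisely what lets the surface-level information ``klt along a general $G$'' produce a length bound on $X$ without appealing to the open $3$-fold case of Conjecture \ref{conj LCT higher}.
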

\begin{proof}
	By construction, $(G, B|_G)$ satisfies one of the two conditions in Theorem \ref{conj LCT}.
	Hence $(G, (1+\lambda)B|_G)$ is klt for $\lambda>0$ and $\lambda$ depends only on
	$\epsilon$ by Theorem \ref{conj LCT}.
	
	Therefore, in either case, every curve in $\Nlc(X, (1+ \lambda)B)$ is contracted by $f$, 
	by inversion of adjunction. That means that $f(\Nlc(X, (1+ \lambda)B))$ is a finite set of (closed) points. 
	In particular, every curve $C_0$ supported in $\Nlc(X, (1+ \lambda)B)$ 
	satisfies the equality $G\cdot C_0 =0$, since $G$ is the pull-back of 
	an ample divisor on $Z$. This implies that every class 
	$C\in \overline{NE}(X)_{\Nlc(X, (1+ \lambda)B)}$ satisfies $G\cdot C=0$.
	
	Let us consider an extremal ray $R$ of $\overline{NE}(X)$. Note that $G\cdot R\geq 0$.
	
	If $G\cdot R= 0$,  then $R$ is contracted by $f$ since $G$ is the pull-back of an ample
	divisor on $Z$ and $-K_X\cdot R>0$, as $-K_X$ is ample over $Z$.
	
	If $G\cdot R> 0$ and $R$ is $(K_X+(1+\lambda)B)$-non-negative, 
	then
	\[
	\Big(-K_X+\frac{7}{\lambda}G\Big)\cdot R=
	\frac{1}{\lambda}(K_X+(1+\lambda)B))\cdot R+\frac{7}{\lambda}G\cdot R> 0,
	\]
	as $K_X+B\equiv 0$.
	
	If $G\cdot R>0$ and $R$ is $(K_X+(1+ \lambda)B)$-negative, then
	\[
	R\cap \overline{NE}(X)_{\Nlc(X, (1+ \lambda)B)} = \{0\},
	\]
	since we showed that $G\cdot C =0$ for any class 
	$C\in \overline{NE}(X)_{\Nlc(X, (1+ \lambda)B)}$.
	By Theorem \ref{ext ray2}, $R$ is generated by a rational curve $C'$ such that
	\[
	(K_X+(1+ \lambda)B)\cdot C' \geq -6.
	\]
	On the other hand, $G\cdot C'\geq 1$ since $G\cdot C'>0$ and $G$ is Cartier.
	Hence,
	\begin{align*}
	{}&\Big(-K_X+\frac{7}{\lambda}G\Big)\cdot C'\\
	={}&\frac{1}{\lambda}(K_X+(1+\lambda)B)\cdot C'+\frac{7}{\lambda}G\cdot C'> 0.
	\end{align*}
	In summary, the inequality
	\[
	(-K_X+kG)\cdot R> 0
	\]
	holds for any extremal ray $R$ and for any $k\geq \frac{7}{\lambda}$,
	as $G$ is nef. By Kleiman's Ampleness Criterion,
	$-K_X+kG$ is ample for all $k\geq \frac{7}{\lambda}$. We may take 
	$n=\roundup{7/\lambda}$ to complete the proof.
\end{proof}

\begin{remark}
	If Conjecture \ref{conj LCT higher} holds in dimension $3$, then 
	Lemma \ref{lemma ample} is an easy consequence 
	of Kawamata's estimates on the length of extremal rays. 
	However, the conjecture is still wide open, so we need to use the 
	result for surfaces contained in Theorem \ref{thm surface}.
\end{remark}

\subsubsection{Boundedness of birationality}
\begin{lem}\label{lemma birationality}
	Under the setting of Section \ref{setting}, the linear system 
	$|-3K_X+kG|$ defines a birational map for $k\geq 4n+4$, where $n$ is the natural number 
	given in Lemma \ref{lemma ample}.
\end{lem}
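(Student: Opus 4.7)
Let $L = -3K_X + kG$. The plan is to show that for two generic points $x_1, x_2 \in X$ the linear system $|L|$ contains a section vanishing at $x_1$ and not at $x_2$; together with the symmetric statement and the fact that generic points miss the base locus of $|L|$, this forces $\phi_{|L|}$ to be birational. The two main ingredients are Kawamata--Viehweg vanishing, applied to $L - jG$ for $j \in \{0,1,2\}$ (the twist $L - jG - K_X = 4(-K_X + nG) + (k-j-4n)G$ is ample for $k \geq 4n + j$ by Lemma \ref{lemma ample} and the nefness of $G$), and the base-point-free pencil $|G|$ pulled back from $Z$.

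Because $f(x_1) \neq f(x_2)$ for generic $x_1, x_2$, and $|G|$ arises from a linear system on $Z$ that separates points (namely $|\OO_{\bP^1}(1)|$ in case (1) and the very ample $|H|$ in case (2)), I would pick $G_0, G_0' \in |G|$ with $x_1 \in G_0 \not\ni x_2$ and $x_2 \in G_0' \not\ni x_1$. Via the short exact sequence
\[
0 \to \OO_X(L-G) \xrightarrow{\,\cdot\sigma_{G_0}\,} \OO_X(L) \to \OO_{G_0}(L|_{G_0}) \to 0
\]
and $H^1(L-G) = 0$, any section $\tau \in H^0(X, L-G)$ with $\tau(x_2) \neq 0$ yields $\sigma_{G_0}\cdot\tau \in H^0(X, L)$ vanishing at $x_1 \in G_0$ and nonzero at $x_2 \notin G_0$. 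Iterating with $G_0'$ in place of $G_0$ and using $H^1(L-2G) = 0$, the existence of such $\tau$ reduces to producing a section of $(L-G)|_{G_0'}$ nonzero at $x_2 \in G_0'$.

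By adjunction $K_{G_0'} = K_X|_{G_0'} + G|_{G_0'}$, so
\[
(L - G)|_{G_0'} \;=\; -3K_{G_0'} + (k+2)\,G|_{G_0'}.
\]
In case (1), $G_0'$ is a smooth del Pezzo surface and $G|_{G_0'} = 0$ (fiber self-intersection vanishes), whence $(L-G)|_{G_0'} = -3K_{G_0'}$ is very ample by the standard theory of smooth del Pezzo surfaces. In case (2), $G_0' \to H'$ is a smooth conic bundle and $G|_{G_0'} = (f|_{G_0'})^{\ast}(H|_{H'})$ is a pullback from the curve $H'$; base-point-freeness at $x_2$ follows by a third Kawamata--Viehweg restriction to a generic fibre $F' \cong \bP^1$ of $G_0' \to H'$ through $x_2$, on which the divisor becomes $-3K_{F'} = \OO_{\bP^1}(6)$, very ample.

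The main technical obstacle lies in the surface step in case (2): the Kawamata--Viehweg vanishing for the restriction to $F'$ requires that $(L-G)|_{G_0'} - F' - K_{G_0'}$ be nef and big on the conic bundle $G_0'$, a condition which for horizontal multi-sections of high bidegree can fail numerically without extra input. I would resolve this by invoking the klt log Calabi--Yau structure $(G_0', (1+\lambda)B|_{G_0'})$ furnished by Theorem \ref{conj LCT}(2), using the log version of Kawamata--Viehweg vanishing to absorb the negative intersection of $-K_{G_0'}$ with such curves; propagating the positivity thresholds through the two restriction steps is precisely what pins down the explicit bound $k \geq 4n+4$.
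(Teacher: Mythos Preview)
Your overall strategy---separate two generic points $x_1,x_2$ by first placing them in distinct members $G_0,G_0'$ of $|G|$, then reduce to base-point freeness of $(L-G)|_{G_0'}$ at $x_2$ via two Kawamata--Viehweg restriction steps---is sound and would give a valid proof. It differs from the paper's argument mainly in bookkeeping: the paper restricts \emph{simultaneously} to two general members $G_1,G_2$ (case $\dim Z=1$) or to two fibres $F_1,F_2$ of $f|_G$ followed by a lift from $G$ to $X$ (case $\dim Z=2$), and then uses that the restricted systems $|{-3K_{G_i}}|$, resp.\ $|{-3K_{F_i}}|$, are already birational. Your version only needs base-point freeness on the restriction, because the separation of $x_1$ and $x_2$ is already achieved by the factor $\sigma_{G_0}$; this actually yields the slightly better bound $k\geq 4n+3$.

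The genuine problem is your final paragraph. The ``technical obstacle'' you describe does not exist, and your proposed resolution via Theorem~\ref{conj LCT}(2) and a log version of vanishing is both unnecessary and misplaced. Using adjunction $K_{G_0'}=(K_X+G)|_{G_0'}$ one has
\[
(L-G)|_{G_0'}-F'-K_{G_0'}\;=\;(-4K_X+(k-2)G)|_{G_0'}-F'
\;=\;4(-K_X+nG)|_{G_0'}+(k-4n-3)\,G|_{G_0'}+\bigl(G|_{G_0'}-F'\bigr).
\]
The first summand is ample by Lemma~\ref{lemma ample}, the second is nef for $k\geq 4n+3$, and the third is nef because $G|_{G_0'}=(f|_{G_0'})^\ast(H|_{H'})\equiv (H^2)F'$ with $H^2\geq 1$. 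Hence the whole expression is ample and ordinary Kawamata--Viehweg vanishing applies directly. The input from Theorem~\ref{conj LCT} is already fully absorbed into the constant $n$ produced by Lemma~\ref{lemma ample}; it plays no further role here, and ``negative intersection of $-K_{G_0'}$ with horizontal multisections'' never enters the computation.
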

\begin{proof}
	By Lemma \ref{lemma ample}, $-K_X+nG$ is ample.
	
	If $\dim Z=1$, then $G$ is a  smooth del Pezzo surface. 
	It is well-known that $|-3K_G|$ gives a birational map (in fact, an embedding).
	For two general fibers $G_1$ and $G_2$ of $f$, and for an integer $k\geq 4n+2$, 
	consider the short exact sequence
	\begin{align*}
	0\to {}&\OO_X(-3K_X+kG-G_1-G_2)\to \OO_X(-3K_X+kG)\\
	\to{}& \OO_{G_1}(-3K_{G_1})\oplus   \OO_{G_2}(-3K_{G_2})\to 0.
	\end{align*}
	Since $k\geq 4n+2$,
	$-4K_X+kG-G_1-G_2$
	is ample, by the Kawamata--Viehweg vanishing theorem
	\begin{align*}
	{}&H^1(X, \OO_X(-3K_X+kG-G_1-G_2))\\
	={}&H^1(X, \OO_X(K_X-4K_X+kG-G_1-G_2))=0.
	\end{align*}
	Hence, the map
	\begin{align*}
	%{}& 
	H^0(X,\OO_X(-3K_X+kG)) \to 
	%{}&  
	H^0(G_1, \OO_{G_1}(-3K_{G_1}))\oplus H^0(G_2,  \OO_{G_2}(-3K_{G_2}))
	\end{align*}
	is surjective. 
	Since $|-3K_{G_i}|$ gives a birational map on $G_i$ for 
	$i=1,2$, $|-3K_X+kG|$ gives a birational map on $X$ 
	for all $k\geq 4n+2$.
	
	Suppose now that $\dim Z=2$. 
	Note that $-K_X+k G$ is ample for $k\geq n$, and so is $-K_X|_{G}+k G|_G$.
	For two general fibers $F_1$ and $F_2$ of $f|_G$, and for an integer 
	$k\geq 4n+3$, let us consider the short exact sequence
	\begin{align*}
	0\to {}&\OO_G(-3K_X|_G+kG|_G-F_1-F_2)\to \OO_G(-3K_X|_G+kG|_G)\\
	\to{}& \OO_{F_1}(-3K_{F_1})\oplus   \OO_{F_2}(-3K_{F_2})\to 0.
	\end{align*}
	%Note that the exactness follows from the fact that $X$ is terminal, cf. \cite[Corollary 5.18, Proposition 5.26]{KM}.
	Since $k\geq 4n+3$,
	$$-3K_X|_G+kG|_G-F_1-F_2-K_G\equiv -4K_X|_G+(k-1)G|_G-F_1-F_2$$
	is ample. Again, by the Kawamata--Viehweg vanishing theorem,
	\begin{align*}
	{}&H^1(G, \OO_G(-3K_X|_G+kG|_G-F_1-F_2))=0.
	\end{align*}
	Hence,
	\begin{align*}
	{}& H^0(G,\OO_G(-3K_X|_G+kG|_G))\\
	\to {}&  H^0(F_1, \OO_{F_1}(-3K_{F_1}))\oplus H^0(F_2,  \OO_{F_2}(-3K_{F_2}))
	\end{align*}
	is surjective. Since $|-3K_{F_i}|$ gives a birational map on 
	$F_i\simeq \bP^1$ for $i=1,2$, $|-3K_X|_G+kG|_G|$ gives a 
	birational map on $G$ for all $k\geq 4n+3$.

	For an  integer $k\geq 4n+3$, consider the short exact sequence
	\begin{align*}
	0\to {}&\OO_X(-3K_X+(k-1)G)\to \OO_X(-3K_X+kG)\\
	\to{}&
	%\OO_{X_1}(-3K_X|_{X_1}+k{G}|_{G_1})=
	\OO_{G}(-3K_X|_G+kG|_G)\to 0.
	\end{align*}
	Since $-4K_X+(k-1)G$ is ample,  by the Kawamata--Viehweg vanishing theorem,
	\begin{align*}
	{}&H^1(X, \OO_X(-3K_X+(k-1)G))\\
	={}&H^1(X, \OO_X(K_X-4K_X+(k-1)G))=0.
	\end{align*}
	Hence
	\begin{align*}
	{}& H^0(X,\OO_X(-3K_X+kG))\to {} H^0(G,  \OO_{G}(-3K_X|_G+kG|_G))
	\end{align*}
	is surjective.  
	We showed that $|-3K_X|_G+kG|_G|$ gives a 
	birational map on $G$ since $k\geq 4n+3.$
	In particular, $|-3K_X+kG|\neq \emptyset$.
	Hence $|-3K_X+(k+1)G|$ can separate general elements in $|G|$, and
	\begin{align*}
	{}& H^0(X,\OO_X(-3K_X+(k+1)G))\to {} H^0(G,  \OO_{G}(-3K_X|_G+(k+1)G|_G))
	\end{align*}
	is surjective, which
	gives a birational map on $G$. 
	This implies that $|-3K_X+(k+1)G|$ gives a birational map for all $k\geq 4n+3$. 
	The proof is then complete.
\end{proof}

\subsubsection{Boundedness of the volume}

\begin{lem}\label{lem volume}
	In the same setting as Section \ref{setting}, there exists a 
	positive integer $v=v(\epsilon)$ depending only on 
	$\epsilon$ such that $(-K_X+n G)^3\leq v$, where $n$ 
	is the natural number given in Lemma \ref{lemma ample}.
\end{lem}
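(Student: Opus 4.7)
The plan is to interpret $(-K_X+nG)^3$ as a volume and use Lemma~\ref{lemma volume} to reduce it to a volume on the smooth surface $G$, which then becomes a bounded intersection number.

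Since $-K_X+nG$ is ample by Lemma~\ref{lemma ample}, $(-K_X+nG)^3=\Vol(X,-K_X+nG)$. The divisor $G$ is smooth, Cartier, prime, and base-point free on $X$ by the Setting of Section~\ref{setting}, so Lemma~\ref{lemma volume} applies with $D=-K_X$ and $qS=nG$ to give
\[
\Vol(X,-K_X+nG)\leq \Vol(X,-K_X)+3n\,\Vol\bigl(G,\,(-K_X+nG)|_G\bigr).
\]
The crucial point is that the hypothesis that $-K_X$ is not big forces $\Vol(X,-K_X)=0$. Moreover, $(-K_X+nG)|_G$ is ample on $G$ as the restriction of an ample divisor, so its volume equals the self-intersection $\bigl((-K_X+nG)|_G\bigr)^2$.

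By adjunction $K_G=(K_X+G)|_G$, one has $(-K_X+nG)|_G\equiv -K_G+(1+n)G|_G$. When $\dim Z=1$, $G$ is a smooth del Pezzo surface and general fibers of $f$ are disjoint, so $G|_G\equiv 0$ and $\bigl((-K_X+nG)|_G\bigr)^2=K_G^2\leq 9$. When $\dim Z=2$, $G|_G\equiv (H^2)F$ for a general fiber $F\cong\bP^1$ of the conic bundle $f|_G\colon G\to H$, and adjunction on $F$ (using $F^2=0$ on $G$) yields $-K_G\cdot F=2$, so
\[
\bigl((-K_X+nG)|_G\bigr)^2=K_G^2+4(1+n)(H^2)\leq 8+4c(1+n),
\]
invoking $K_G^2\leq 8$, the classical bound for smooth surfaces admitting a conic bundle structure over a smooth curve (a minimal ruled surface has $K^2=8(1-g)\leq 8$, and degenerate fibers can only decrease $K^2$).

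Combining these estimates, $(-K_X+nG)^3\leq 3n\bigl(8+4c(1+n)\bigr)$, a quantity depending only on $\epsilon$ through $n=n(\epsilon)$ and $c=c(\epsilon)$, and one defines $v=v(\epsilon)$ to be the least integer majorizing it. The main idea is Lemma~\ref{lemma volume} coupled with the vanishing $\Vol(X,-K_X)=0$: this converts the $3$-fold problem into a surface problem without having to bound the individual terms in the expansion of $(-K_X+nG)^3$, such as $(-K_X)^3$ or $(-K_X)^2\cdot G$, which are not controlled by the hypotheses on their own. The only auxiliary input beyond the volume lemma is the standard bound $K_G^2\leq 8$ on smooth conic bundle surfaces.
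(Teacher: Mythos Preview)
Your proof is correct and follows the same approach as the paper: apply Lemma~\ref{lemma volume} with $D=-K_X$ and $qS=nG$, use $\Vol(X,-K_X)=0$ to reduce to bounding $\Vol(G,(-K_X+nG)|_G)$, and then compute this surface volume via adjunction and the bounds $K_G^2\leq 9$ (del Pezzo case) or $K_G^2\leq 8$ (conic bundle case). The resulting bounds $27n$ and $3n(8+4c(n+1))$ match those in the paper exactly.
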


\begin{proof}
	If $\dim Z=1$,  $G$ is a smooth del Pezzo surface. 
	Note that $\Vol(G, -K_G)=K_G^2\leq 9$.
	By Lemma \ref{lemma volume} and the fact that $-K_X$ is not big,
	\begin{align*}
	0=\Vol(X, -K_X)
	\geq \Vol(X, -K_X+n G)-3n\Vol(G, -K_G).
	\end{align*}
	This implies that $(-K_X+n G)^3\leq 27n.$
	
	Now suppose that $\dim Z=2$.
	As constructed in Section \ref{setting}, the fibration
	$f|_G \colon G\to H$ is a conic bundle from a 
	smooth surface to a smooth curve.
	\begin{claim}\label{claim}
		$\Vol(G, -K_X|_{G}+n G|_G)\leq 8+4(n+1)c$.
	\end{claim}
	\begin{proof}[Proof of Claim \ref{claim}]
		As $-K_X+n G$ is ample, so is $-K_X|_{G}+n G|_G$. 
		Also note that $H^2\leq c$ and $G|_G\equiv (H^2)F$ 
		where $F\simeq \bP^1$ is a general fiber of $f|_G$. Hence
		\begin{align*}
		\Vol(G, -K_X|_{G}+n G|_G){}&=(-K_X|_{G}+n G|_G)^2\\
		{}&=(-K_{G}+(n+1) G|_G)^2\\
		{}&=(K_G)^2-2(n+1)K_G\cdot (H^2)F\\
		{}&\leq 8+4(n+1)c,
		\end{align*}
		where we used the fact that for the conic bundle $G$, 
		$K_G^2\leq 8$.
	\end{proof}
	By Lemma \ref{lemma volume} and Claim \ref{claim},
	\begin{align*}
	0=\Vol(X, -K_X) \geq \Vol(X, -K_X+n G)-3n\Vol(G, -K_X|_{G}+n G|_G).
	\end{align*}
	In particular, $(-K_X+n G)^3\leq 3n(8+4(n+1)c)$.
\end{proof}

\subsection{Proof of Theorems \ref{main} and \ref{main2}}\label{subsec mains}
\begin{proof}[Proof of Theorem \ref{main2}]
	According to Proposition \ref{thm MFS}, after a 
	birational modification, we may assume that 
	$X$ has a terminal Mori fibration structure 
	$f\colon X\to Z$. If $-K_X$ is big, then $X$ 
	is of Fano type, and the (birational) boundedness 
	follows from the solution of (birational) BAB 
	conjecture in dimension $3$, see 
	\cite[Corollary 1.8]{Jiang6} or \cite[Corollary 1.2]{Bir16b}.  
	So we only need to consider the case when 
	$-K_X$ is not big (and $\dim Z>0$).
	By Theorem \ref{NV}, there exist positive integers 
	$n = n(\epsilon)$ and $v = v(\epsilon)$ depending 
	only on $\epsilon$, and an effective Cartier divisor 
	$G$ on $X$, such that $|-3K_X+8nG|$ defines a 
	birational map, and $(-K_X+nG)^3\le v$. 
	Moreover, ${\Vol}(X, -3K_X+8nG)$ is bounded from above by
	\[
	{\Vol}(X, -3K_X+8nG)\le {\Vol}(X, -8K_X+8nG)\le 512v.
	\]
	Therefore, $X$ belongs to a birationally bounded family 
	by the boundedness of Chow varieties (see, for example, 
	\cite[Lemma 2.4.2(2)]{HMX13}).
\end{proof}

\begin{proof}[Proof of Theorem \ref{main}]
	By Theorem \ref{main2}, it suffices to prove that there 
	exists $\epsilon>0$ independent of $X$, such that 
	$X$ is $\epsilon$-klt. This is shown in Lemma 
	\ref{klt=e-klt}, as an easy consequence of the 
	Global ACC and may be well-known to the experts. 
\end{proof}

\begin{lem}\label{klt=e-klt}
	Fix a positive integer $d$. Then there exists a 
	positive real number $\epsilon=\epsilon(d)$ 
	depending only on $d$, such that every 
	$d$-dimensional klt Calabi--Yau variety is $\epsilon$-klt.
\end{lem}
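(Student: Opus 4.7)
The plan is to argue by contradiction, with the Global ACC theorem of Hacon--McKernan--Xu \cite[Theorem 1.5]{HMX14} as the single nontrivial input.

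Suppose no such $\epsilon(d)$ exists. Then for each $i\geq 1$ one can choose a $d$-dimensional klt Calabi--Yau variety $X_i$ and an exceptional prime divisor $E_i$ over $X_i$ with log discrepancy
\[
\epsilon_i := a(E_i, X_i, 0) \in (0, 1/i).
\]
Since $(X_i, 0)$ is klt and $\epsilon_i < 1$, standard divisorial extraction (a consequence of \cite[Corollary 1.4.3]{BCHM}: blow up $E_i$ on a log resolution and run the MMP over $X_i$) produces a projective birational morphism $\pi_i : Y_i \to X_i$ whose only exceptional divisor is $E_i$ and with $Y_i$ being $\bQ$-factorial. The crepant pullback formula reads
\[
K_{Y_i} + (1 - \epsilon_i) E_i = \pi_i^* K_{X_i} \equiv 0.
\]
Since this pair is crepant to $(X_i, 0)$ and carries coefficient $1-\epsilon_i<1$ on $E_i$, it follows that $(Y_i, (1-\epsilon_i) E_i)$ is a $d$-dimensional klt log Calabi--Yau pair.

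After passing to a subsequence, I may assume the $\epsilon_i$ form a strictly decreasing sequence with $\epsilon_i \to 0$, so that the coefficients $1 - \epsilon_i$ form a strictly increasing sequence in $[0,1)$ accumulating at $1$. Define
\[
I := \{ 1 - \epsilon_j : j \geq 1 \} \cup \{0\}.
\]
Then $I$ satisfies the DCC (it is bounded below by $0$ and any descending sequence stabilizes), and the coefficient of the boundary of each $(Y_i, (1-\epsilon_i)E_i)$ lies in $I$. By the Global ACC \cite[Theorem 1.5]{HMX14}, the set of coefficients arising in $d$-dimensional lc log Calabi--Yau pairs with boundary coefficients in $I$ is finite. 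This contradicts the construction, in which the infinitely many distinct values $\{1-\epsilon_i\}\subset I$ all appear as such coefficients.

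The only substantive obstacle is invoking the correct form of the Global ACC; the rest is a routine extraction-plus-crepant-pullback computation, and the statement is insensitive to any particular shape of singularities beyond klt.
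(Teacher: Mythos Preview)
Your proof is correct and follows essentially the same approach as the paper: argue by contradiction, extract a divisor of small log discrepancy via \cite[Corollary 1.4.3]{BCHM} to obtain klt log Calabi--Yau pairs $(Y_i,(1-\epsilon_i)E_i)$, and then invoke the Global ACC \cite[Theorem 1.5]{HMX14} against the infinite DCC set of coefficients $\{1-\epsilon_i\}$. The only differences are cosmetic---you spell out a few more details (why $E_i$ is exceptional, why the extracted pair is klt, why $I$ is DCC)---but the structure and the single nontrivial input are identical.
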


\begin{proof}
	Assume by contradiction that $\{X_i\}$ is a sequence 
	of $d$-dimensional klt Calabi--Yau variety with 
	$\lim_{i\to +\infty}\epsilon_i=0$, where $\epsilon_i>0$ 
	is the minimal log discrepancy of $X_i$. Passing to a 
	subsequence, we may assume that $\epsilon_i$ 
	is decreasing and $\epsilon_i<1$. Let $(X_i',(1-\epsilon_i) D_i')\to X_i$ 
	be the klt pair obtained by extracting a prime divisor of log
	discrepancy $\epsilon_i$. Then 
	$K_{X_i'}+(1-\epsilon_i) D_i'\equiv 0$ and the 
	coefficient of $(1-\epsilon_i) D_i'$ belong to the 
	set $\{1-\epsilon_i\mid i\in\mathbb{N}\}$, which satisfies 
	the descending chain condition and is infinite. 
	This contradicts the Global ACC \cite[Theorem 1.5]{HMX14}.
\end{proof}

\section{Boundedness of Mori fibrations with bounded coefficients}\label{sec 4}
The aim of this section is to prove the following result.

\begin{thm}\label{t.bdd.lcy3fold}
	Fix positive real numbers $\epsilon$, $\delta$, and a positive integer $d$. 
	Assume that Conjecture \ref{conj3} holds in dimension $\leq d-1$.
	Then, the set of log pairs $(X,B)$ satisfying
	\begin{enumerate}
		\item $(X,B)$ is an $\epsilon$-lc log Calabi--Yau 
		pair of dimension $d$,
		%\item $B >0$, 
		\item $X$ is rationally connected,
		\item $B>0$, and the coefficients of $B$ are at least $\delta$,
	\end{enumerate}
	forms a log bounded family modulo flops. % log bounded in codimension one.
\end{thm}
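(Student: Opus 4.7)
The plan is to reduce $(X,B)$ to a Mori fibration, use Conjecture~\ref{conj3} in lower dimensions to bound the base, apply the relative Special BAB (Theorem~\ref{t.bdd.mfs}) to bound the total space, and then address the codimension-one subtlety inherent in ``modulo flops''.

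First, I would pass to a small $\bQ$-factorialization $\pi\colon\tilde X\to X$, which exists by \cite[Corollary~1.4.3]{BCHM}; with $\tilde B=\pi^{-1}_*B$, the pair $(\tilde X,\tilde B)$ is an $\epsilon$-lc log Calabi--Yau pair with $\tilde X$ $\bQ$-factorial, rationally connected, and isomorphic to $X$ in codimension one, while $\tilde B>0$ has coefficients $\ge\delta$. Since $X$ is rationally connected, $K_{\tilde X}$ is not pseudo-effective, so a $K_{\tilde X}$-MMP terminates in a Mori fiber space $g\colon X'\to Z$ with $X'$ rationally connected; writing $B'$ for the strict transform of $\tilde B$, the pair $(X',B')$ remains $\epsilon$-lc log Calabi--Yau with $B'>0$ of coefficients $\ge\delta$.

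Next, I would bound the base $Z$. If $\dim Z=0$, then $X'$ is an $\epsilon$-Fano type variety and its boundedness follows from~\cite{Bir16b}. If $\dim Z>0$, the canonical bundle formula \cite[Corollary~1.7]{Birkar} produces $\Delta_Z\ge 0$ such that $(Z,\Delta_Z)$ is a $\delta'$-klt log Calabi--Yau pair for some $\delta'=\delta'(\epsilon,d)>0$. Since $Z$ is rationally connected of dimension $\le d-1$, Conjecture~\ref{conj3} yields that $Z$ lies in a bounded family. I would then invoke Theorem~\ref{t.bdd.mfs}, the relative Special BAB: with $Z$ bounded and the coefficients of $B'$ bounded below by $\delta$, the pair $(X',B')$ is log bounded.

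The main obstacle will be the transition from log boundedness of the Mori model $(X',B')$ to log boundedness \emph{modulo flops} of $(X,B)$: the birational map $\tilde X\dashrightarrow X'$ is a composition of flips, which are isomorphisms in codimension one, and possibly divisorial contractions, which are not. To handle this, I would observe that any divisor $E$ contracted in such an MMP step has log discrepancy $a(E,X,B)\in[\epsilon,1]$; combined with the lower bound $\delta$ on coefficients and the Global ACC \cite[Theorem~1.5]{HMX14}, the crepant extractions of such divisors fall into finitely many discrete types. Using the finiteness of weak log canonical models \cite{BCHM}, one could perform these extractions in family over the bounded family produced above, producing a log bounded family whose members are isomorphic to $(X,B)$ in codimension one, which is precisely log boundedness modulo flops.
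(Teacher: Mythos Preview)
Your proposal follows essentially the same route as the paper: $\bQ$-factorialize, run a $K_X$-MMP to a Mori fiber space $X'\to Z$, bound $Z$ via Conjecture~\ref{conj3} (using the canonical bundle formula to get an $\epsilon'$-klt log Calabi--Yau structure on $Z$), apply Theorem~\ref{t.bdd.mfs} to log-bound $(X',B')$, and then re-extract the divisors contracted by the MMP in family. The key observation $a(E,X',B')=a(E,X,B)\le 1$ for contracted divisors is exactly what the paper uses.

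Two comments on your final step. First, the appeal to the Global ACC is misplaced: the log discrepancies $a(E,X,B)$ of contracted divisors range over the whole interval $[\epsilon,1]$ and need not lie in a finite or ACC set, so the ``finitely many discrete types'' claim does not follow from \cite[Theorem~1.5]{HMX14}. What actually makes the extraction work in family is your second ingredient, finiteness of models, applied not pair-by-pair but over the bounded family: one passes to a log resolution of the bounding family, notes that all divisors of log discrepancy $\le 1$ over $(X',B')$ already appear there (since $(X',B')$ is $\epsilon$-lc), and then runs minimal models over the base for boundaries in the polytope with coefficients in $[0,1-\epsilon/2]$. This is precisely the content of Proposition~\ref{p.blow.ups.fam}, which the paper invokes at this point; your sketch has the right idea but you should drop the Global ACC and argue directly via \cite[Corollary~1.1.5]{BCHM} in family. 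Second, in the case $\dim Z=0$ you need $(X',B')$ log bounded, not just $X'$ bounded; the paper handles this by bounding the degree of $\Supp(B')$ against the very ample $-rK_{X'}$ using the lower bound $\delta$ on coefficients.
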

As Conjecture \ref{conj3} holds in dimension $\leq 2$, we have the following corollary.
\begin{cor}\label{cor.bdd.lcy3fold}
	Fix positive real numbers $\epsilon$, $\delta$. 
	Then, the the set of log pairs $(X,B)$ satisfying
	\begin{enumerate}
		\item $(X,B)$ is an $\epsilon$-lc log Calabi--Yau 
		pair of dimension $3$,
		%\item $B >0$, 
		\item $X$ is rationally connected,
		\item $B>0$, and the coefficients of $B$ are at least $\delta$,
	\end{enumerate}
	forms a log bounded family modulo flops. %a family log bounded in codimension one.
\end{cor}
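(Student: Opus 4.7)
The corollary is the specialization to $d = 3$ of Theorem \ref{t.bdd.lcy3fold}. My plan therefore consists of verifying the single hypothesis of that theorem---that Conjecture \ref{conj3} holds in dimensions $\leq 2$---and then invoking the theorem.

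In dimension $1$, the only rationally connected smooth projective variety is $\mathbb{P}^{1}$, so Conjecture \ref{conj3} is trivial. In dimension $2$, Conjecture \ref{conj3} is the special case $K_{X}+B\equiv 0$ of Conjecture \ref{conj2}, and the latter was established in dimension $2$ by Alexeev \cite[Theorem 6.9]{AK2}, as recorded explicitly in the introduction immediately after the statement of Conjecture \ref{conj2}. Thus the hypothesis of Theorem \ref{t.bdd.lcy3fold} is satisfied for $d = 3$, and the conclusion of the corollary follows directly.

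The substantive content of the argument is entirely internal to Theorem \ref{t.bdd.lcy3fold}, whose proof---as previewed in the introduction---I would organize as follows. First I would use a terminalization together with an MMP on the ambient variety (in the spirit of Proposition \ref{thm MFS}) to replace $(X, B)$ by a pair carrying a terminal Mori fibration $f\colon X'\to Z$, with $(X', B')$ still $\epsilon$-lc log Calabi--Yau and coefficients of $B'$ still at least $\delta$. If $\dim Z = 0$ then $X'$ is of Fano type, and boundedness follows from BAB in dimension $3$ \cite[Corollary 1.2]{Bir16b}. If $\dim Z > 0$, the canonical bundle formula (via \cite[Corollary 1.7]{Birkar}) makes $Z$ a rationally connected variety of $\epsilon'$-CY type for some $\epsilon' = \epsilon'(\epsilon) > 0$, so Conjecture \ref{conj3} in dimension $\dim Z < 3$ bounds the base. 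One then applies the relative version of the Special BAB (Theorem \ref{t.bdd.mfs}) to bound $X' \to Z$ together with its boundary; the hypothesis that the coefficients of $B$ are $\geq \delta$, combined with $-K_{X'}\equiv B'$, forces the number and the degrees of the components of $B'$ to be bounded, upgrading birational boundedness to log boundedness.

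The hard part, and the reason the statement is phrased ``modulo flops'', is that the MMP producing $X'$ need not be an isomorphism in codimension one: divisorial contractions can occur. The flop ambiguity built into the definition of log-bounded-modulo-flops is precisely what absorbs this discrepancy, allowing $(X, B)$ to be identified in codimension one with a pair in the bounded family obtained from the Mori fibration picture. For the corollary itself, however, no further work beyond the citations above is required once Theorem \ref{t.bdd.lcy3fold} is granted.
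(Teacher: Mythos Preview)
Your derivation of the corollary is correct and identical to the paper's: the paper states it immediately after Theorem \ref{t.bdd.lcy3fold} with the single remark that Conjecture \ref{conj3} holds in dimension $\leq 2$, citing Alexeev, exactly as you do.

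Your optional sketch of the proof of Theorem \ref{t.bdd.lcy3fold} is also largely faithful to the paper, but the last paragraph misidentifies the mechanism behind ``modulo flops''. A divisorial contraction in the $K_X$-MMP is not absorbed by the flop ambiguity in the definition; an isomorphism in codimension one cannot by itself recover a contracted divisor. What the paper actually does is observe that every prime divisor $E$ on $X$ exceptional over the Mori fiber space $Y$ satisfies $a(E, Y, B_Y)=a(E, X, B)\leq 1$, and then applies Proposition \ref{p.blow.ups.fam} (a generalization of \cite[Proposition 2.5]{HX15}) to extract all such divisors from $(Y, B_Y)$ within a bounded family. The resulting extracted models are then isomorphic in codimension one to $X$, and that is where the ``modulo flops'' enters. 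Since you correctly note that none of this is needed for the corollary itself, this does not affect the validity of your proof.
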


Before proceeding to the proof of Theorem \ref{t.bdd.lcy3fold},
we first need some technical results extending those 
contained in \cite{HMX14, DCS}. Such extensions
are made possible by recent work of Birkar solving 
the BAB Conjecture \cite{Bir16a, Bir16b}.

The following lemma can be viewed as a generalization of \cite[Lemma 6.1]{HMX14}
\begin{lem}\label{l.1}
	Fix $0<\epsilon'<\epsilon\leq 1$, and a positive integer $d$.
	Then there is a positive number $t=t(d,\epsilon, \epsilon')$ depending only on $d$, $\epsilon$, and $\epsilon'$, 
	such that if $(X, B)$ is an $\epsilon$-lc pair 
	of dimension $d$ and $(X, \Phi)$ is a log pair such that $\Phi\geq(1-t)B$ and 
	$K_{X}+B\equiv K_{X}+\Phi\equiv 0$, then the pair 
	$(X,\Phi)$ is $\epsilon'$-klt.
\end{lem}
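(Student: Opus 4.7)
The plan is to argue by contradiction, generalizing \cite[Lemma 6.1]{HMX14}, which establishes the case $\epsilon'=0$ using Global ACC \cite[Theorem 1.5]{HMX14} in dimension $d-1$. Suppose the lemma fails. Then there exist sequences $t_i\searrow 0$, $\epsilon$-lc log Calabi--Yau pairs $(X_i,B_i)$ of dimension $d$, and log pairs $(X_i,\Phi_i)$ with $K_{X_i}+\Phi_i\equiv 0$ and $\Phi_i\geq (1-t_i)B_i$ such that $(X_i,\Phi_i)$ is not $\epsilon'$-klt. Since $\epsilon>\epsilon'$, every log discrepancy of $(X_i,B_i)$ is at least $\epsilon>\epsilon'$, so $(X_i,B_i)$ is itself $\epsilon'$-klt. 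Setting
\[
c_i:=\sup\bigl\{c\in[0,1]:(X_i,(1-c)B_i+c\Phi_i)\text{ is }\epsilon'\text{-klt}\bigr\}\in(0,1],\qquad \Delta_i:=(1-c_i)B_i+c_i\Phi_i,
\]
continuity of log discrepancies in the boundary shows that $(X_i,\Delta_i)$ is $\epsilon'$-lc with a divisorial valuation $E_i$ over $X_i$ attaining $a(E_i,X_i,\Delta_i)=\epsilon'$; moreover $K_{X_i}+\Delta_i\equiv 0$ and $\Delta_i\geq(1-c_it_i)B_i\geq(1-t_i)B_i$.

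Next, I would extract $E_i$ via \cite[Corollary 1.4.3]{BCHM} to produce a divisorial extraction $f_i\colon Y_i\to X_i$ with $\mathrm{Ex}(f_i)=E_i$ and $Y_i$ $\mathbb{Q}$-factorial. Letting $a_i:=a(E_i,X_i,B_i)\geq\epsilon$, the log pullbacks
\[
K_{Y_i}+(1-\epsilon')E_i+\tilde\Delta_i=f_i^*(K_{X_i}+\Delta_i), \qquad K_{Y_i}+(1-a_i)E_i+\tilde B_i=f_i^*(K_{X_i}+B_i)
\]
give an $\epsilon'$-lc and an $\epsilon$-lc log Calabi--Yau pair on $Y_i$. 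Since $1-\epsilon'\geq (1-t_i)(1-a_i)$ for $t_i$ small in terms of $\epsilon-\epsilon'$, and $\tilde\Delta_i\geq(1-t_i)\tilde B_i$ on the strict transforms, we still have $(1-\epsilon')E_i+\tilde\Delta_i\geq(1-t_i)\bigl((1-a_i)E_i+\tilde B_i\bigr)$.

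Finally, I would restrict to $E_i$. Because the Different $\mathrm{Diff}_{E_i}$ is linear in the boundary components transverse to $E_i$, adjunction with the coefficient-$(1-\epsilon')$ version, which introduces an $\epsilon'E_i|_{E_i}$-twist, yields
\[
K_{E_i}+\mathrm{Diff}_{E_i}(\tilde\Delta_i)\equiv\epsilon'\,E_i|_{E_i}, \qquad K_{E_i}+\mathrm{Diff}_{E_i}(\tilde B_i)\equiv a_i\,E_i|_{E_i},
\]
together with $\mathrm{Diff}_{E_i}(\tilde\Delta_i)\geq(1-t_i)\mathrm{Diff}_{E_i}(\tilde B_i)$ and $\mathrm{Diff}_{E_i}(\tilde\Delta_i)-\mathrm{Diff}_{E_i}(\tilde B_i)\equiv(\epsilon'-a_i)E_i|_{E_i}$. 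One then invokes Global ACC \cite[Theorem 1.5]{HMX14} in dimension $d-1$, after absorbing the small $E_i|_{E_i}$-twist into a perturbation of the boundary (possible because $-E_i|_{E_i}$ is $f_i|_{E_i}$-ample, coming from the divisorial extraction), to conclude that the coefficients of both Differents lie in a DCC set and hence in a finite set. Combined with $t_i\to 0$, this forces $\mathrm{Diff}_{E_i}(\tilde\Delta_i)=\mathrm{Diff}_{E_i}(\tilde B_i)$ as divisors on $E_i$ for $i\gg 0$; since $E_i|_{E_i}\not\equiv 0$, this yields $a_i=\epsilon'$, contradicting $a_i\geq\epsilon>\epsilon'$. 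The main obstacle is this final step: rigorously absorbing the $\epsilon'E_i|_{E_i}$-twist (absent when $\epsilon'=0$) so as to apply Global ACC to the restricted pairs on $E_i$.
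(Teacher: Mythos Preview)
Your first three steps---the contradiction setup, the interpolation to an $\epsilon'$-lc but not $\epsilon'$-klt pair $\Delta_i$, and the extraction of $E_i$---coincide with the paper's proof. The divergence, and the genuine gap, is in the endgame.

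Restricting to $E_i$ via adjunction does not lead to a situation where Global ACC applies. Two obstructions are fatal. First, the coefficients of $\mathrm{Diff}_{E_i}(\tilde\Delta_i)$ and $\mathrm{Diff}_{E_i}(\tilde B_i)$ do not lie in any fixed DCC set, since the coefficients of $B_i$ and $\Delta_i$ themselves are arbitrary real numbers; Global ACC therefore has no purchase. Second, the twist $\epsilon' E_i|_{E_i}$ cannot be absorbed as you suggest: $-E_i|_{E_i}$ is only ample \emph{over} $f_i(E_i)$, and the center $f_i(E_i)\subset X_i$ may have positive dimension, so $E_i|_{E_i}$ need not be effective or anti-effective globally, and absorbing it would in any case destroy any DCC structure. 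The obstacle you flag is not a technicality but the heart of the matter.

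The paper proceeds differently after extracting $E_i$ (there called $S_i$): it runs a $(K_{Y_i}+\phi_{i,*}^{-1}\Phi_i)$-MMP, which is also a $(-S_i)$-MMP, terminating in a Mori fiber space $W_i\to Z_i$ on which the strict transform $S_i'$ dominates $Z_i$. Restricting to a general fiber $F_i$ yields an $\epsilon'$-lc Fano variety of dimension $\leq d$, and Birkar's BAB theorem \cite{Bir16b} gives uniform $r,M$ with $rK_{F_i}$ Cartier and $(-K_{F_i})^{\dim F_i}\leq M$. A direct intersection of the effective divisor $\bigl(\Phi_i'-(1-t_i)B_i'\bigr)|_{F_i}$ against $(-K_{F_i})^{\dim F_i-1}$ then forces
\[
t_i\,M\;\geq\;(\epsilon-\epsilon')\,S_i'|_{F_i}\cdot(-K_{F_i})^{\dim F_i-1}\;\geq\;\frac{\epsilon-\epsilon'}{r^{d-1}},
\]
a contradiction as $t_i\to 0$. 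The essential new input over \cite[Lemma~6.1]{HMX14} is BAB, not a refinement of Global ACC; indeed Global ACC plays no role in the argument.
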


\begin{proof} 
	Assume, to the contrary, that there is a sequence of $d$-dimensional $\epsilon$-lc pairs $(X_i,B_i)$ and log pairs $(X_i, \Phi_i)$ such that 
	$\Phi_i\geq(1-\frac{1}{i})B$ and $K_{X_i}+B_i\equiv K_{X_i}+\Phi_i\equiv 0$, 
	but $(X_i,\Phi_i)$ is not $\epsilon'$-klt, for all integers $i>0$. %\newline
	Replacing $\Phi_i$ by $(1-\lambda_i)B_i+\lambda_i\Phi_i$ for some suitable
	$\lambda_i\in [0,1)$, we can assume that $(X,\Phi_i)$ is $\epsilon'$-lc
	but not $\epsilon'$-klt. %\newline
	
	By \cite[Corollary 1.4.3]{BCHM}, we can take a $\bQ$-factorial birational modification $\phi_i:Y_i\rightarrow X_i$ extracting precisely one irreducible divisor $S_i$ with $a(S_i, X_i,\Phi_i)=\epsilon'$.
	We can write 
	\begin{eqnarray}\label{eq.extraction}
	K_{Y_i}+\phi_{i, \ast}^{-1}\Phi_i + (1-\epsilon')S_i
	=\phi_i^{*}(K_{X_i}+\Phi_i)\equiv 0,
	\end{eqnarray}
	and
	$K_{Y_i}+\phi_{i, \ast}^{-1}B_i+a_i S_i=\phi_i^*({X_i}+B_i)\equiv 0$, 
	where $a_i\leq 1-\epsilon$.
	Now $K_{Y_i}+\phi_{i, \ast}^{-1}\Phi_i $ is not pseudo-effective, so we can 
	run a $(K_{Y_i}+\phi_{i, \ast}^{-1}\Phi_i)$-MMP with scaling of an ample divisor, which ends with a Mori fiber space $\pi_i: W_i \to Z_i$. As this MMP is also a $(-S_i)$-MMP by \eqref{eq.extraction}, $S_i$ dominates $Z_i$.
	
	Denote by $\Phi_i'$, $B_i'$, and $S_i'$ the strict transform of $\phi_{i, \ast}^{-1}\Phi_i$, $\phi_{i, \ast}^{-1}B_i$, and $S_i$ on $W_i$ respectively. Then by construction, $(W_i, \Phi_i' + (1-\epsilon')S_i')$ is an $\epsilon'$-lc log Calabi--Yau pair, $K_{W_i}+B_i' + a_iS_i'\equiv 0$ with $a_i\leq 1-\epsilon$, and $\Phi_i'\geq (1-\frac{1}{i})B_i'$.
	
	Denote by $F_i$ a general fiber of $\pi_i$, then $F_i$ is an  $\epsilon'$-lc Fano variety of dimension $\leq d$. The family of such $F_i$ is bounded by the BAB theorem \cite{Bir16b}, hence there exist positive integers $r$ and $M$ such that $rK_{F_i}$ is Cartier and $(-K_{F_i})^{d_i}\leq M$ where $d_i=\dim F_i$.
	Now consider 
	\begin{align*}
	0\leq {}&\Phi_i'|_{F_i}- \left(1-\frac{1}{i}\right)B_i'|_{F_i}\\
	\equiv {}&-K_{F_i}-(1-\epsilon')S_i'|_{F_i}+\left(1-\frac{1}{i}\right)K_{F_i}+\left(1-\frac{1}{i}\right)a_i S_i'|_{F_i}\\
	\equiv{}&-\frac{1}{i}K_{F_i}-\left(1-\epsilon'-\left(1-\frac{1}{i}\right)a_i\right)S_i'|_{F_i},
	\end{align*}
	we have
	\begin{align*}
	\frac{M}{i}\geq  {}&\frac{1}{i}(-K_{F_i})^{d_i}\\
	\geq  {}&\left(1-\epsilon'-\left(1-\frac{1}{i}\right)a_i\right)S_i'|_{F_i}\cdot (-K_{F_i})^{d_i-1}\\
	\geq {}&(\epsilon-\epsilon')S_i'|_{F_i}\cdot (-K_{F_i})^{d_i-1}\\
	\geq {}&\frac{\epsilon-\epsilon'}{r^{d_i-1}}\geq \frac{\epsilon-\epsilon'}{r^{d-1}}.
	\end{align*}
	Note that as $\frac{\epsilon-\epsilon'}{r^{d-1}}$ is a constant positive number, this is absurd.
\end{proof}

\begin{lem}\label{l.2}
	Fix a positive real number $\epsilon$ and a positive integer $d$. Then there exists a number $M=M(\epsilon, d)$ depending only on $\epsilon$ and $d$, such that if $(X,B)$ is an $\epsilon$-lc log Calabi--Yau pair of dimension $d$, then ${\rm Vol}(X,B)\leq M$.
\end{lem}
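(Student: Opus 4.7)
The plan is to bound $\Vol(X,B)$ by reducing to Birkar's BAB theorem via an MMP argument and induction on $d$. The starting observation is that $K_X+B\equiv 0$ implies $\Vol(X,B) = \Vol(X,-K_X)$, so if $-K_X$ is not big the volume vanishes and we are done. Hence we may assume $-K_X$ (equivalently $B$) is big.

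I would proceed by induction on $d$; the case $d=1$ is elementary as $X\cong \bP^1$ and $\deg B = 2$. For the inductive step, take a small $\bQ$-factorialization, which is available because $\epsilon>0$ forces $(X,B)$ to be klt, by \cite[Corollary 1.4.3]{BCHM}. Then run a $K_X$-MMP with scaling of an ample divisor; since $K_X\equiv -B$ is not pseudo-effective, this terminates at a Mori fiber space $\pi\colon X'\to Z$ by \cite[Corollary 1.3.3]{BCHM}. A section-inclusion argument---using that each divisorial contraction $\phi\colon X\to X_1$ satisfies $\phi^{*}K_{X_1}=K_X-aE$ with $a>0$, hence $H^0(X,-mK_X)\hookrightarrow H^0(X,\phi^{*}(-mK_{X_1}))=H^0(X_1,-mK_{X_1})$---yields $\Vol(X,-K_X)\leq \Vol(X',-K_{X'})$, while flips preserve $\Vol(-K)$ since they are isomorphisms in codimension one. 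Moreover, $(X', B')$ with $B'=\pi_{*}B$ remains an $\epsilon$-lc log Calabi--Yau pair.

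I would then distinguish two cases. If $\dim Z=0$, then $X'$ is itself an $\epsilon$-lc Fano variety of dimension $d$, and Birkar's BAB theorem \cite[Theorem 1.1]{Bir16b} bounds $\Vol(X',-K_{X'})$ in terms of $d$ and $\epsilon$. If $\dim Z\geq 1$, a general fiber $F$ of $\pi$ is an $\epsilon$-lc Fano variety of dimension $\dim F<d$, bounded by BAB; meanwhile the canonical bundle formula together with \cite[Corollary 1.7]{Birkar} endows the base $Z$ with a generalized $\delta$-lc log Calabi--Yau pair structure $(Z,B_Z+M_Z)$ with $\delta=\delta(d,\epsilon)>0$. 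The inductive hypothesis, applied in a form covering generalized pairs, bounds $\Vol(Z,B_Z+M_Z)$, and combining the base and fiber bounds via the Mori fibration (using $\rho(X'/Z)=1$ and $-K_{X'}$ relatively ample) yields the desired bound on $\Vol(X',-K_{X'})$, hence on $\Vol(X,B)$.

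The main obstacle will be the case $\dim Z\geq 1$: one must carefully decompose $-K_{X'}$ numerically relative to $\pi$ in order to combine the separate bounds on the base and on a general fiber into a single global bound, and one must formulate the inductive hypothesis in sufficient generality to cover the generalized pair arising on $Z$ via the canonical bundle formula. The delicate control of singularities on the base provided by \cite[Corollary 1.7]{Birkar} is essential for making the induction go through.
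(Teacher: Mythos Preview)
Your initial reduction to the case where $B$ is big (equivalently $-K_X$ is big) coincides with the paper's proof. At that point, however, the paper simply invokes \cite[Corollary~1.2]{Bir16b}: that result states precisely that if $(X,B)$ is $\epsilon$-lc of dimension $d$ with $K_X+B\sim_\bR 0$ and $B$ big, then $X$ belongs to a bounded family, whence $\Vol(X,B)=\Vol(X,-K_X)$ is bounded. No MMP, no induction, no canonical bundle formula---the entire proof is one line after the reduction.

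Your subsequent detour through a $K_X$-MMP and a Mori fiber space is therefore unnecessary, and it contains a genuine gap in the case $\dim Z\geq 1$. The sentence ``combining the base and fiber bounds via the Mori fibration (using $\rho(X'/Z)=1$ and $-K_{X'}$ relatively ample) yields the desired bound'' is not a proof: there is no general mechanism for bounding $\Vol(X',-K_{X'})$ in terms of $\Vol(F,-K_F)$ and $\Vol(Z,B_Z+M_Z)$ from these hypotheses alone. Indeed, this is exactly the sort of delicate volume comparison that Theorem~\ref{t.ind.step.bdd.mfs} in the paper works hard to establish---under additional hypotheses, and with Lemma~\ref{l.2} as an \emph{input} rather than a consequence. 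You also correctly flag that the inductive hypothesis would need to be reformulated for generalized pairs, but you do not carry this out. The fix is to stop after your first paragraph and apply \cite[Corollary~1.2]{Bir16b} directly; your $\dim Z=0$ case is already essentially this, and the $\dim Z\geq 1$ case never needs to arise.
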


\begin{proof}
	We may assume that ${\rm Vol}(X,B)>0$; otherwise it is clear. Then $B$ is big and 
	this follows from \cite[Corollary 1.2]{Bir16b}.
\end{proof}

\begin{thm}\label{t.ind.step.bdd.mfs}
	Fix a positive real number $\epsilon$ and a positive integer $d$. Then there exists a positive number $k=k(\epsilon,d)$ depending only on  $\epsilon$ and $d$ satisfying the following: if $(X, B)$ is a log pair such that
	\begin{enumerate}
		\item $(X,B)$ is $\epsilon$-lc of dimension $d$,
		\item there exists a contraction of normal varieties $f$: $X\rightarrow Y$ such that $0<\dim Y<d$,
		\item $K_X+B\sim_{\bR} f^*H$ for some very ample divisor $H$ on $Y$, and
		\item $K_Y+H$ is big,
	\end{enumerate}
	then ${\rm Vol}(X,B)\leq k {\rm Vol}(Y, H)$.
\end{thm}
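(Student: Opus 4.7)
The plan is to induct on $d = \dim X$, applying Lemma \ref{lemma volume} to cut $X$ by a general fiber/hyperplane coming from $|f^*H|$, and invoking Birkar's BAB theorem to bound an auxiliary Fano-type volume $\Vol(X,-K_X)$.

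First I would rewrite $\Vol(X,B)=\Vol(X,-K_X+f^*H)$ using $K_X+B\sim_\bR f^*H$. By adjunction, a general fiber $F$ of $f$ (of dimension $n=d-\dim Y$) has $K_F+B|_F\sim_\bR 0$, so $(F,B|_F)$ is $\epsilon$-lc log Calabi--Yau, and Lemma \ref{l.2} gives the fiber bound $\Vol(F,B|_F)\leq M(\epsilon,d-1)$. Next, since $(X,0)$ is also $\epsilon$-lc (relaxing the boundary only improves discrepancies), I would run a $K_X$-MMP when $-K_X$ is big to reach a weak Fano $\epsilon$-lc model, to which Birkar's BAB \cite{Bir16b} applies to produce a uniform bound $\Vol(X,-K_X)\leq V(\epsilon,d)$; the bound is trivial when $-K_X$ is not big.

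For the base case $\dim Y=1$, set $h:=\Vol(Y,H)=\deg H\geq 1$, which follows from $K_Y+H$ big together with very ampleness of $H$. A general fiber $F_0$ of $f$ satisfies $F_0|_{F_0}=0$ and $hF_0\sim_\bR f^*H$, and Lemma \ref{lemma volume} (applied iteratively via the short exact sequence of the effective Cartier divisor $F_0$) gives
\[
\Vol(X,B)=\Vol(X,-K_X+hF_0)\leq \Vol(X,-K_X)+hd\cdot\Vol(F_0,-K_{F_0})\leq V(\epsilon,d)+hd\cdot M(\epsilon,d-1),
\]
which is at most $(V(\epsilon,d)+d\cdot M(\epsilon,d-1))\Vol(Y,H)$. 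For the inductive step $\dim Y=m\geq 2$, I would pick a Bertini-general $H_0\in|H|$ and set $S:=f^*H_0$, a base-point-free normal prime Cartier divisor on $X$. Lemma \ref{lemma volume} with $q=1$ and $D=-K_X$ gives
\[
\Vol(X,B)\leq\Vol(X,-K_X)+d\cdot\Vol(S,-K_X|_S+S|_S),
\]
and adjunction identifies $\Vol(S,-K_X|_S+S|_S)$ with $\Vol(S,B|_S)$, where $K_S+B|_S\sim_\bR(f|_S)^*(2H|_{H_0})$. The restricted data $(S,B|_S,f|_S\colon S\to H_0,2H|_{H_0})$ satisfies the hypotheses of the theorem in dimension $d-1$: $(S,B|_S)$ is $\epsilon$-lc by Bertini, and $K_{H_0}+2H|_{H_0}=(K_Y+3H)|_{H_0}$ is big, because $K_Y+3H=(K_Y+H)+2H$ is a sum of a big and an ample divisor and restricts to a big divisor on a general very-ample hyperplane section. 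The inductive hypothesis then gives $\Vol(S,B|_S)\leq k(\epsilon,d-1)\cdot\Vol(H_0,2H|_{H_0})=2^{m-1}k(\epsilon,d-1)\Vol(Y,H)$, and combining with $\Vol(X,-K_X)\leq V(\epsilon,d)\leq V(\epsilon,d)\Vol(Y,H)$ (using $\Vol(Y,H)\geq 1$) closes the induction for an appropriately large $k(\epsilon,d)$.

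The hard part will be the uniform auxiliary bound $\Vol(X,-K_X)\leq V(\epsilon,d)$, since it rests on Birkar's BAB theorem together with an MMP argument reducing the case of $-K_X$ big but not nef to the weak Fano case. Secondary routine tasks include the Bertini-type checks that $(S,B|_S)$ remains $\epsilon$-lc, the verification that $S=f^*H_0$ is normal and irreducible for general $H_0$, and the adjunction identifications that produce the stated volume identities.
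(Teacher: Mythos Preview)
Your inductive skeleton is close to the paper's: both argue by induction on $d$, cut by a general member of $|f^*H|$, invoke Lemma~\ref{lemma volume}, and feed the restricted pair $(S,B|_S)\to H_0$ with the doubled polarization $2H|_{H_0}$ into the inductive hypothesis. The discrepancy is in how you dispose of the ``leftover'' term after applying Lemma~\ref{lemma volume}.

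You split $\Vol(X,B)=\Vol(X,-K_X+f^*H)\leq \Vol(X,-K_X)+d\cdot\Vol(S,B|_S)$ and then assert a universal bound $\Vol(X,-K_X)\leq V(\epsilon,d)$ coming from BAB. This step is false. A $K_X$-MMP terminates at a Mori fiber space, not at a weak Fano model; even when $-K_X$ is big there is no reason for the output to have $-K$ nef. Concretely, take $X=\bF_n=\bP_{\bP^1}(\OO\oplus\OO(n))$ with $n\geq 3$: $X$ is smooth (hence $\epsilon$-lc for any $\epsilon\leq 1$), $-K_X=2C_0+(n+2)f$ is big, and a Zariski decomposition computation gives $\Vol(X,-K_X)=(n+2)^2/n\to\infty$. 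These $\bF_n$ do fit into the hypotheses of the theorem (with $Y=\bP^1$), but only with $\Vol(Y,H)$ growing with $n$; there is no universal constant $V(\epsilon,d)$, and your final inequality $\Vol(X,-K_X)\leq V(\epsilon,d)\cdot\Vol(Y,H)$, which you obtain by simply multiplying by $\Vol(Y,H)\geq 1$, is not justified.

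The paper sidesteps this entirely. Instead of isolating $\Vol(X,-K_X)$, it uses Lemma~\ref{l.1} (a perturbation lemma powered by BAB on the fibers) together with Ambro's canonical bundle formula and the hypothesis that $K_Y+H$ is big to prove that $\Vol(X,\,tB-2f^*H)=0$ for a fixed $t=t(d,\epsilon,\epsilon/2)>0$. Then Lemma~\ref{lemma volume} gives
\[
t^{d}\Vol(X,B)=\Vol(X,tB)\leq \Vol(X,tB-2f^*H)+2d\cdot\Vol\bigl(f^*H,\,tB|_{f^*H}\bigr)=2d\,t^{d-1}\Vol\bigl(f^*H,\,B|_{f^*H}\bigr),
\]
so the leftover term is literally zero and the induction closes cleanly. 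This is the missing idea in your argument: hypothesis~(4) is used, via the canonical bundle formula, to kill the extra volume term rather than to bound it by an absolute constant.
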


\begin{proof}
	Let $t=t(d,\epsilon,\frac{\epsilon}{2})$ as in Lemma \ref{l.1}. Let $F$ be a general fiber of $f$. We may assume that $H$ is general in its linear system.
	
	First, we claim that ${\rm Vol}(X,tB-2f^*H)=0$. Assume not, then $tB-2f^*H$ is big.
	So there exists an effective $\bR$-divisor $E\sim_{\bR}tB-2f^*H$. Let $\Phi=(1-t)B+E$. Then $\Phi|_F\geq(1-t)B|_F$ and $K_X+\Phi\sim_{\bR} -f^*H$. By Lemma \ref{l.1}, $(X,\Phi)$ is klt over the generic point of $Y$. So by Ambro's canonical bundle formula (see \cite[Lemma 2.14]{DCS}), there are pseudo-effective divisors $B_Y$ and $M_Y$ such that $- f^*H \sim_{\bR} K_X+\Phi \sim_{\bR} f^*(K_Y+B_Y+M_Y)$. This immediately gives a contradiction since $K_Y+H$ is big.
	
	Now by Lemma \ref{lemma volume}, we have
	\[
	0={\rm Vol}(X,tB-2f^*H)\geq {\rm Vol}(X,tB)-2d{\rm Vol}(f^*H,tB|_{f^*H}).
	\]
	Hence, ${\rm Vol}(X,B)\leq 2d t^{-1}{\rm Vol}(f^*H,B|_{f^*H})$.
	
	If $\dim Y=1$, then $f^*H=\sum_{i=1}^hF_i$ where $h=\deg(H)={\rm Vol}(Y, H)$ and for each $i$, $F_i$ is a general fiber of $f$ and hence $(F_i, B|_{F_i})$ is an $\epsilon$-lc log Calabi--Yau pair of dimension $d-1$. Hence by Lemma \ref{l.2}, we have 
	\[
	{\rm Vol}(f^*H,B|_{f^*H})=\sum_{i=1}^h{\rm Vol}(F_i,B|_{F_i})\leq M(\epsilon,d-1){\rm Vol}(Y, H).
	\] 
	Hence we may take $k(\epsilon,d)=2dt^{-1} M(\epsilon,d-1)$   in this case. In particular, this settles the case that $\dim X=2$.
	
	Finally we will use induction on the dimension of $X$ to show the case $\dim Y>1$.
	If $\dim Y>1$, consider the map $f_1=f|_{f^*H}$: $f^*H\rightarrow H$. Then $({f^*H}, B|_{f^*H})$ is $\epsilon$-lc of dimension $d-1$,
	$K_{f^*H}+B|_{f^*H}\sim_\bR 2f^*H|_{f^*H}=2(f_1)^*(H|_{H})$, and $K_H+2H|_H$ is big. Hence by the inductive hypothesis, we have
	\[
	{\rm Vol}(f^*H,B|_{f^*H})\leq k(\epsilon,d-1){\rm Vol}(H,2H|_H)=2^{\dim Y-1}k(\epsilon,d-1){\rm Vol}(Y,H).
	\]
	Taking $k(\epsilon,d):=2^{d}d t^{-1} k(\epsilon,d-1)$, the proof is complete.
\end{proof}

The following theorem can be viewed as a relative version of the Special BAB \cite[Theorem 1.4]{Bir16a}.
\begin{thm}\label{t.bdd.mfs}
	Fix a positive integer $d$ and positive numbers $\epsilon$, $\delta$, and $M$. Then the set of log pairs $(X,B)$ satisfying
	\begin{enumerate}
		\item $(X,B)$ is an  $\epsilon$-lc log Calabi--Yau pair of dimension $d$ such that $K_X$ is $\bQ$-Cartier,
		\item there is a contraction of normal varieties $f$: $X\rightarrow Y$ such that $-K_X$ is ample over $Y$ and $0<\dim Y<d$,
		\item there is a very ample Cartier divisor $H$ with ${\rm Vol}(Y,H)\leq M$, and
		\item the coefficients of $B$ are at least $\delta$
	\end{enumerate}
	forms a log bounded family.
\end{thm}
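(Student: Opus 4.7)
My strategy is to produce, uniformly in the family, an ample Cartier divisor $A$ on $X$ of bounded self-intersection $A^d$, together with a bound on the $A$-degree of $B$, and then invoke boundedness of Chow varieties. First, the base $(Y,H)$ is already bounded: $H^{\dim Y}\leq M$ with $H$ very ample gives an embedding $Y\hookrightarrow\bP^N$ with $N\leq M+d-1$ (via Castelnuovo's bound $h^0(Y,H)\leq H^{\dim Y}+\dim Y$) of degree at most $M$, so $(Y,H)$ belongs to a bounded family by boundedness of the Hilbert scheme. In particular, there is an integer $m=m(d,M)$ such that $K_Y+(m-1)H$ is big for every such $(Y,H)$.

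Next I apply Theorem \ref{t.ind.step.bdd.mfs} to get a volume bound on $X$. Choose $H_0\in|mH|$ very general. By Bertini applied to the base-point-free linear system $|mH|$, $H_0$ is reduced and, choosing it generically, $f^*H_0$ shares no components with $B$, so the pair $(X,B+\tfrac{m-1}{m}f^*H_0)$ is $\epsilon_0$-lc for some $\epsilon_0=\epsilon_0(\epsilon,M,d)>0$. Since
\[
K_X+B+\tfrac{m-1}{m}f^*H_0\sim_{\bR} f^*((m-1)H)
\]
and $K_Y+(m-1)H$ is big, Theorem \ref{t.ind.step.bdd.mfs} yields
\[
\Vol\bigl(X,\,-K_X+(m-1)f^*H\bigr)\;\leq\;k(\epsilon_0,d)\,\Vol\bigl(Y,(m-1)H\bigr)=:V_1,
\]
a constant depending only on $\epsilon,M,d$.

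Then I construct a nef and big divisor $A_n:=-K_X+nf^*H$ for a bounded $n$. For any $f$-vertical extremal ray $R$ of $\overline{NE}(X)$, $A_n\cdot R=-K_X\cdot R>0$ automatically, since $-K_X$ is ample over $Y$ and $f^*H\cdot R=0$. For $f$-horizontal extremal rays, I would pass to the $\epsilon$-klt perturbation $(X,(1-t)B)$ for a small $t=t(\epsilon,\delta,d)>0$, noting that the coefficient lower bound $b_i\geq\delta$ ensures that this perturbed pair retains discrepancies controlled below by a constant; Kawamata's length of extremal rays (Theorem \ref{ext ray}) then yields an extremal rational curve $C$ with $-K_X\cdot C=B\cdot C$ bounded from above, and combined with $f^*H\cdot C\geq 1$ for $f$-horizontal integral curves, one obtains $A_n\cdot R>0$ for all bounded $n\geq n(\epsilon,\delta,M,d)$. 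An effective Matsusaka-type argument then upgrades $A_n$ to a very ample Cartier divisor $NA_n$ with $(NA_n)^d$ bounded in terms of $N$, $n$ and $V_1$. Finally, $B\equiv -K_X$ gives
\[
A_n^{d-1}\cdot B\;=\;A_n^{d-1}\cdot(-K_X)\;=\;A_n^d-n A_n^{d-1}\cdot f^*H\;\leq\;A_n^d,
\]
so the total $A_n$-degree of $B$ is bounded; the coefficient lower bound $b_i\geq\delta$ implies that the number of components of $B$ and each $A_n^{d-1}\cdot B^i$ are bounded. Embedding $X\hookrightarrow\bP^M$ via $|NA_n|$ and appealing to standard boundedness of Chow varieties then yields log boundedness of the family $\{(X,B)\}$.

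The hardest step is bounding the integer $n$: ruling out $f$-horizontal extremal rays with arbitrarily negative $-K_X$-intersection. This is precisely where the coefficient hypothesis $b_i\geq\delta$ must play an essential role, analogously to its role in Birkar's Special BAB \cite[Theorem 1.4]{Bir16a}, by preventing the perturbed pair $(X,(1-t)B)$ from acquiring worsening singularities as $t$ shrinks, so that Theorem \ref{ext ray} remains applicable with effective bounds. Turning the volume bound of Step 2 into the nefness bound of Step 3 may well require a relative theory of complements or a relative BAB-style argument, and this is where the main technical work resides.
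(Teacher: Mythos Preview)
Your Step~3 contains a real gap, and the perturbation you suggest points the wrong way. With $(X,(1-t)B)$ you have $K_X+(1-t)B\equiv tK_X$, so the cone theorem splits $\overline{NE}(X)$ into the region $\{K_X\cdot\alpha\geq 0\}$ and finitely many $K_X$-negative extremal rays. Kawamata's bound indeed handles the latter (giving $-K_X\cdot C=B\cdot C\in(0,2d/t]$, hence $A_n\cdot C>0$ for any $n\geq 0$), but on the region $\{K_X\cdot\alpha\geq 0\}$ you have $-K_X\cdot\alpha\leq 0$ and there is \emph{no} bound on how negative $-K_X\cdot\alpha=B\cdot\alpha$ can be relative to $f^*H\cdot\alpha$: a component of $B$ with very negative normal bundle can push $B\cdot\alpha$ arbitrarily low. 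The perturbation that puts the ``easy'' half of the cone in the right place is $(X,(1+\lambda)B)$, as in Lemma~\ref{lemma ample}; but securing a uniform $\lambda>0$ with $(X,(1+\lambda)B)$ klt (or with $f$-vertical non-lc locus, so that Theorem~\ref{ext ray2} applies) is exactly Conjecture~\ref{conj LCT higher}, open in dimension $\geq 3$. Note also that the hypothesis $b_i\geq\delta$ plays no role in your sketch: decreasing the boundary only improves discrepancies, so $(X,(1-t)B)$ is $\epsilon$-klt regardless of $\delta$.

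The paper's argument bypasses the construction of an ample divisor on $X$ entirely. After replacing $H$ by a bounded multiple so that $K_Y+\delta H$ is big, one fixes $n>1/\epsilon$ and picks $H'\sim_{\bQ}f^*H$ general with all coefficients equal to $1/n$, so that $(X,B+H')$ remains $\epsilon$-lc. For $s>0$ small (depending on $X$), the pair $(X,(1+s)B+H')$ is $\frac{\epsilon}{2}$-lc and $K_X+(1+s)B+H'\equiv -sK_X+f^*H$ is ample. On a log resolution $\pi\colon\tilde X\to X$ one has the chain
\[
0<\Vol\bigl(K_X+(1+s)B+H'\bigr)\leq\Vol\bigl(K_{\tilde X}+\lceil\tilde B\rceil+\tilde H+E\bigr)\leq\Vol\bigl(K_X+\lceil B\rceil+H'\bigr)\leq\Vol\Bigl(\tfrac{1}{\delta}B+H'\Bigr),
\]
where the last inequality is precisely where $b_i\geq\delta$ enters, and the right-hand side is bounded by Theorem~\ref{t.ind.step.bdd.mfs} applied to $(X,B+H')$ with $K_X+B+H'\sim_{\bR}f^*H$. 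Thus $(\tilde X,\lceil\tilde B\rceil+\tilde H+E)$ is lc with big log canonical class of bounded volume and coefficients in the fixed finite set $\{1/n,1\}$; \cite[Theorem~3.1]{HMX13} and \cite[Theorem~1.3]{HMX14} then give log birational boundedness, and \cite[Theorem~1.6]{HMX14} upgrades the $\frac{\epsilon}{2}$-lc family $\{(X,(1+s)B+H')\}$ to a log bounded one.
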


\begin{proof}We may assume that $\delta<1$. Fix a positive integer $n>\frac{1}{\epsilon}$.
	After replacing $H$ by a fixed multiple depending only on $\dim Y$, we may assume that $K_Y+\delta H$ is big, cf. \cite[Lemma 2.3.4(2)]{HMX13}. 
	We will always consider $H$ as a general member of its linear system. 
	As $f^*H$ is base-point free, we can find an effective $\bQ$-divisor $H'\sim_{\mathbb{Q}}f^*H$ with all coefficients equal to $\frac{1}{n}$ such that $(X,B+H')$ is still $\epsilon$-lc. Let $\pi: \tilde{X}\to X$ 
	be a log resolution of $(X,B+H')$ and write 
	\[
	K_{\tilde{X}}+\tilde{B}+\tilde{H}+E=\pi^*(K_X+B+H')+\sum a_iE_i,
	\]
	where $\tilde{B}$ and $\tilde{H}$ are the strict transform of $B$ and $H$ respectively,
	$E_i$ are prime $\pi$-exceptional divisors and $E=\sum E_i$. 
	We can choose a sufficiently small positive number  $s$ (depending on $X$) such that $(X,(1+s)B+H')$ is $\frac{\epsilon}{2}$-lc and $K_X+(1+s)B+H'\equiv -sK_X+H'$ is ample. Note that here we need the assumption that $K_X$ is $\bQ$-Cartier. 
	So
	\begin{align*}
	0&<{\rm Vol}(K_X+(1+s)B+H')\leq {\rm Vol}(K_{\tilde{X}}+\roundup{\tilde{B}}+\tilde{H}+E) \\
	&\leq  {\rm Vol}(K_{X}+\roundup{{B}}+H')\leq {\rm Vol}\left(\frac{1}{\delta}B+H'\right)\\
	&= \frac{1}{\delta^d}{\rm Vol}\left(B+\delta H'\right)<\frac{1}{\delta^d}{\rm Vol}\left(B+ H'\right)
	%&\leq  \left(\frac{1}{\delta}\right)^n{\rm Vol}(X,B)+n{\rm Vol}\left(H',\frac{1}{\delta}B|_{H'}+H'|_{H'}\right).
	\end{align*}
	Note that $K_X+B+ H'\sim_\bR f^*( H)$, hence $\frac{1}{\delta^d}{\rm Vol}\left(B+ H'\right)$ is bounded from above by Theorem \ref{t.ind.step.bdd.mfs}. %Lemma \ref{l.2}.
	
	In summary, $(\tilde{X}, \roundup{\tilde{B}}+\tilde{H}+E)$ is an lc pair such that $K_{\tilde{X}}+\roundup{\tilde{B}}+\tilde{H}+E$ is big with bounded volume and the coefficients of $\roundup{\tilde{B}}+\tilde{H}+E$ are in the fixed finite set $\{\frac{1}{n}, 1\}$ independent of $X$. The set of such pairs forms a log birationally bounded set by \cite[Lemma 2.3.4(2), Theorem 3.1]{HMX13} and \cite[Theorem 1.3]{HMX14}. %\textbf{need some more details} 
	Therefore, the set $\{(X,(1+s)B+H')\}$ is also log birationally bounded, and hence log bounded by \cite[Theorem 1.6]{HMX14}. So the set $\{(X,B)\}$ is log bounded.
\end{proof} 
It would be interesting to ask whether Theorem \ref{t.bdd.mfs} still holds true if we relax the condition ``$-K_X$ is ample over $Y$'' to ``$-K_X$ is big over $Y$''. Here, as a corollary, we can prove a weak version, namely, $(X,B)$ forms a log bounded family  modulo flops.
\begin{cor}\label{cor.bdd.mfs}
	Fix a positive integer $d$ and positive numbers $\epsilon$, $\delta$, and $M$. 
	Then the set of log pairs $(X,B)$ satisfying
	\begin{enumerate}
		\item $(X,B)$ is an  $\epsilon$-lc log Calabi--Yau pair of dimension $d$,
		\item there is a contraction of normal varieties $f\colon X\rightarrow Y$ 
		such that $-K_X$ is big over $Y$ and $0<\dim Y<d$,
		\item there is a very ample Cartier divisor $H$ with ${\rm Vol}(Y,H)\leq M$, and
		\item the coefficients of $B$ are at least $\delta$
	\end{enumerate}
	forms a log bounded family modulo flops.
\end{cor}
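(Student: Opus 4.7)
The plan is to reduce this corollary to Theorem \ref{t.bdd.mfs}, which handles the stronger hypothesis that $-K_X$ is ample over $Y$. For each pair $(X,B)$ in the set, I would construct a birational model $(X'',B'')$ that is isomorphic to $(X,B)$ in codimension one and satisfies all the hypotheses of Theorem \ref{t.bdd.mfs}; applying that theorem to $\{(X'',B'')\}$ will give log boundedness of the latter family, which by the codimension-one isomorphism translates to log boundedness modulo flops for the original family.

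\textbf{Construction of the birational model.} First I would take a small $\bQ$-factorialization $\pi\colon X'\to X$ via \cite[Corollary 1.4.3]{BCHM}, and set $B':=\pi_*^{-1}B$. Then $(X',B')$ is $\bQ$-factorial and $\epsilon$-lc, $K_{X'}+B'\sim_{\bR,Y}0$ by \cite{G}, the coefficients of $B'$ remain at least $\delta$, and $-K_{X'}\sim_{\bR,Y}B'$ is big and effective over $Y$. Decomposing $B'\sim_{\bR,Y}A+E$ with $A$ ample over $Y$ and $E\geq 0$ effective, I would choose $t\in(0,1)$ small enough that $\Delta_t:=(1-t)B'+tE$ yields a $\frac{\epsilon}{2}$-lc pair $(X',\Delta_t)$; then $K_{X'}+\Delta_t\sim_{\bR,Y}-tA$ is anti-ample over $Y$, so $(X',\Delta_t)$ is a klt pair with $-(K_{X'}+\Delta_t)$ ample over $Y$, hence $X'$ is a relative Mori dream space over $Y$ by \cite{BCHM}. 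Within the Mori chamber decomposition of $N^1(X'/Y)$, I would produce a small $\bQ$-factorial modification $\sigma\colon X'\dashrightarrow X''$ over $Y$ such that $-K_{X''}$ lies in the interior of the relative nef cone of $X''/Y$ and is therefore ample over $Y$. Setting $B'':=\sigma_*B'$, the resulting pair $(X'',B'')$ is $\frac{\epsilon}{2}$-lc klt log Calabi--Yau with $K_{X''}$ $\bQ$-Cartier, $-K_{X''}$ ample over $Y$, and coefficients of $B''$ still at least $\delta$, so Theorem \ref{t.bdd.mfs} (with $\epsilon/2$ in place of $\epsilon$) applies and yields the desired log boundedness.

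\textbf{Main obstacle.} The key technical step is proving the existence of the SQM $X''$ above, which amounts to showing that the class of $-K_{X'}\sim_Y B'$ lies in the interior of the movable cone of $X'/Y$ so that it can be moved into the ample chamber of some small $\bQ$-factorial modification using only flops (no divisorial contractions). Any prime divisor $D$ that would be contracted in forming the relative anti-canonical model must satisfy that $B'|_D$ is not relatively big over the image $f(D)\subseteq Y$; combined with the relation $B'\sim_{\bR,Y}-K_{X'}$, the klt log Calabi--Yau structure $K_{X'}+B'\equiv_Y 0$, and the lower bound $\delta$ on the coefficients of $B'$, such a $D$ should lead to a contradiction with the Fano-type structure of $(X',\Delta_t)$ over $Y$. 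Making this movability argument precise, possibly via a careful analysis of the Zariski decomposition of $B'$ over $Y$ together with the finite generation guaranteed by \cite{BCHM}, is the technical heart of the reduction; once established, the remainder of the argument is a direct application of Theorem \ref{t.bdd.mfs}.
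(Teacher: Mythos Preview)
Your reduction to Theorem \ref{t.bdd.mfs} is the right idea, but the ``main obstacle'' you flag is a genuine obstruction, not a technicality: in general there is \emph{no} small $\bQ$-factorial modification $X''$ of $X'$ over $Y$ on which $-K_{X''}$ becomes ample. Equivalently, $B'\equiv_Y -K_{X'}$ need not lie in the interior of the relative movable cone. To see this, start from any $(X',B')$ satisfying the hypotheses of Theorem \ref{t.bdd.mfs} such that $X'$ is not terminal along some subvariety disjoint from $\Supp B'$; pick a prime divisor $E$ over $X'$ with $a(E,X',0)<1$ and extract it via $g\colon X\to X'$. Setting $B=g_*^{-1}B'+(1-a(E,X',B'))E$ gives an $\epsilon$-lc log Calabi--Yau pair with $-K_X$ big over $Y$ and coefficients bounded below. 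For a curve $C\subset E$ contracted by $g$ one has $K_X\cdot C=(a(E,X',0)-1)\,E\cdot C>0$, hence $B\cdot C=-K_X\cdot C<0$; thus $E$ is covered by $B$-negative curves and any $B$-MMP over $Y$ must contract $E$ divisorially. No SQM of $X$ makes $-K$ relatively ample, and your proposed argument cannot be completed.

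The paper proceeds differently: after $\bQ$-factorializing, it runs a $(K_X+(1+t)B)$-MMP over $Y$ for $0<t\ll 1$ (equivalently a $B$-MMP) all the way to the relative log canonical model $X'\to Y$, allowing divisorial contractions. On $X'$ one has $-K_{X'}\equiv B'$ ample over $Y$, so Theorem \ref{t.bdd.mfs} gives log boundedness of $(X',B')$. The crucial extra step you are missing is Proposition \ref{p.blow.ups.fam}: since every prime divisor $E$ on $X$ exceptional over $X'$ satisfies $a(E,X',B')=a(E,X,B)\leq a(E,X,0)=1$, that proposition lets one re-extract these divisors inside a bounded family, yielding models isomorphic to $X$ in codimension one. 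This is exactly what upgrades log boundedness of $(X',B')$ to log boundedness modulo flops of $(X,B)$.
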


\begin{proof}We may replace $X$ by its $\bQ$-factorialization and assume that $X$ is $\bQ$-factorial.
	Since $(X, B)$ is an $\epsilon$-lc pair, 
	there exists a sufficiently small $t>0$ such that 
	$(X, (1+t)B)$ is also klt. Since $B$ is big over $Y$, 
	we may run a $(K_X+(1+t)B)$-MMP over $Y$ 
	(which is also a $B$-MMP over $Y$) with scaling 
	of an ample divisor, and finally reach a relative 
	log canonical model $f' \colon X' \to Y$. Denote by 
	$B'$ the strict transform of $B$.
	It follows that $-K_{X'}\equiv B'$ is ample over $Y$. 
	Also note that $(X', B')$ is again an 
	$\epsilon$-lc log Calabi--Yau pair  
	and the coefficients of $B'$ are at least $\delta$. 
	Then, by Theorem \ref{t.bdd.mfs}, $(X', B')$ belongs to a 
	log bounded family. The conclusion then follows from 
	Proposition \ref{p.blow.ups.fam}, as 
	for any prime divisor $E$ on $X$ which is exceptional over $X'$,
	we have
	\[
	a(E, X', B')=a(E, X, B)\leq a(E, X, 0)=1,
	\]
	where the first equality follows from the fact that $(X, B)$
	and $(X', B')$ are crepant birational log Calabi--Yau pairs.
\end{proof}

\begin{proof}[Proof of Theorem \ref{t.bdd.lcy3fold}]
	We follow the strategy of \cite{DCS}. We may replace $X$ by its $\bQ$-factorialization and assume that $X$ is $\bQ$-factorial.
	Since $K_X+B \equiv 0$ and $B>0$, we can run a $K_X$-MMP 
	with scaling of an ample divisor
	which ends with a Mori fiber space $f:Y \to Z$ with general fiber $F$.
	Denote by $B_{Y}$ the strict transform of $B$.
	Since $B>0, \; K_X+B \equiv 0$, and we are running a $K_X$-MMP, 
	it follows that $B_{Y}>0$. Also note that $(Y, B_Y)$ is again 
	an $\epsilon$-lc log Calabi--Yau pair, $Y$ is rationally connected, 
	and the coefficients of $B_Y$ are at least $\delta$. 
	
	If $\dim Z =0$, then $Y$ is an $\epsilon$-lc log Fano variety, hence it is 
	bounded by \cite[Theorem 1.1]{Bir16b}. It is then easy to show that the support
	of $B_{Y}$ is also bounded, since there exist positive integers $r$ and 
	$M$ such that $-rK_Y$ is very ample and $(-K_Y)^d\leq M$, and therefore
	\[
	\Supp(B_Y)\cdot (-rK_Y)^{d-1}\leq \frac{1}{\delta}B_Y\cdot(-rK_Y)^{d-1}
	\leq \frac{r^{d-1}M}{\delta}.
	\]
	Hence $(Y, B_Y)$ is log bounded.
	
	If $\dim Z >0$, by Ambro's canonical bundle formula  \cite[Theorem 3.1]{FG}, $Z$
	is naturally endowed with a log Calabi--Yau structure, that is, there exists
	an effective $\bR$-divisor $\Gamma$ on $Z$ such that $(Z, \Gamma)$ is klt 
	and $K_Z+\Gamma \equiv 0$.
	Denote by $F$ a general fiber of $f$. Then $F$ is an $\epsilon$-lc log Fano variety of 
	dimension is at most $d$, $K_F+B_Y|_F\equiv0$ and the coefficients of $B_Y|_F$ are at 
	least $\delta$. Again by \cite[Theorem 1.1]{Bir16b}, $(F, B_Y|_F)$ is log bounded. Hence by 
	\cite[Theorem 1.4]{Birkar}, $\Gamma$ can be chosen so that 
	$(Z, \Gamma)$ is $\epsilon'$-lc for some $\epsilon'$ which only depends on 
	$\epsilon$ and $d$. In particular, by Conjecture \ref{conj3}, it follows that $Z$
	belongs to a bounded family since $Z$ is rationally connected.
	By Theorem \ref{t.bdd.mfs}, $(Y, B_Y)$ is log bounded.
	
	In summary, $(Y, B_Y)$ belongs to a log bounded family.
	For any prime divisor $E$ on $X$ which is exceptional over $Y$,
	we have
	\[
	a(E, Y, B_Y)=a(E, X, B)\leq a(E, X, 0)=1.
	\]
	The conclusion then follows from Proposition \ref{p.blow.ups.fam}.
\end{proof}

The following proposition is a generalization of \cite[Proposition 2.5]{HX15}.

\begin{prop}\label{p.blow.ups.fam}
	Fix a positive real number $\epsilon$ and a positive integer $d$.
	Let $\mathcal{D}$ be a log bounded family of log pairs such 
	that any $(X, B) \in \mathcal{D}$ is a $d$-dimensional $\epsilon$-lc pair.
	\\
	Then there exist finitely many quasi-projective normal $\bQ$-factorial varieties
	$\mathcal{Y}_i$, a reduced divisor $\mathcal{F}_i$ on $\mathcal Y_i$, 
	and a projective morphism $\mathcal{Y}_i\to T_i$, where $T_i$ is a 
	normal variety and $\mathcal{F}_i$ does not contain 
	any fiber, such that for any $(X, B)\in \mathcal{D}$, and any set of 
	divisors $\{E_{j}\}$ exceptional over $X$ such that the log discrepancy 
	$a(E_{j},X,B)\leq 1$, there exists an index $i$, a closed point 
	$t\in T_i$, and a birational morphism $\mu _t : \mathcal Y_{i, t} \to X$ 
	which extracts precisely the divisors $\{E_{j}\}$, and  $\mathcal{F}_{i,t}$ 
	coincides with the support of strict transform of $B$ and every $E_i$.
\end{prop}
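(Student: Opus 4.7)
The plan is to adapt the strategy of \cite[Proposition 2.5]{HX15}, combining Noetherian stratification of the bounding family with a common log resolution in families, and using the $\epsilon$-lc hypothesis to control divisors of low log discrepancy.

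First, let $h\colon (\mathcal{Z}, \mathcal{E}) \to S$ be a morphism witnessing the log boundedness of $\mathcal{D}$. By Noetherian induction, I stratify $S$ into finitely many locally closed subsets and may assume that $S$ is smooth and irreducible. Using \cite[Corollary 1.4.3]{BCHM} after shrinking $S$, I take a $\bQ$-factorialization in family $\mathcal{Z}' \to \mathcal{Z}$ and then a simultaneous log resolution $\pi\colon \mathcal{W} \to \mathcal{Z}'$ such that, after a further stratification, $\pi_s\colon \mathcal{W}_s \to \mathcal{Z}'_s$ is a log resolution of $(\mathcal{Z}'_s, \mathcal{E}'_s)$ for every $s \in S$. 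Writing $K_{\mathcal{W}} + \mathcal{B}_{\mathcal{W}} = \pi^{*}(K_{\mathcal{Z}'}+\mathcal{E}')$, the pair $(\mathcal{W}, \mathcal{B}_{\mathcal{W}})$ is relatively log smooth with all coefficients of $\mathcal{B}_{\mathcal{W}}$ in $[0, 1-\epsilon]$ by the $\epsilon$-lc hypothesis on $\mathcal{D}$.

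Next, for a pair $(X, B) = (\mathcal{Z}_s, \mathcal{E}_s)$, via the birational identification coming from log boundedness, and a set of divisors $\{E_j\}$ exceptional over $X$ with $a(E_j, X, B) \leq 1$, I split the $E_j$'s into the $\pi_s$-exceptional ones (of which there are finitely many, with bounded combinatorial type uniformly over $S$) and those lying strictly above $\mathcal{W}_s$. For the latter, log smoothness of $(\mathcal{W}_s, \mathcal{B}_{\mathcal{W}, s})$ implies that each such $E_j$ is a monomial (toroidal) valuation centered on a stratum of $\mathcal{B}_{\mathcal{W}, s}$; the constraint $a(E_j, \mathcal{W}_s, \mathcal{B}_{\mathcal{W}, s}) \leq 1$ together with the upper bound $1-\epsilon$ on coefficients forces the monomial weights to be bounded above by a function of $\epsilon$ alone, so there are only finitely many such $E_j$ per fiber, with combinatorial type constant on a finite refinement of the stratification of $S$.

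Finally, for each stratum and each (finite) choice of subset of these divisors, I construct the corresponding family $\mathcal{Y}_i \to T_i$ by performing the corresponding toroidal blowups of $\mathcal{B}_{\mathcal{W}}$ in family and then running a relative MMP over $\mathcal{Z}$ to contract all new exceptional divisors except those in the chosen subset; the MMP exists and terminates by \cite{BCHM}. The output is a $\bQ$-factorial variety fiberwise extracting precisely the chosen $\{E_j\}$, and $\mathcal{F}_i$ is taken to be the union of the strict transform of $\mathcal{E}$ and the extracted divisors; the Negativity Lemma guarantees that $\mu_t^{-1}B$ is supported in $\mathcal{F}_{i,t}$. The main obstacle I expect is the second step: the uniform finiteness and bounded combinatorial complexity of divisors with log discrepancy $\leq 1$ across the family. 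This relies essentially on the $\epsilon$-lc assumption, without which weighted blowups of arbitrarily large weight would give infinitely many such divisors, while assembling the stratifications and MMPs into the finite collection of families $\mathcal{Y}_i \to T_i$ is a more routine exercise in Noetherian bookkeeping.
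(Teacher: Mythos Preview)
Your overall architecture matches the paper's: stratify the bounding family, pass to a simultaneous log resolution, use the $\epsilon$-lc hypothesis to bound the divisors of log discrepancy $\leq 1$, and then run MMPs to extract chosen subsets. However, there are two real problems in the details.

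First, your boundary $\mathcal{B}_{\mathcal{W}}$ is set up incorrectly. You write $K_{\mathcal{W}}+\mathcal{B}_{\mathcal{W}}=\pi^*(K_{\mathcal{Z}'}+\mathcal{E}')$ with $\mathcal{E}'$ reduced, and then claim the coefficients of $\mathcal{B}_{\mathcal{W}}$ lie in $[0,1-\epsilon]$. But the $\epsilon$-lc hypothesis is on $(X,B)$, not on $(X,\Supp B)$; pulling back the reduced divisor $\mathcal{E}'$ gives no such bound. The paper instead works with $(\mathcal{Z}',(1-\epsilon)\mathcal{E}')$, which is klt since the resolution is log smooth, and every divisor $E$ with $a(E,X,B)\leq 1$ then automatically satisfies $a(E,\mathcal{Z}'_t,(1-\epsilon)\mathcal{E}'_t)\leq 1$ because $\Delta'_t\leq (1-\epsilon)\mathcal{E}'_t$.

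Second, and more seriously, your final step ``run a relative MMP over $\mathcal{Z}$ to contract all new exceptional divisors except those in the chosen subset'' hides the main difficulty. To make such an MMP contract exactly the unwanted divisors, the standard device is to take the crepant coefficients $b_k$ coming from $(X,B)$ on the divisors to be kept and raise the coefficients on the rest to $1-\epsilon/2$. But the $b_k$ depend on $B$, which varies over $\mathcal{D}$; a single MMP per subset does not obviously suffice. The paper resolves this not by fixing one boundary per subset but by observing that all the relevant boundaries lie in a fixed rational polytope $P=\{\sum h_k\mathcal{E}''_k : 0\leq h_k\leq 1-\epsilon/2\}$ and invoking finiteness of models \cite[Corollary 1.1.5]{BCHM} over $\mathcal{Z}$ for this polytope. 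This yields finitely many $\mathcal{Y}_i$ that serve as minimal models for every boundary in $P$, hence for every $(X,B)$ and every admissible choice of $\{E_j\}$ simultaneously. Your proposal does not supply this step, and without it the passage from ``finitely many subsets'' to ``finitely many families $\mathcal{Y}_i$'' is not justified.
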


\begin{proof}
	By definition, there is a  quasi-projective scheme $\mathcal{Z}$, a reduced divisor $\mathcal{E}$ on $\mathcal Z$, and a projective morphism $h: \mathcal{Z}\to T$, where $T$ is of finite type and $\mathcal{E}$ does not contain any fiber, such that
	for every $(X, B)\in \mathcal{D}$, there is a closed point $t \in T$ and an isomorphism $f : \mathcal{Z}_t \to X$
	such that $\mathcal{E}_t$  coincides with the support of $f_*^{-1}B$.
	
	We may assume that $T$ is reduced.  Blowing up $(\mathcal{Z}, \mathcal{E})$ and $T$ and decomposing $T$ into a finite union of
	locally closed subsets, we may assume that there exists a pair $(\mathcal{Z}', \mathcal{E}')$ that has simple normal crossings support with the following diagram:
	\begin{displaymath}
	\xymatrix{
		(\mathcal{Z}', \mathcal{E}') \ar[rr] \ar[dr] & & (\mathcal{Z}, \mathcal{E}) \ar[dl]\\
		&T &
	}
	\end{displaymath}
	Passing to
	an open dense subset of $T$, we may assume that the fibers of $(\mathcal{Z}',  \mathcal{E}') \to T$ are log smooth pairs, passing to a finite cover of $T$, we may
	assume that every stratum of $(\mathcal{Z}',  \mathcal{E}')$ has irreducible fibers over $T$; decomposing $T$ into
	a finite union of locally closed subsets, we may assume that $T$ is smooth; finally
	passing to a connected component of $T$, we may assume that $T$ is integral.
	Note that all these operations will still yield a bounded family, thanks to Noetherian induction.
	
	As $( \mathcal{Z}',(1-\epsilon)\mathcal{E}')$ is klt, it follows that there are only finitely many valuations of log
	discrepancy at most one with respect to $( \mathcal{Z}',(1-\epsilon)\mathcal{E}')$.  
	As $( \mathcal{Z}',(1-\epsilon)\mathcal{E}')$ has simple normal crossings over $T$, there is a sequence of blow ups $\phi: \mathcal{Z}'' \to \mathcal{Z}'$ of strata of $\mathcal{E}'$ extracting all and only the exceptional valuations of log discrepancy at most one for $( \mathcal{Z}',(1-\epsilon)\mathcal{E}')$.  
	Note also that as $( \mathcal{Z}',(1-\epsilon)\mathcal{E}')$ has simple normal
	crossings over $T$, it follows that if $t\in T$ is a closed point then the center on $\mathcal{Z}''_t$ of any valuation of log discrepancy at most one with respect to $( \mathcal{Z}'_t,(1-\epsilon)\mathcal{E}'_t)$ has codimension 1.
	%\footnote{check. R: seems good to me.}
	%it follows that if $t\in T$ is a closed point then every valuation of log discrepancy at most one with respect to $( \mathcal{Z}'_t,(1-\epsilon)\mathcal{E}'_t)$ has center on a divisor on $\mathcal{Z}''_t$.
	
	Let us denote by $\mathcal{E}''$ the support of the strict transform of $\mathcal{E}'$ and all $\phi$-exceptional divisors on $\mathcal{Z}''$. 
	Let us note that $K_{\mathcal{Z}''}\sim_{\bQ} \mathcal{A}+ \mathcal{C}$, where $ \mathcal{A}$ is a general $\bQ$-Cartier divisor ample  over $\mathcal{Z}$, and $\mathcal{C}$ is an effective $\bQ$-divisor. 
	Moreover, we can find a positive real number $\lambda$ such that 
	$
	(\mathcal{Z}'', (1+\lambda)(1-\frac{\epsilon}{2})\mathcal{E}''+\lambda(\mathcal{A}+ \mathcal{C}))
	$
	is still klt.
	
	Let $\mathcal{E}''_k$ for $1\leq k\leq m$ be the components of $\mathcal{E}''$. We may assume that $\mathcal{E}''_k$ for $1\leq k\leq r$ are the components which are the strict transform of components of $\mathcal{E}$ on $\mathcal{Z}''$. 
	%Let $V \subset \mathrm{Div}_\mathbb{R}( \mathcal{Z}'')$
	%be the span of the $B''_i$ inside the (infinite dimensional) vector space of $\mathbb{R}$-Weil divisors on $ \mathcal{Z}''$. Inside $V$ 
	Consider the polytope $$P=\left\{\sum_{k=1}^m h_k  \mathcal{E}''_k \; \mid
	\;  0\leq h_k \leq 1-\frac{\epsilon}{2} \; \text{for }1\leq  k\leq m\right\}.$$
	Note that for any element $\Phi$ of $P$, 
	a $(K_{ \mathcal{Z}''} + \Phi)$-minimal model over $\mathcal{Z}$ is the same as a  $(K_{ \mathcal{Z}''} + (1+\lambda)\Phi+\lambda(\mathcal{A}+ \mathcal{C}))$-minimal model over $\mathcal{Z}$. Hence by the finiteness of models \cite[Corollary 1.1.5]{BCHM}, 
	there are just finitely many possible minimal models over $\mathcal{Z}$ for this polytope $P$, that is, there exist finitely many quasi-projective normal $\bQ$-factorial varieties $\mathcal{Y}_i\to T$,  such that for any element $\Phi$ of $P$, there exists an index $i$ and a closed point $t\in T$ such that $\mathcal{Y}_{i,t}$ is a $(K_{ \mathcal{Z}''} + \Phi)$-minimal model over $\mathcal{Z}$.
	Denote $\mathcal{F}_i$ to be the strict transform of  $\mathcal{E}''$ on $\mathcal{Y}_i$.
	
	%For an element $\sum_{l=1}^r h_l \mathcal{E}''_l$ of $P$ run the the $K_{ \mathcal{Z}''} + \sum_{l=1}^r h_l \mathcal{E}''_l$-MMP
	%over $Z$. This run of the MMP terminates with a relative minimal model, which is also
	%a minimal model for any of the fibers for the $K_{ \mathcal{Z}''_t} + \sum_{l=1}^r h_l \mathcal{E}''_{t, l}$.
	
	%By the finiteness of models, \cite[Corollary 1.1.5]{BCHM}, 
	%there are just finitely many possible minimal models.
	
	We now claim that such $(\mathcal{Y}_i, \mathcal{F}_i)$ are what we need.
	For $(X,B)\in \mathfrak{D}$, there is a closed point $t\in T$ and a
	birational map $f''\colon  \mathcal{Z}''_t \to X$ such that the support of $\mathcal{E}''_t$ contains the support
	of the strict transform of $B$ and all $f''^{-1}$-exceptional divisors.  
	As $(X, B)$ is $\epsilon$-lc, we have 
	\[
	K_{ \mathcal{Z}'_t}+\Delta'_t=f'^\ast(K_X+B),%\; \text{ and} \; \Phi_t \geq F_t
	\]
	\[
	K_{ \mathcal{Z}''_t}+\Delta''_t=f''^\ast(K_X+B)%\; \text{ and} \; \Phi_t \geq F_t
	\]
	where $f':\mathcal{Z}'_t\to X$, $\Delta'_t\leq (1-\epsilon)\mathcal{E}'_t$ and $\Delta''_t\leq (1-\epsilon)\mathcal{E}''_t$.
	Now, we consider the set $\{E_j\}$ of all divisors which are exceptional over $X$ and such that $a(E_j, X, B)\leq 1$, then 
	\[
	1\geq a(E_j, X, B)=a(E_j,\mathcal{Z}'_t, \Delta'_t)\geq a(E_j,\mathcal{Z}'_t, (1-\epsilon)\mathcal{E}'_t).
	\]
	By the construction of $\mathcal{Z}''$, each $E_j$ is realized as a divisor center on $\mathcal{Z}''_t$ and it appears as a component of $\mathcal{E}''_t$. 
	We may assume that $\mathcal{E}''_{k, t}$ for $r<k\leq n$ correspond to the divisors $E_j$ on $\mathcal{Z}''_t$. 
	Now, we may write 
	$$
	\Delta''_t=\sum_{k=1}^m b_k\mathcal{E}''_{k}
	$$
	where $b_k\in [0, 1-\epsilon]$ for $1\leq k\leq n$ and $b_k\leq 1-\epsilon$ for $k>n$.
	%For $r<k\leq n$, denote 
	%$$
	%a_k=a(\mathcal{E}''_{k, t}, X, B)=1-{\rm coeff}_{\mathcal{E}''_{k, t}} \Delta''_t\in (\epsilon,1].
	%$$
	%Note that  $f''^{-1}_*B$ can be written as $\sum_{k=1}^rb_k\mathcal{E}''_{k, t}$ for $ 0\leq b_k\leq 1-\epsilon$ and $1\leq k\leq r$. Now
	We can consider the boundary
	$$
	\Phi=\sum_{k=1}^n b_k\mathcal{E}''_{k}+\sum_{n<k\leq m}\left(1-\frac{\epsilon}{2}\right)\mathcal{E}''_{k}\in P
	$$
	on $\mathcal{Z}''$. Then by construction, there exists an index $i$ such that $\mathcal{Y}_i$ is a $(K_{\mathcal{Z}''}+\Phi)$-minimal model over $\mathcal{Z}$, and hence $\mathcal{Y}_{i,t}$ is a $(K_{\mathcal{Z}''_t}+\Phi_t)$-minimal model over $\mathcal{Z}_t\cong X$, which extracts precisely the divisors  $\{\mathcal{E}''_{k,t}\mid r<k\leq n\}=\{E_j\}$.
\end{proof}

\section{Towards boundedness of rationally connected klt Calabi--Yau's}\label{section 7}

We have seen so far that rationally connected klt Calabi--Yau $3$-folds
are birationally bounded in Section \ref{sec 3}. By applying results in Section \ref{sec 4}, in a good number of cases, it is actually possible to prove that boundedness holds
under slightly stronger assumptions.

Firstly we show that rationally connected klt Calabi--Yau $3$-folds with mld bounded away from $1$ are  bounded modulo flops.
\begin{thm}\label{t.bdd.mld}
	Fix $0<c<1$.
	Let $\mathcal{D}$ be the set of varieties $X$ such that
	\begin{enumerate}
		%\item $X$ is a projective klt $3$-fold
		\item $X$ is a rationally connected klt Calabi--Yau $3$-fold, and
		\item $0<\mld(X)< c$.
	\end{enumerate}
	Then $\mathcal{D}$ is   bounded modulo flops. 
\end{thm}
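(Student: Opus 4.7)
The plan is to reduce boundedness of $X$ to the boundedness modulo flops established in Corollary \ref{cor.bdd.lcy3fold} by extracting a divisor that computes $\mld(X)$ and viewing it as the boundary of a log Calabi--Yau pair.

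First, by Lemma \ref{klt=e-klt}, there exists a positive number $\epsilon=\epsilon(3)$, depending only on the dimension, such that every klt Calabi--Yau $3$-fold is $\epsilon$-klt. In particular, for every $X\in\mathcal{D}$ we have $\mld(X)\in[\epsilon,c)$. Set $\alpha:=\mld(X)$ and let $E$ be a divisor over $X$ with $a(E,X,0)=\alpha$. By \cite[Corollary 1.4.3]{BCHM}, there is a $\bQ$-factorial birational morphism $\pi\colon Y\to X$ that extracts precisely $E$, and we have the crepant identity
\[
K_Y+(1-\alpha)E=\pi^{*}K_X\equiv 0.
\]
Since $a(F,Y,(1-\alpha)E)=a(F,X,0)\geq\epsilon$ for any divisor $F$ over $Y$ (with equality $=\alpha$ when $F=E$), the pair $(Y,(1-\alpha)E)$ is an $\epsilon$-lc log Calabi--Yau pair whose unique boundary coefficient $1-\alpha$ lies in the interval $(1-c,\,1-\epsilon]$. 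Moreover, $Y$ is rationally connected since $X$ is.

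Second, apply Corollary \ref{cor.bdd.lcy3fold} with constants $\epsilon$ (from Lemma \ref{klt=e-klt}) and $\delta:=1-c>0$. This shows that the collection of pairs $\{(Y,(1-\alpha)E)\}$, indexed by $X\in\mathcal{D}$, forms a log bounded family modulo flops. Thus there is a projective morphism $h\colon\mathcal{Z}\to S$ of finite type, together with a reduced divisor $\mathcal{E}$ on $\mathcal{Z}$, such that for every $X\in\mathcal{D}$ the corresponding $Y$ is isomorphic in codimension one to some fiber $\mathcal{Z}_s$, and $\mathcal{E}_s$ coincides with the support of the strict transform of $(1-\alpha)E$, namely the strict transform of $E$ itself.

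Third, produce from $(\mathcal{Z},\mathcal{E})\to S$ a bounded family whose fibers are isomorphic in codimension one to the varieties $X\in\mathcal{D}$ by carrying out the divisorial contraction of $E$ in families. In each fiber we have $K_{\mathcal{Z}_s}\equiv-(1-\alpha_s)\mathcal{E}_s$, so $\mathcal{E}_s$ spans a $K_{\mathcal{Z}_s}$-negative extremal ray whose contraction produces a variety isomorphic in codimension one to the corresponding $X$. After passing to a $\bQ$-factorialization of $\mathcal{Z}$, stratifying $S$, and base changing (so that each stratum gives a family with constant discrete invariants), we may run a relative MMP for $K_{\mathcal{Z}}$ over $S$; by the finiteness of minimal models from \cite[Corollary 1.1.5]{BCHM} combined with Noetherian induction, this uniformly contracts $\mathcal{E}$ and yields finitely many families $\mathcal{X}_i\to S_i$ whose closed fibers realize every $X\in\mathcal{D}$ up to isomorphism in codimension one. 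Since each fiber of $\mathcal{X}_i\to S_i$ is a klt Calabi--Yau $3$-fold, $K_{\mathcal{X}_{i,s}}$ is $\bQ$-Cartier, so $\mathcal{D}$ is bounded modulo flops.

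The main obstacle is the final step: extracting $X$ from the bounded family of the pairs $(Y,(1-\alpha)E)$ requires performing the divisorial contraction of $\mathcal{E}$ uniformly in families, since the coefficient $1-\alpha$ (hence the extremal ray that is contracted) varies with $s$. This is not a pure formality, but it is exactly the kind of situation handled by the finiteness-of-models argument already used in the proof of Proposition \ref{p.blow.ups.fam}, and so the same strategy applies here.
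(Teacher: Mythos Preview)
Your first two steps are correct and coincide with the paper's argument: extract a divisor $E$ with $a(E,X)=\alpha\in[\epsilon,c)$, obtain the $\epsilon$-lc log Calabi--Yau pair $(Y,(1-\alpha)E)$, and invoke Corollary \ref{cor.bdd.lcy3fold} to log bound these pairs modulo flops.

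The third step, however, contains a genuine error. You claim that ``$\mathcal{E}_s$ spans a $K_{\mathcal{Z}_s}$-negative extremal ray whose contraction produces a variety isomorphic in codimension one to $X$,'' and propose to run a relative $K_{\mathcal{Z}}$-MMP. But the sign is backwards: for any curve $C$ contracted by $Y\to X$ (hence lying in $E$), one has $E\cdot C<0$, so $K_Y\cdot C=-(1-\alpha)E\cdot C>0$. The contraction $Y\to X$ is $K_Y$-\emph{positive}, not $K_Y$-negative. A $K_{\mathcal{Z}}$-MMP is a $(-\mathcal{E})$-MMP on fibers and will end in a Mori fiber space with $\mathcal{E}$ dominant over the base; it will not contract $\mathcal{E}$. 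Moreover, $\mathcal{Z}_s$ is only isomorphic to $Y$ in codimension one, so there is no morphism $\mathcal{Z}_s\to X$ to begin with, and the invocation of \cite[Corollary 1.1.5]{BCHM} is problematic since there is no big boundary in sight.

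The paper resolves this by a different MMP. One passes to a log resolution $g\colon\mathcal{W}'\to\mathcal{W}$, lets $\mathcal{E}'$ be the reduced sum of the strict transform of $\mathcal{E}$ and all $g$-exceptional divisors, and runs a $(K_{\mathcal{W}'}+(1-\epsilon)\mathcal{E}')$-MMP over $S$ with the \emph{fixed} coefficient $1-\epsilon$. Since $X$ is $(2\epsilon)$-lc, on each relevant fiber one has $K_{\mathcal{W}'_s}+B_s\equiv 0$ with $B_s\leq(1-2\epsilon)\mathcal{E}'_s$, hence
\[
(K_{\mathcal{W}'}+(1-\epsilon)\mathcal{E}')|_{\mathcal{W}'_s}\equiv(1-\epsilon)\mathcal{E}'_s-B_s\geq 0
\]
is effective with support exactly $\mathcal{E}'_s$, which is precisely the locus exceptional over $X$. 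Thus the relative minimal model (which exists by \cite{HX13}, not by finiteness of models) contracts exactly $\mathcal{E}'_s$ and is isomorphic to $X$ in codimension one. The key idea you are missing is to \emph{over}-weight the boundary by a uniform amount so that the log canonical divisor becomes effective and exceptional over $X$ on every fiber simultaneously.
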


\begin{proof}
	Take $X\in \mathcal{D}$. % be a rationally connected Calabi--Yau $3$-fold with $0<\mld(X)\leq c$. %Note that $X$ has worse than canonical singularities, that is, there exists a closed point $p\in X$ such that $\mld_p(X)<1$. 
	%By assumption, there exists a constant $c \in (0,1)$ such that $\mld_p(X)<c$.
	By \cite[Corollary 1.4.3]{BCHM}, we may take   a birational morphism $\pi \colon Y \to X$ extracting only one exceptional 
	divisor $E$ of log discrepancy $a=a(E, X) \in (0, c)$. Then
	\[
	K_Y+(1-a)E = \pi^\ast K_X.
	\]
	Also by Global ACC (see Lemma \ref{klt=e-klt}), there exists a constant $\epsilon \in (0,\frac{1}{2})$ such that $X$ is $(2\epsilon)$-lc, and therefore $(Y, (1-a)E)$ is a $(2\epsilon)$-lc log Calabi--Yau pair with $1-a>1-c>0$ and $Y$ rationally connected. 
	
	Now  by Corollary \ref{cor.bdd.lcy3fold}, the pairs $(Y, E)$ are log bounded modulo flops.
	%since the coefficients of $\Delta$ can only vary in 
	%a finite set. Let us notice that in order to apply said theorem, we would
	%need to verify that $Y$ is not of product type, but this is not needed, as noted
	%at the end of the paper when the dimension of $Y$ is three. Hence, we will
	%substitute the pair $(Y, \Delta)$ with a corresponding pair (isomorphic in
	%codimension one) that appears as a fibre of the bounded family constructed in \cite{DCS}.
	That is, there are finitely many quasi-projective normal varieties $\mathcal{W}_i$, a reduced divisor $\mathcal{E}_i$ on $\mathcal W_i$, and a projective morphism $\mathcal{W}_i\to S_i$, where $S_i$ is a normal variety of finite type and $\mathcal{E}_i$ does not contain any fiber, such that
	for every $(Y, E)$, there is an index $i$, a closed point $s \in S_i$, and a small birational
	map $f : {\mathcal{W}_{i,s}} \dashrightarrow Y$
	such that $\mathcal{E}_{i,s}=f_*^{-1}E$. We may assume that the set of points $s$ corresponding to such $Y$ is dense in each $S_i$.
	We may just consider a fixed index $i$ and ignore the index in the following argument.
	
	For the point $s$ corresponding to $(Y,E)$, $$K_{{\mathcal{W}_s}}+(1-a)f_*^{-1}E\equiv f_*^{-1}(K_Y+(1-a)E)\equiv 0$$ and therefore $({\mathcal{W}_s}, (1-a)f_*^{-1}E)$ is a $(2\epsilon)$-lc log Calabi--Yau pair. % where $\epsilon=\frac{1}{2r}$.
	Now consider a log resolution $g: \mathcal{W}'\to \mathcal{W}$ of $(\mathcal{W},\mathcal{E})$ and denote by $\mathcal{E}'$ the strict transform of $\mathcal{E}$ and all the sum of $g$-exceptional reduced divisors on $\mathcal{W}'$. Consider the log canonical pair $(\mathcal{W}',(1-\epsilon)\mathcal{E}')$. 
	There exists an open dense set $U\subset S$ such that for the point $s\in U$ corresponding to $(Y,E)$, 
	$g_s: \mathcal{W}'_s\to \mathcal{W}_s$ is a log resolution and we can write
	$$
	K_{\mathcal{W}'_s}+B_s=g_s^*(K_{{\mathcal{W}_s}}+(1-a)f_*^{-1}E)\equiv 0
	$$
	where the coefficients of $B_s$ are $\leq 1-2\epsilon$ and its support is contained in $\mathcal{E}'_s=\mathcal{E}'|_{{\mathcal{W}'_s}}$. We have
	\begin{align*}
	{}&(K_{\mathcal{W}'}+(1-\epsilon)\mathcal{E}')|_{{\mathcal{W}'_s}}\\
	\equiv {}&K_{\mathcal{W}'_s}+(1-\epsilon)\mathcal{E}'_s\\
	\equiv {}&(1-\epsilon)\mathcal{E}'_s-B_s.
	\end{align*}
	Note that the support of $(1-\epsilon)\mathcal{E}'_s-B_s$ coincides with the support of $\mathcal{E}'_s$ which are precisely the divisors on $\mathcal{W}'_s$ exceptional over $X$. Hence $(K_{\mathcal{W}'}+(1-\epsilon)\mathcal{E}')$ is of Kodaira dimension zero on the fiber $\mathcal{W}'_s$ and we can run a $(K_{\mathcal{W}'}+(1-\epsilon)\mathcal{E}')$-MMP with scaling of an ample divisor over $S$ to get a relative minimal model $\tilde{\mathcal{W}}$ over $S$. This MMP terminates by \cite[Corollary 2.9, Theorem 2.12]{HX13}. Note that for the point $s\in U$ corresponding to $(Y,E)$, $\mathcal{E}'_s$ is contracted and hence $\tilde{\mathcal{W}}_s$ is isomorphic to $X$ in codimension one. This gives a bounded family modulo flops, over $U$.
	Applying Noetherian induction on $S$, the family of all such $X$ is bounded modulo flops.
	%As this is a bounded family, by noetherian induction and finiteness of the
	%coefficients, there exists $0<\epsilon \ll 1$ such that $(Y, (1+\epsilon)\Delta)$ is 
	%klt. But then, using noetherian induction and \cite[Theorem 1.1]{HX13}, there are
	%finitely many families that parametrize the minimal models of $(Y, (1+\epsilon)\Delta)$.
	%Since the Kodaira dimension of $\Delta$ is $0$, the minimal model of $(Y, (1+\epsilon)\Delta)$
	%is isomorphic in codimension $1$ to $X$, which proves the claim.
\end{proof}

As an interesting application, we can show that for rationally connected klt Calabi--Yau $3$-folds, boundedness modulo flops is equivalent to the boundedness of the global index.

\begin{cor}\label{c.bdd.index}
	Let $\mathcal{D}$ be a set of rationally connected klt Calabi--Yau $3$-folds.
	Then $\mathcal{D}$ is  bounded modulo flops if and only if
	there exists a positive integer $r$ such that $rK_X\sim 0$ for any $X\in \mathcal{D}$.
\end{cor}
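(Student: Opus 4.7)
I establish the two implications separately.

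For the ``only if'' direction, suppose $\mathcal{D}$ is bounded modulo flops, witnessed by $h\colon\mathcal{Z}\to S$. By stratifying $S$ and Noetherian induction, I may assume $h$ is flat and that $K_{\mathcal{Z}/S}$ has a common relative Cartier index $N$. Each fiber $\mathcal{Z}_s$ is klt with $K_{\mathcal{Z}_s}\equiv 0$, so by \cite{G}, $K_{\mathcal{Z}_s}\sim_\bQ 0$, whence $NK_{\mathcal{Z}_s}$ defines a torsion class in $\Pic(\mathcal{Z}_s)$. Rational connectedness of each fiber gives $\Pic^0(\mathcal{Z}_s)=0$, and the torsion of $\Pic(\mathcal{Z}_s)$ is uniformly bounded by some $M$ in the bounded family. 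Then $r:=MN$ yields $rK_{\mathcal{Z}_s}\sim 0$ uniformly, and the relation transfers to $X$ via the isomorphism in codimension one $X\dashrightarrow\mathcal{Z}_s$.

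For the converse, suppose $rK_X\sim 0$ for all $X\in\mathcal{D}$. Taking any log resolution $\pi\colon Y\to X$ and writing $K_Y=\pi^*K_X+\sum a_iE_i$, pulling back $rK_X\sim 0$ gives $rK_Y\sim\sum(ra_i)E_i$; since $rK_Y$ is Cartier with integer coefficients on the smooth $Y$, each $ra_i\in\ZZ$. Hence all log discrepancies of $X$ lie in $\tfrac{1}{r}\ZZ$, and by klt, $\mld(X)\in\{1/r,2/r,\ldots,1\}$. I then distinguish two subcases. If $\mld(X)<1$, then $\mld(X)\leq(r-1)/r$; taking $c:=1-\tfrac{1}{2r}\in(0,1)$, we have $0<\mld(X)<c$, and Theorem \ref{t.bdd.mld} gives bounded modulo flops. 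If $\mld(X)=1$, then $X$ is terminal and Theorem \ref{t.bdd.mld} no longer applies; here I would invoke Theorem \ref{main} to place the collection in a birationally bounded family $\mathcal{Z}'\to S'$ of log resolutions, and run a relative $(K_{\mathcal{Z}'/S'})$-MMP with scaling of a relatively ample divisor. On each fiber, this MMP contracts precisely the exceptional divisors of the original resolution and terminates at a minimal model, which is a flop of $X$; this produces the bounding family modulo flops.

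The most delicate step is the terminal subcase, where Theorem \ref{t.bdd.mld} does not apply (since it requires the strict inequality $\mld(X)<c<1$): one must run MMP in families of log resolutions of rationally connected $3$-folds and control termination and finiteness of minimal models uniformly across the family. The forward direction and the non-terminal subcase are routine, relying respectively on uniform boundedness of Picard torsion in a bounded family (guaranteed by the vanishing of $\Pic^0$ in the rationally connected setting) and a direct application of Theorem \ref{t.bdd.mld}.
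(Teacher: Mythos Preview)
Your ``only if'' direction is essentially the paper's, though the paper phrases it more cleanly: rather than bounding torsion in $\Pic(\mathcal{Z}_s)$ across the family (which requires care since $\mathcal{Z}_s$ is singular), the paper passes to a common smooth resolution $W$, which is rationally connected hence simply connected, so $\Pic(W)$ is torsion-free and a $\bQ$-linearly trivial Cartier divisor on $W$ is automatically linearly trivial.

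Your ``if'' direction, however, has a genuine gap --- or rather, an unnecessary and incomplete detour. The case $\mld(X)=1$ that you single out \emph{cannot occur}. Indeed, $\mld(X)\geq 1$ means $X$ is canonical (not terminal as you wrote); but a canonical variety with $K_X\equiv 0$ admits a resolution $\tilde{X}\to X$ with $K_{\tilde{X}}$ effective, so $\tilde{X}$ is not uniruled, and hence $X$ is not rationally connected. The paper notes this in one line: ``Since $X$ is rationally connected, $X$ has worse than canonical singularities, that is, $\mld(X)<1$.'' Once you know $\mld(X)<1$ always, the discreteness argument gives $\mld(X)\leq 1-\tfrac{1}{r}$ and Theorem \ref{t.bdd.mld} applies uniformly, with no case split.

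Your proposed handling of the phantom case via relative MMP on a birationally bounded family is both unnecessary and incomplete as stated: you would need to justify why the relative minimal model is isomorphic in codimension one to $X$ fiberwise (termination alone does not give this without knowing which divisors are contracted), and this is exactly the kind of argument that Theorem \ref{t.bdd.mld} already packages. But again, the case is vacuous.
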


\begin{proof}
	Assume that $\mathcal{D}$ is bounded modulo flops, then 
	there exists a bounded family $\mathcal{D}'$ of normal projective varieties such that for every $X\in \mathcal{D}$, there exists $Y\in\mathcal{D}'$ and a small birational morphism $f: Y\dashrightarrow X$. Moreover, $K_Y$ is $\bQ$-Cartier by definition. Hence we have $K_Y=f_*^{-1}K_X\sim_\bQ 0$.
	Take a common resolution $p:W\to Y$ and $q:W\to X$,
	by the Negativity Lemma, we have $p^*K_Y=q^*K_X$ and $Y$ is klt.
	Since $\mathcal{D}'$ is bounded, there exists a constant $r \in \mathbb{N}_{>0}$ such that $rK_Y$ is Cartier, which means that $p^*(rK_Y)\sim_\bQ 0$ is a Cartier divisor. Since $W$ is rationally connected, it is simply connected and hence $p^*(rK_Y)\sim 0$. Therefore $rK_X=q_*q^*(rK_X)\sim 0$. This proves the `only if' part.
	
	%by \cite[Prop. 2.5]{HX15} there exists a family $(\mathcal Y , \mathcal F)\to T$
	%over a quasi-projective base $T$ such that for any $X_i \in \mathcal{D}$ and 
	%any set of divisors $E_{i,j}$ with $j\in I_i$ such that $E_{i,j}$ is exceptional of discrepancy $a(E_{i,j},X_i,B_i)\leq 0$, then there exists $t\in T$ and a morphism $\mu _t : \mathcal Y _t \to X_i$ which extracts precisely the divisors $E_{i,j}$ with $j\in I_i$  such that $K_{\mathcal Y _t}+  \mathcal F _t=\mu _t^*(K_{X_i})$.
	%Using noetherian induction and resolving $(\tilde{Y}, \tilde{\mathcal F}) \to (Y, \mathcal F)$, 
	%we can assume that the pairs $(Y_t, \mathcal F_t)$ are log smooth over $T$. 
	%Applying \cite[Theorem 4.2]{HMX14_b} to the pairs $(\tilde{Y}_t, \tilde{\mathcal F}_t)$, it follows
	%that there exists $n \in \mathbb{N}_{>0}$ such that $n(K_{Y_u}+\mathcal{F}_u) \sim 0$ and so
	%the same conclusion must hold for any $X \in m\mathcal{D}$.
	
	%If there exists $r \in \mathbb{N}_{>0}$ such that $rK_X\sim 0$ for  any $X\in \mathcal{D}$, then the implication 
	If there exists a positive integer $r$ such that $rK_X\sim 0$ for any $X\in \mathcal{D}$, then it is clear that $r\cdot \mld(X)$ is a positive integer.  Since $X$ is rationally connected, $X$ has worse than canonical singularities, that is, $\mld(X)<1$. Therefore $\mld(X)\leq 1-\frac{1}{r}<1-\frac{1}{2r}$. 
	The `if' part follows directly from Theorem \ref{t.bdd.mld}.
\end{proof}

As another application, Theorem \ref{t.bdd.mld} relates Conjecture \ref{conj1} to conjectures for minimal log discrepancies. Recall the following deep conjecture regarding the behavior of minimal log discrepancy proposed by Shokurov. 
\begin{conj}[ACC for mld, cf. {\cite[Problem 5]{Sho88}, \cite[Conjecture 4.2]{MR1420223}}]\label{c.mld.strong}
	Fix a positive integer $d$ and a DCC set $I \subset [0,1]$. Then the set
	\[
	\{\mathrm{mld}_{\eta_Z}(X, \Delta) \mid  (X, \Delta) \text{ is lc}, \; \dim X\leq d, \;
	Z\subset X, \; \mathrm{coeff}(\Delta) \in I \}
	\]
	satisfies the ACC. 
\end{conj}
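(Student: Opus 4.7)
The plan is to attack this open conjecture by contradiction, combining plt blow-ups and precise adjunction to reduce to a lower-dimensional situation, and then invoking ACC for log canonical thresholds \cite{HMX14} together with the boundedness of Fano-type varieties \cite{Bir16a, Bir16b}.

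Suppose, for contradiction, that there exist lc pairs $(X_i,\Delta_i)$ with $\dim X_i\le d$, $\mathrm{coeff}(\Delta_i)\in I$, and closed subvarieties $Z_i\subset X_i$, such that $a_i := \mld_{\eta_{Z_i}}(X_i,\Delta_i)$ is strictly increasing. After passing to a subsequence, we may assume $\dim X_i=d$ is fixed, the codimension of $Z_i$ is constant, and $a_i \nearrow a$ for some $a\in[0,d]$. By \cite[Corollary 1.4.3]{BCHM}, one produces a plt blow-up $\phi_i\colon Y_i\to X_i$ extracting a single prime divisor $E_i$ with $a(E_i,X_i,\Delta_i)=a_i$ and $\phi_i(E_i)\supseteq Z_i$, satisfying
\[
K_{Y_i}+(1-a_i)E_i+\phi_{i,*}^{-1}\Delta_i=\phi_i^{*}(K_{X_i}+\Delta_i).
\]

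Next, apply Shokurov--Kawakita adjunction to $E_i$: the different $\mathrm{Diff}_{E_i}(\phi_{i,*}^{-1}\Delta_i)$ has coefficients in a DCC set $I'$ depending only on $I$ and on $\{1-a_i\}$. The induced $(d-1)$-dimensional lc pair $(E_i,\mathrm{Diff}_{E_i}(\phi_{i,*}^{-1}\Delta_i))$ has, by inversion of adjunction, log discrepancy at a center over $Z_i$ controlled by $a_i$, reducing the problem to an inductive statement in dimension $d-1$ with an enlarged DCC coefficient set. Then run a $(-E_i)$-MMP on $Y_i$ over $X_i$: this yields a relative Mori fiber space along which $E_i$ is relatively ample, and restriction to a general fiber produces an $\epsilon$-lc log Fano variety of dimension $\le d-1$ on which one hopes to invoke BAB.

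The main obstacle is precisely where this strategy has been blocked for $d\ge 3$: in order to conclude, one needs a \emph{uniform} lower bound $\epsilon>0$ on the log discrepancies of the pairs $(E_i,\mathrm{Diff}_{E_i}(\phi_{i,*}^{-1}\Delta_i))$, so that \cite[Theorem 1.1]{Bir16b} produces a bounded family on which global ACC \cite[Theorem 1.5]{HMX14} forces the coefficients $\{1-a_i\}\subset I'$ to stabilize, contradicting strict monotonicity of $a_i$. Such a uniform bound is essentially equivalent to a $1$-Gap type statement for mld and is presently available only under extra hypotheses (fixed ambient singularity type, surfaces \cite{AK2}, or mld in a prescribed neighborhood of a rational value). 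In the context of the present paper, Theorem \ref{t.bdd.mld} together with Corollary \ref{c.bdd.index} supplies exactly this missing uniform bound in the very restricted class of rationally connected Calabi--Yau $3$-folds, but the fully general ACC statement above remains out of reach without a substantial new idea addressing how the limit value $a$ can approach boundary values of $(0,1)$ arbitrarily closely.
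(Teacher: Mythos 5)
This statement is labeled as a \emph{conjecture} in the paper (Shokurov's ACC conjecture for minimal log discrepancies), and the paper offers no proof of it; it is cited precisely because it remains open. You correctly recognize this: rather than claiming a proof, you sketch a plausible line of attack via plt blow-ups, adjunction to the extracted divisor, and BAB, and then honestly identify the exact point where it breaks down (the lack of a uniform lower bound on log discrepancies needed to invoke boundedness and Global ACC in dimension $d-1$). This diagnosis is accurate and consistent with the paper's own posture, which only invokes the much weaker $1$-Gap statement (Conjecture \ref{c.mld}) as a hypothesis in Corollary \ref{t.bdd.cyklt}, and proves an unconditional result only in the very restricted setting of rationally connected klt Calabi--Yau $3$-folds with mld bounded away from $1$ (Theorem \ref{t.bdd.mld}). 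One small point worth flagging: your sentence suggesting that ``Theorem \ref{t.bdd.mld} together with Corollary \ref{c.bdd.index} supplies exactly this missing uniform bound'' overstates what those results do; they establish boundedness \emph{given} an mld gap, not the gap itself, so the logical direction is the reverse of what a proof of the conjecture would require. With that caveat, your assessment that the conjecture is out of reach with the tools in this paper is correct, and since the paper itself supplies no proof there is nothing to compare beyond noting you have not (and could not have) closed the gap.
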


ACC stands for ascending chain condition whilst DCC stands for descending chain condition. For recent progress on minimal log discrepancies, we refer the readers to \cite{MN18,Kaw18,Liu18,HLS19,HL20,NS20}.

Here we only need a very weak version of Conjecture \ref{c.mld.strong}.

\begin{conj}[1-Gap conjecture for mld]\label{c.mld}Fix a positive integer $d$.
	Then $1$ is not an accumulation point from below for the set
	\[
	\{\mathrm{mld}(X)  \mid \dim X\leq d\}.
	\]
\end{conj}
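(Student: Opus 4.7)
The plan is to proceed by contradiction, combining Birkar's boundedness of complements with the Global ACC theorem. Suppose there is a sequence of normal projective varieties $X_i$ of dimension $\leq d$ with $a_i := \mld(X_i) \in (0,1)$ and $a_i \to 1^-$. For each $i$, by \cite[Corollary 1.4.3]{BCHM}, there is a birational morphism $\pi_i\colon Y_i \to X_i$ extracting a single prime divisor $E_i$ with $a(E_i,X_i) = a_i$, so that
\[
K_{Y_i} + (1-a_i) E_i \;=\; \pi_i^{\ast} K_{X_i}.
\]
The pair $(Y_i,(1-a_i)E_i)$ is klt with boundary coefficient tending to zero, and $-E_i$ is $\pi_i$-ample. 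This mirrors the set-up used in the proof of Theorem~\ref{t.bdd.mld}.

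Next, equip $X_i$ with a global log Calabi--Yau structure via Birkar's theory of bounded complements \cite{Bir16a}: there exist a positive integer $N=N(d)$ and effective $\bQ$-divisors $B_i$ on $X_i$ such that $(X_i, B_i)$ is lc, $N(K_{X_i}+B_i)\sim 0$, and the coefficients of $B_i$ lie in a fixed finite set $I = I(d) \subset [0,1]$. Pulling back via $\pi_i$ produces a crepant lc log Calabi--Yau pair
\[
(Y_i, D_i), \qquad D_i \;=\; (\pi_i)^{-1}_{\ast}B_i \;+\; c_i E_i,
\]
with $c_i = 1-a_i + m_i$ and $m_i := \mathrm{mult}_{E_i}(\pi_i^{\ast}B_i) \in [0, a_i)$. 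The non-$E_i$ coefficients of $D_i$ all lie in $I$.

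To conclude, apply the Global ACC theorem \cite[Theorem 1.5]{HMX14} to the family $\{(Y_i, D_i)\}$. If one can ensure that $\{m_i\}$ lies in a DCC set, for instance by observing that $Nm_i \in \frac{1}{N}\ZZ$ after clearing denominators, then the set $\{c_i\}$ satisfies the ACC. Passing to a subsequence on which $m_i$ stabilizes to some fixed value $m$, we obtain $c_i = (1-a_i)+m$, and ACC forces $c_i$, hence $1-a_i$, to stabilize; this contradicts $a_i \to 1^-$.

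The hard part is controlling the pullback multiplicity $m_i=\mathrm{mult}_{E_i}(\pi_i^{\ast}B_i)$ uniformly across the sequence. Bounded complements supply $B_i$ with coefficients in a fixed finite set on $X_i$, but $E_i$ lives on the a priori unbounded model $Y_i$, and its behavior under pullback of $B_i$ is not controlled by the data on $X_i$ alone. Overcoming this appears to require a local BAB-type boundedness statement ensuring that divisorial extractions with $\mld$ close to $1$ form a bounded family, which is essentially the local content of Shokurov's original ACC conjecture for mld and remains the principal unresolved obstruction. A complementary route would be to upgrade complement theory so that the coefficient of a prescribed divisorial valuation is forced to lie in a DCC set; either path would close the argument above.
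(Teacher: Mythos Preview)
The statement you are attempting to prove is labelled as a \emph{Conjecture} in the paper (Conjecture~\ref{c.mld}), and the paper offers no proof of it. It is used only as a hypothesis in Corollary~\ref{t.bdd.cyklt}. So there is nothing in the paper to compare your argument against; rather, the question is whether your argument actually proves an open conjecture. It does not, and you yourself identify the main obstruction in your final paragraph. But there are further problems earlier in the sketch.

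First, the appeal to bounded complements is unjustified. Birkar's boundedness of complements in \cite{Bir16a} applies to Fano-type pairs (or, more generally, to pairs with suitable anti-nef log canonical class); it does not furnish an lc complement on an arbitrary variety $X$ with $\dim X\le d$. For instance, if $X=X_0\times C$ with $X_0$ a klt Calabi--Yau variety satisfying $\mld(X_0)<1$ and $C$ a curve of genus $\ge 2$, then $\mld(X)=\mld(X_0)<1$, yet $-K_X$ is not pseudo-effective, so no effective $B$ with $K_X+B\equiv 0$ exists. Your construction of $B_i$ therefore fails already at this step.

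Second, even in situations where $B_i$ exists (say $X_i$ Calabi--Yau, so $B_i=0$), the Global ACC step does not yield what you claim. With $B_i=0$ one has $D_i=(1-a_i)E_i$, and the sequence of coefficients $1-a_i$ is strictly decreasing to $0$: this is \emph{not} a DCC set, so \cite[Theorem~1.5]{HMX14} does not apply. More generally, to invoke Global ACC you must place the coefficient $c_i=(1-a_i)+m_i$ in a fixed DCC set independent of $i$; but $1-a_i\to 0^+$ destroys DCC regardless of any control on $m_i$. Your suggestion that $Nm_i\in\frac{1}{N}\ZZ$ is also incorrect: $m_i=\mult_{E_i}(\pi_i^\ast B_i)$ depends on the valuation $E_i$, not just on the coefficients of $B_i$, and has no a priori bounded denominator. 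As you correctly note at the end, controlling this quantity is essentially the content of Shokurov's ACC conjecture itself.
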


\begin{cor}\label{t.bdd.cyklt}
	Assume that Conjecture \ref{c.mld} holds for rationally connected klt Calabi--Yau $3$-folds. Then the set of  rationally connected klt Calabi--Yau $3$-folds is bounded modulo flops. %
	%bounded in codimension one.
\end{cor}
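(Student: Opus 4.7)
The plan is to reduce the corollary to a direct application of Theorem \ref{t.bdd.mld} by producing a uniform gap between $\mld(X)$ and $1$ across the whole family of rationally connected klt Calabi--Yau $3$-folds.

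First I would recall the observation already used inside the proof of Corollary \ref{c.bdd.index}: every rationally connected klt Calabi--Yau $3$-fold $X$ satisfies $\mld(X)<1$, that is, $X$ is \emph{not} canonical. Indeed, if $X$ were canonical, a $\bQ$-factorial crepant terminalization $Y\to X$ would produce a terminal Calabi--Yau $Y$ that is still rationally connected, and passing to a smooth model of a global index-one cover one would obtain a smooth rationally connected projective variety with (quasi-)trivial canonical bundle, contradicting $H^0(\omega^{\otimes m})=0$ for smooth rationally connected varieties. Combining $\mld(X)<1$ with the hypothesis---Conjecture \ref{c.mld} for the class of rationally connected klt Calabi--Yau $3$-folds---which says precisely that $1$ is not an accumulation point from below of $\{\mld(X)\}$, yields a constant $c\in(0,1)$, independent of $X$, with $\mld(X)\le c$ for every $X$ in the family.

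Finally I would apply Theorem \ref{t.bdd.mld} with this uniform $c$: since $X$ is klt one has $\mld(X)>0$, and the previous paragraph gives $\mld(X)<c<1$, so every such $X$ satisfies the hypotheses of the theorem and the family is bounded modulo flops. In this proposal the real content is concentrated in Theorem \ref{t.bdd.mld} (and ultimately in Corollary \ref{cor.bdd.lcy3fold} and Theorem \ref{t.bdd.mfs}); the corollary itself is a clean reduction. The only step that is not purely formal is producing the uniform gap $\mld(X)<1$, and that is a known consequence of rational connectedness being incompatible with a (quasi-)trivial canonical bundle on a smooth model. Thus there is no substantive obstacle in the argument itself: the entire difficulty has been offloaded onto the as-yet-unproven $1$-Gap conjecture, which is what justifies phrasing the result as a conditional corollary.
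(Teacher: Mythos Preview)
Your proposal is correct and follows essentially the same route as the paper: observe that rational connectedness forces $\mld(X)<1$, invoke the assumed $1$-Gap conjecture to upgrade this to a uniform bound $\mld(X)<c<1$, and then apply Theorem \ref{t.bdd.mld}. The paper's proof is just these three lines; your only addition is an explicit justification of the inequality $\mld(X)<1$, which the paper states without argument (it is used identically in the proof of Corollary \ref{c.bdd.index}).
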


\begin{proof}
	Let $X$ be a rationally connected klt Calabi--Yau $3$-fold. Note that $X$ has worse than canonical singularities, that is, $\mld(X)<1$. 
	By Conjecture \ref{c.mld}, there exists a constant $c \in (0,1)$ such that $\mld(X)< c$.
	Hence the corollary follows directly from Theorem \ref{t.bdd.mld}. 
\end{proof}

%We remark that recently Conjecture \ref{c.mld} was confirmed by Jiang \cite{Jiang-gap} in dimension $3$, and hence Corollary \ref{t.bdd.cyklt} holds unconditionally.

%%%%%%%%%%%%%%%%%%%%%%%%%%%%%%%%%%%%%%%%%%%%%%%%%%%%%%%%%%%%%%%%%%%%%%%%%%
Finally we consider  rationally connected klt Calabi--Yau $3$-folds with positive augmented irregularity. For klt Calabi--Yau varieties, Greb, Guenancia, and Kebekus 
proved that there is a decomposition,
after a quasi-\'etale covering, that is analogous to the classical
Beauville--Bogomolov decomposition in the smooth case, \cite{MR730926}.

\begin{thm}[{\cite[Theorem B]{ggk}}]\label{t.ggk}
	Let $X$ be a klt variety with $K_X\equiv 0$. Let $H$ be an ample divisor on $X$, and $\omega_H\in c_1(H)$ the singular Ricci-flat K\"ahler metric constructed by \cite{EGZ09}. Then there are normal projective varieties $A, Z$ and a quasi-\'etale cover $\gamma \colon A\times Z \to X$ such that 
	\begin{enumerate}
		\item $A$ is an abelian variety of dimension $\tilde{q}(X)$,
		\item $Z$ has canonical singularities, $K_Z\sim 0$ and 
		$\tilde{q}(Z)=0$, and
		\item there exists a flat K\"ahler form $\omega_A$ on $A$
		and a singular Ricci-flat K\"ahler metric $\omega_Z$ on $Z$
		such that $\gamma^\ast \omega_H = \mathrm{pr}_1^\ast 
		\omega_A + \mathrm{pr}_2^\ast \omega_Z$ and such that
		the holonomy group of the corresponding Riemannian metric 
		on $A \times Z_{reg}$ is connected.
	\end{enumerate}
\end{thm}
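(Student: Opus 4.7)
The plan is to adapt the classical Beauville--Bogomolov decomposition to the singular setting, replacing smooth Kähler techniques with their singular analogues developed over the past decade. The backbone of the argument is the existence of a singular Ricci-flat Kähler--Einstein metric together with a careful study of its holonomy, combined with a mechanism to pass from transcendental data (flat sections on a smooth locus) to genuine algebraic data (morphisms to abelian varieties).

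\textbf{Step 1 (singular KE metric).} Invoke the Eyssidieux--Guedj--Zeriahi construction to produce a singular Ricci-flat Kähler--Einstein metric $\omega_X$ on $X$ with bounded potentials, smooth on $X_{\mathrm{reg}}$. By Berman--Boucksom and related work, one gets enough regularity at infinity (i.e., near $\Sing X$) to treat $\omega_X$ on $X_{\mathrm{reg}}$ as a genuine Ricci-flat Kähler metric with controlled behaviour.

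\textbf{Step 2 (holonomy splitting on $X_{\mathrm{reg}}$).} On $(X_{\mathrm{reg}},\omega_X)$, apply a de Rham-type decomposition for the restricted holonomy representation. This gives an orthogonal splitting of $T_{X_{\mathrm{reg}}}$ into a flat summand plus summands on which the holonomy acts irreducibly. The flat summand is what will eventually become the tangent bundle of the abelian factor $A$.

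\textbf{Step 3 (extending the decomposition across $\Sing X$).} Use a Bochner principle for klt varieties (of Greb--Kebekus--Peternell type) to show that flat tensors on $X_{\mathrm{reg}}$ extend across $\Sing X$ as reflexive sheaves, and that the holonomy decomposition induces an honest decomposition of the reflexive tangent sheaf $\mathcal{T}_X$. In particular, the rank of the flat summand equals the augmented irregularity $\widetilde{q}(X)$, computed after passing to a quasi-étale cover.

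\textbf{Step 4 (algebraic integration, the main obstacle).} This is the step where the analytic input must be converted into algebraic geometry: the flat holomorphic $1$-forms on $X_{\mathrm{reg}}$ need to be integrated to produce an actual morphism to an abelian variety. The plan is to take a quasi-étale cover $\widetilde{X}\to X$ on which the flat $1$-forms become single-valued and globally defined reflexive forms, then construct an Albanese-type morphism $\alb_{\widetilde{X}}\colon \widetilde{X}\to A$. Showing that this morphism is surjective with connected fibres, and that the relative tangent sheaf splits compatibly, is the technical heart of the argument. This part relies crucially on Campana--Păun's algebraic integrability results for foliations with trivial canonical class.

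\textbf{Step 5 (product decomposition and properties of the $Z$-factor).} Once $\widetilde{X}\to A$ is constructed, use the splitting of $\mathcal{T}_{\widetilde{X}}$ from Step 3 together with a singular Cheeger--Gromoll-type argument to show that after a further quasi-étale cover we obtain an isomorphism $A\times Z \xrightarrow{\sim} \widetilde{X}$. Then verify that $Z$ inherits canonical singularities and $K_Z\sim 0$ from the splitting, that $\widetilde{q}(Z)=0$ by construction, and that the Kähler metric on $\widetilde{X}$ splits as $\mathrm{pr}_1^\ast \omega_A+\mathrm{pr}_2^\ast\omega_Z$; connectedness of the holonomy on the $Z$-factor is arranged by passing to a finite connected cover. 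The main obstacle remains Step 4: without the algebraicity of the period map and the extension of flat forms across the singular locus, the whole transition from analytic holonomy to an algebraic product structure collapses.
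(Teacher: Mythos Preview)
This theorem is not proved in the paper: it is quoted verbatim as \cite[Theorem B]{ggk} and used as a black box in the proof of Corollary~\ref{t.bdd.ab.cy}. There is therefore no proof in the paper to compare your proposal against. Your outline is a reasonable high-level sketch of the strategy in \cite{ggk} (singular Ricci-flat metric, holonomy splitting on the regular locus, Bochner-type extension, algebraic integration of the flat factor), but for the purposes of this paper no argument is required beyond the citation.
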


Here we recall the notion of augmented irregularity. % which plays a fundamental
%role in \cite[Definition 2.6]{MR2587100}.

\begin{definition}[{cf. \cite[Definition 2.20]{ggk}}]\label{d.gen.irr}
	Let $X$ be a normal projective variety. The {\it irregularity} of $X$ is defined
	as $q(X):= h^1(X, \mathcal{O}_X)$.
	The {\it augmented irregularity} of $X$ is defined as $\tilde{q}(X)= \sup \{ q(Y)\; | \;  Y \to X \; \text{quasi-\'etale cover}\} \in \mathbb{N} \cup \{\infty\}$.
\end{definition}

\begin{remark}
	When $X$ has rational singularities (as will be the case
	in this section), then $q(X) = q(Z)$ for any $Z \to X$ resolution
	of singularities.
\end{remark}

As a consequence of the decomposition for singular Calabi--Yau 
varieties and Theorem \ref{t.bdd.mld}, we can prove boundedness for
those rationally connected klt Calabi--Yau $3$-folds
that contain an abelian factor.

\begin{cor}\label{t.bdd.ab.cy}
	Let $\mathcal{D}_{ab}$ be the set of varieties $X$ such that
	\begin{enumerate}
		%\item $X$ is a projective klt threefold
		\item $X$ is a rationally connected klt Calabi--Yau $3$-fold, and
		\item $\tilde{q}(X) >0$.
	\end{enumerate}
	Then $\mathcal{D}_{ab}$ is bounded modulo flops.
\end{cor}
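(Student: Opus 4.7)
The plan is to combine Theorem \ref{t.ggk} with Corollary \ref{c.bdd.index}, using rational connectedness of $X$ to bound the degree of the quasi-\'etale cover provided by the Beauville--Bogomolov-type decomposition. Given $X\in \mathcal{D}_{ab}$, Theorem \ref{t.ggk} yields a finite quasi-\'etale cover $\gamma\colon Y:=A\times Z\to X$, where $A$ is a non-trivial abelian variety with $\dim A=\tilde q(X)\ge 1$ and $Z$ satisfies $K_Z\sim 0$ and $\tilde q(Z)=0$. Since $\dim X=3$, the case $\dim A=2$ is immediately excluded: it would force $\dim Z=1$, hence $Z$ elliptic, contradicting $\tilde q(Z)=0$. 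So either $\dim A=3$ (and $Y=A$, with $Z$ a point), or $\dim A=1$ (and $Z$ is a canonical Calabi--Yau surface with $\tilde q(Z)=0$, i.e., a K3 or Enriques surface).

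Set $G:=\text{Gal}(\gamma)$, so that $X=Y/G$ and $K_Y=\gamma^*K_X$. Since $K_Y\sim 0$ on $Y=A\times Z$, the index of $K_X$ divides $|G|$, so it suffices to produce a constant $N$, independent of $X\in\mathcal{D}_{ab}$, with $|G|\le N$; Corollary \ref{c.bdd.index} will then yield boundedness modulo flops. Note that the strategy via Theorem \ref{t.bdd.mld} is not directly available, because examples such as Oguiso--Truong's $E^3_\omega/\langle\omega\rangle$ are already canonical ($\mld=1$), so the $\mld$ of $X$ need not be bounded strictly below $1$.

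To obtain such a bound I would exploit that $X$ is rationally connected. The $G$-action on $Y$ induces an action on $\Alb(Y)=A$, and the projection $Y\to A$ descends to a rational map $X\dashrightarrow A/G$; rational connectedness of $X$ forces its Albanese to be trivial and the target to be rationally connected (in particular, uniruled), which severely restricts the $G$-action on $A$. In the case $\dim A=3$, this reduces to the classification of finite-group actions on abelian threefolds whose quotient is rationally connected, whose prototypical examples are the diagonal roots-of-unity actions on products of CM elliptic curves as in the Oguiso--Truong construction; such actions form finitely many families up to isogeny, yielding a uniform bound on $|G|$. In the case $\dim A=1$, the factor $Z$ is K3 or Enriques, both of which form bounded families with bounded finite-order automorphism groups, and the rational connectedness of $X=(E\times Z)/G$ again controls $|G|$ by a parallel but simpler analysis of compatible product actions.

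The principal obstacle is producing the uniform bound on $|G|$ in the case $\dim A=3$: this is a classification-type statement about finite groups acting on abelian threefolds with rationally connected quotient, which is genuinely geometric in nature and must be handled by a direct analysis of the induced action on $\Alb(Y)$ and of the fixed loci, rather than by appealing to the birational boundedness machinery of the earlier sections.
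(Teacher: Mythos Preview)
Your proposal has a genuine gap rooted in a misconception. You dismiss the route via Theorem \ref{t.bdd.mld} on the grounds that some $X \in \mathcal{D}_{ab}$ might be canonical. But a rationally connected klt Calabi--Yau variety always satisfies $\mld(X) < 1$: if $X$ were canonical with $K_X \equiv 0$, then on any resolution $\pi\colon\tilde X\to X$ one has $K_{\tilde X}=\pi^*K_X + E$ with $E$ effective, hence $\kappa(\tilde X)\ge 0$ and $\tilde X$ is not uniruled, contradicting rational connectedness of $X$. (This is exactly the observation used in the proof of Corollary \ref{c.bdd.index}.) In particular, a diagonal $\tfrac{1}{3}(1,1,1)$ quotient of $E^3$ is canonical Gorenstein and therefore \emph{not} rationally connected, so it does not furnish the obstruction you claim. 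Thus Theorem \ref{t.bdd.mld} is available after all, and the task reduces to bounding the Cartier index of $K_X$.

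This is precisely what the paper does, and the key simplification over your plan is that one does \emph{not} need to bound $|G|$: it suffices to bound the order of the character of $G$ on the one-dimensional space $H^0(Y,K_Y)$, since that order equals the index of $K_X$. For any $g\in G$, $g^*$ preserves the lattice $H^3(Y,\ZZ)$ (or $H^3$ of a resolution in the $E\times S$ case), so the eigenvalue of $g^*$ on $H^{3,0}(Y)\subset H^3(Y,\bC)$ is a root of unity whose minimal polynomial over $\bQ$ has degree at most $\rk H^3$. This rank is $20$ for an abelian threefold and $44$ for $E\times S'$ with $S'$ a K3 surface, so the eigenvalue has uniformly bounded order. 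Hence $\mld(X)\in\tfrac{1}{C}\ZZ\cap(0,1)$ for a universal $C$, giving $\mld(X)\le 1-\tfrac{1}{C}$, and Theorem \ref{t.bdd.mld} applies directly. Your proposed classification of finite group actions on abelian threefolds with rationally connected quotient is both substantially harder and unnecessary.

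A secondary issue: you write $G=\mathrm{Gal}(\gamma)$, but the cover provided by Theorem \ref{t.ggk} is not a priori Galois. The paper passes to a Galois closure via \cite[Theorem 3.7]{gkp}; in the $E\times S$ case an iterative argument invoking \cite[Theorem 1.1]{gkp} is needed to guarantee that the resulting Galois cover is still of the form (elliptic curve)$\times$(weak K3).
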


\begin{proof}
	%For the reader's convenience we divide the proof into several steps.
	
	{\bf Step 1}. We will construct a quasi-\'etale Galois cover $\gamma:Y\to X=Y/G$ such that $Y$ is either an abelian $3$-fold or a product of an elliptic curve 
	and a weak K3 surface (i.e., a normal projective surface $S$ with canonical singularities, $K_S \sim 0$ and $\tilde{q}(S)=0$, cf. \cite{MR2587100}).

	Let $\gamma\colon W_1 \to X$ be the quasi-\'etale
	cover given in Theorem \ref{t.ggk}.
	Since $\tilde{q}(X) >0$,
	it follows that $W_1$ is either an abelian $3$-fold or a product of an elliptic curve 
	and a weak K3 surface. Note that $W_1$ has canonical singularities.
	
	%{\bf Step 1}.
	%Take 
	%\begin{displaymath}
	%\xymatrix{
	%Y \ar[r] \ar@/_/[rr]_{\gamma} & Z \ar[r]^{\gamma_Z} & X
	%}
	%\end{displaymath}
	%the Stein factorization of $\gamma$.
	%As $\gamma$ is quasi\'etale, it follows
	%that $Y \to Z$ is a birational map. When $Y$
	%is an abelian threefold, then $Y \to Z$ is an isomorphism,
	%as $Y$ does not contain any rational curve, see \cite{hm07}.
	%In the other case, as $K_Y \sim 0$, it follows that also $K_Z \sim 0$,
	%hence $Z$ is canonical. Applying \cite[Proposition 18]{KL}, it follows that
	%also $Z \simeq E \times S'$, where $S'$ is a canonical surface with
	%$K_{S'} \sim 0$ .
	%Hence, up to replacing $Y$ with $Z$ and $\gamma$ with $\gamma_Z$ 
	%we can always assume that $\gamma$ is finite.
	
	In view of \cite[Theorem 3.7]{gkp},
	there exists a finite surjective morphism $\phi_1 \colon Y_1 \to W_1$
	such that $\gamma_1=\gamma\circ  \phi_1$ is Galois and the branch loci of
	$\gamma$ and $\gamma_1$ are the same. The latter property implies
	that both $\gamma_1$ and $\phi_1$ are quasi-\'etale.

	If $W_1$ is an abelian $3$-fold, then by purity of the branch locus it 
	follows that $\phi_1$ is \'etale and hence  $Y_1$ is also an
	abelian $3$-fold. In this case we complete Step 1 by setting $Y=Y_1$.
	
	Now assume that $W_1$ is a product of an elliptic curve 
	and a weak K3 surface. Note that $\tilde{q}(W_1)=1$. 
	Now we construct, by induction, a sequence of finite quasi-\'etale surjective morphism
	\begin{displaymath}
	\xymatrix{
		\cdots \ar[r] & Y_i \ar[r]^{\gamma_i} & Y_{i-1} \ar[r]^{\gamma_{i-1}} & \cdots 
		\ar[r]^{\gamma_2} & Y_1 \ar[r]^{\gamma_1} &Y_0=X,
	}
	\end{displaymath} 
	such that for each $i$, the compositions $Y_i\to X$ are Galois, and $\gamma_i: Y_i\to Y_{i-1}$ factors as 
	\begin{displaymath}
	\xymatrix{
		Y_i \ar[r]^{\phi_i} & W_{i} \ar[r]^{\psi_{i}} & Y_{i-1},}
	\end{displaymath} 
	where $W_i$ is a product  of an elliptic curve 
	and a weak K3 surface. Moreover, $\psi_i$ is \'etale if $i>1$.
	
	The construction is as follows: assume that we have constructed $Y_{i-1}$, then $K_{Y_{i-1}}\sim 0$ and  $Y_{i-1}$ has canonical singularities. By \cite[Theorem 8.3, Corollary 8.4]{Kawamata85}, there exists an \'etale covering $\psi_i: W_{i} \to Y_{i-1}$ where $W_i=C\times Z$, $C$ is an abelian variety and $Z$ is a canonical variety with $K_Z\sim 0$. We only need to show that in our case, $C$ is an elliptic curve and $Z$ is a surface with $\tilde{q}(Z)=0$. Assume not, then it is easy to see that $\tilde{q}{(C\times Z)}>1=\tilde{q}(W_1)$, which is absurd since $\tilde{q}$ is invariant under quasi-\'etale covers.
	Then  $\psi_i: W_{i} \to Y_{i-1}$ is constructed by applying  Theorem \ref{t.ggk}, and $\phi_i: Y_i \to W_{i}$ is constructed by applying \cite[Theorem 3.7]{gkp} to the composition $W_i\to X$.
	%\begin{enumerate}
	%\item[(1)] 
	%$\psi_{i}: 
	%if $Y_i= E_i \times S_i$ for an elliptic curve $E_i$
	%and a canonical surface with $K_{S_i} \sim 0$, then we stop; % and 
	%substitute $Y$ with $Y_1$ and $\gamma$ with $\gamma\circ \gamma_1 \circ \dots \gamma_i$;
	%\item[(2)] if $Y_i\neq E_i \times S_i$, for an elliptic curve $E_i$
	%%and a canonical surface with $K_{S_i} \sim 0$, we use Theorem \ref{t.ggk} and get a map
	%$\psi_i\colon F_i \times T_i \to Y_i$. % Using Step 1 we can assume that $\psi_i$ is finite.
	%If $\gamma\circ \gamma_1 \circ \dots \gamma_i\circ \psi$ is Galois then we 
	%put $Y_{i+1}= F_i \times T_i$ and $\gamma_{i+1} = \psi_i$ and we stop;
	%\item[(3)] if $\gamma\circ \gamma_1 \circ \dots \gamma_i\circ \psi$ is not Galois,
	%we use Step 2 and we define $Y_{i+1}$ as the Galois closure of the morphism 
	%$\gamma\circ \gamma_1 \circ \dots \gamma_i\circ \psi$,
	%\begin{displaymath}
	%\xymatrix{
	%Y_{i+1} \ar[r]^{\phi_i} & F_i \times T_i \ar[r]^{\psi_i} & Y_i \ar[rrr]^{\gamma\circ \gamma_1 \circ \dots \circ  \gamma_i} & & & X,
	%}
	%\end{displaymath}
	%and $\gamma_{i+1} = \psi_i \circ \phi_i$.
	%At this point we go back to (1).
	%\end{enumerate}
	
	%{\bf Step 3}. %A priori this procedure does not necessarily stop in finite time.
	%Let us assume this is the case. 
	By \cite[Theorem 1.1]{gkp}, it follows that there exists $i>1$ such 
	that $\gamma_i$ is \'etale. %By \cite[Proposition II.8.2.1]{hart} i
	It follows that
	the morphism $Y_i \to W_i$ is \'etale (see \cite[Corollary 3.6]{milne}). We may write $W_i=E\times S$. As $S$ is simply connected,
	see \cite[Theorem 7.8]{k.shaf}, it follows that $Y_i \simeq E' \times S$ for some
	elliptic curve $E'$. % This shows that the procedure from Step 3 must stop in finite time
	%$n$.
	Hence we complete Step 1 by taking $Y=Y_i$.
	%Hence, up to replacing $W$ with $Y_i$ and $\gamma$ 
	%with the composition $Y_i \to X$, 
	%we can assume that $Y$ is either an abelian threefold or a 
	%product of an elliptic curve and a weak K3 surface, while $\gamma$ is
	%Galois, that is, $X=Y/G$. 
	
	{\bf Step 2}. We will show that the group action of $G$ on $Y$ is bounded.
	
	As $X=Y/G$ and $Y$ is Gorenstein, if we bound the order 
	of the representation of $G$ on the vector space $H^0(Y, K_Y)$ by a universal
	constant $C$, then the index of $K_X$ is at most $C!$.
	Boundedness modulo flops of $\mathcal{D}_{ab}$ then follows from Theorem \ref{t.bdd.mld}, as the discrepancies of $X$ will all be contained in $\frac{1}{C!} \mathbb{Z}$.
	\iffalse
	is isomorphic to the quotient by the action of a finite
	group of automorphisms of $Y$, see \cite[Definition 3.6]{gkp}.
	If we can prove that for every $X \in \mathcal{D}_{ab}$ 
	the order of $G$ is bounded from above 
	by a constant $C \in \mathbb{N}_{>0}$, then the statement of the theorem
	follows by Corollary \ref{c.bdd.index}, since $CK_X \sim 0$.
	As the group $G$ acts faithfully on $H^0(Y, K_Y) = \mathbb{C}\omega$, 
	it is enough to show that for an abelian $3$-fold or a product of an
	elliptic curve and a canonical surface with trivial canonical bundle there exists
	$C \in \mathbb{N}_{>0}$ such that  if $g$ is an automorphism 
	then $g^\ast \omega= \zeta \omega$ where $\zeta$ is a primitive $n$-root
	of unity and $n\leq C$.
	\fi
	
	{\bf Step 2.1}.
	When $Y$ is an abelian $3$-fold, the above claim follows immediately
	from the fact that $\mathrm{rk} \;H^3(Y, \mathbb{Z}) = 20$ and for any $g \in G, \; 
	g^\ast$ is an automorphism of $H^3(Y, \mathbb{Z})$ defined over the 
	integers, hence its minimal polynomial has degree $\leq 20$. That
	implies that any eigenvalue of $g^\ast$ is a root of unity of bounded
	order. 
	
	{\bf Step 2.2}.
	When $Y$ is a product of an elliptic curve $E$ and 
	a weak K3 surface $S$ the claim follows in the same vein.
	In fact, take $S'\to S$ the minimal resolution, then $S'$ is a K3 surface, and $g$ lifts to an automorphism of $E\times S'$. This
	implies that any eigenvalue of $g^\ast$ is a root of unity of bounded
	order since $\mathrm{rk} \;H^3(E\times S' , \mathbb{Z}) = 44$.
	\iffalse
	In fact, as $H^0(Y, K_Y) = H^0(E, K_E) \otimes H^0(S, K_S)$,
	it is enough to bound the order of the action of $g^\ast$ given
	by an element $g \in G$ on $H^0(E, K_E)$ and $H^0(S, K_S)$
	separately. As the projection $Y \to E$ is the Albanese map,
	there is a natural action induced by an element 
	$g_E \in \mathrm{Aut}(E)$ and the 
	minimal polynomial of $g^\ast$ on 
	$H^0(E, K_E) = H^1(Y, \Omega^{[1]}_Y)$ has degree at most $2$.
	Moreover, we can write $g\colon E\times S \to E \times S$ in the form 
	\[
	g(e, s) = (g_E(e), g_S(e, s)),
	\]
	where $g_S \colon E \times S \to S$ is a 
	family of automorphisms of $S$ parametrized by $E$.
	As the automorphisms of $S$ are discrete, it follows that 
	$g_S(e, s)$ does not depend on $E$. Thus,
	\[
	g(e, s) = (g_E(e), g_S(s)).
	\]
	The automorphism $g_S$ of $S$ lifts to the minimal 
	resolution $S'$ of $S$ and analogously to what we have 
	done in Step 2.1, we can bound the mutiplicative order
	of the eigenvalues of $g_S^\ast$ on $H^0(S, K_S)
	= H^0(S', K_{S'})$ as $\mathrm{rk} \;
	H^2(S', \mathbb{Z}) =22$.
	\fi
\end{proof}

\appendix
\section{On rationally connectedness of varieties of CY-type}\label{appendix}

The goal of this appendix is to give a sufficient condition for a dlt log Calabi--Yau pair to be rationally connected. It was suggested by Chenyang Xu and carried out during a discussion with Zhiyu Tian.

Given a log pair $(X, B)$, recall that a subvariety $C\subset X$ is said to be a {\it log center} or {\it non-canonical center}  if there is a prime divisor $E$ over $X$ with center $C$ such that the log discrepancy $a(E, X, B)<1$.

The following is the main result of this appendix.

\begin{thm}\label{thm:dltcyrc}
	Let $(X,B)$ be an lc log Calabi--Yau pair with a log center $C$, % which is rationally chain connected, 
	then $X$ is rationally chain connected modulo $C$.
	In particular, if $C$ is rationally chain connected, then  $X$ is rationally chain connected.
\end{thm}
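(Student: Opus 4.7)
Plan: The strategy is to reduce, via a dlt modification and an MMP argument, to the known fact that a dlt log Calabi--Yau pair is rationally chain connected modulo any of its non-klt (i.e.\ lc) centers, and then to conclude by induction on $\dim X$.

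First, I would apply \cite[Corollary 1.4.3]{BCHM} to a log resolution of $(X,B)$ to produce a $\bQ$-factorial dlt modification $\pi\colon (Y, B_Y) \to (X, B)$ with $K_Y + B_Y = \pi^*(K_X + B)$, extracting the divisor $E$ realizing $a(E,X,B) < 1$. Then $E$ is a prime component of $B_Y$ with coefficient $c := 1 - a(E, X, B) \in (0, 1]$, and since $\pi$ is birational with $\pi(E) = C$, it suffices to prove that $Y$ is rationally chain connected modulo $E$. When $c = 1$, the divisor $E$ is an lc place of the dlt log Calabi--Yau pair $(Y, B_Y)$, so the conclusion follows from the standard result on rational chain connectedness modulo non-klt centers of log Calabi--Yau pairs.

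When $c \in (0,1)$, the plan is to reduce to the previous case by an MMP argument. I would consider the perturbed pair $(Y, B_Y + (1-c)E)$, which has $E$ with coefficient $1$ but satisfies $K_Y + B_Y + (1-c)E \equiv (1-c)E$. Running a $(K_Y + B_Y + (1-c)E)$-MMP with scaling, which is a $(1-c)E$-MMP on an effective hence pseudo-effective divisor, terminates by \cite{BCHM} with a minimal model on which $(1-c)E'$ is semiample. The associated contraction $g\colon Y' \to Z$ then allows one to apply Ambro's canonical bundle formula \cite[Theorem 3.1]{FG}, transferring the log Calabi--Yau structure and the log-center datum at $E$ down to a lower-dimensional pair $(Z, B_Z)$. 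Induction on $\dim X$ yields rational chain connectedness of $Z$ modulo $g(E')$; combined with rational chain connectedness of the general fibers of $g$ (which inherit a smaller-dimensional log Calabi--Yau structure with induced log centers), this gives the desired rational chain connectedness of $Y'$ modulo $E'$, hence of $Y$ modulo $E$. The base cases are $\dim X = 1$, which forces $X = \bP^1$ (a curve of genus $\geq 1$ admits no log center in a log Calabi--Yau pair), and $\dim X = 2$, which can be checked directly. The ``in particular'' clause is immediate, since rational chain connectedness modulo $C$ together with rational chain connectedness of $C$ allows one to link chains through $C$.

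The main obstacle lies in the case $c < 1$: one must ensure that the $(1-c)E$-MMP does not contract $E$ itself (which would change the dimension of the center we are tracking) and that the induced structure on the base $Z$ still satisfies the hypotheses of the theorem. This requires careful analysis of how $E$ transforms under flips and divisorial contractions, together with precise control of the discrepancy and moduli contributions propagated to $Z$ via the canonical bundle formula. An alternative attempt via bend-and-break to directly produce rational curves through general points of $Y$ meeting $E$ seems unlikely to work, since $-K_Y \equiv B_Y$ is only effective (not big) in general and the Miyaoka--Mori length estimates do not apply.
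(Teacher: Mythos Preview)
Your approach in the case $c<1$ contains a genuine gap. Even granting termination and semiampleness (neither of which is supplied by \cite{BCHM} here, since the perturbed pair is only dlt and its log canonical class $(1-c)E$ need not be big), once you reach a model $Y'$ on which $(1-c)E'$ is semiample and take the associated contraction $g\colon Y'\to Z$, the divisor $E'$ is numerically trivial on the general fibre $F$: one has $(1-c)E'\equiv g^*A$ for $A$ ample on $Z$, hence $E'|_F\equiv 0$, and since $E'|_F$ is effective this forces $E'\cap F=\varnothing$ for general $F$. The pair $(F, B_{Y'}|_F)$ is then log Calabi--Yau but inherits \emph{no} log center from $E$, contrary to your assertion; nothing prevents $F$ from being, say, an abelian variety, and the inductive step collapses. (If instead $E'$ happens to be big, then $\dim Z=\dim Y'$ and no dimension is gained, so induction again fails.)

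The paper runs the MMP in the \emph{opposite} direction. On the dlt modification one has $K_{X'}+B'+(1-a)E\equiv 0$, so $K_{X'}+B'\equiv -(1-a)E$ is not pseudo-effective; a $(K_{X'}+B')$-MMP with scaling therefore terminates in a Mori fibre space $Y\to Z$ by \cite{BCHM}. Because this is a $(-E)$-MMP, the strict transform $E_Y$ is relatively ample and in particular dominates $Z$. Hacon--M\textsuperscript{c}Kernan \cite[Theorem~1.2]{hm07} then gives rational chain connectedness of every fibre of the graph $W\to Z$, and since $E_W$ meets each such fibre, $X'$ is rationally chain connected modulo $E$. This treats all $c\in(0,1]$ uniformly, with no induction, no canonical bundle formula, and no appeal to the unnamed ``standard result'' you invoke in the $c=1$ case, which is essentially the theorem itself.
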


As a simple corollary, we have the following result. %which was a question asked by the first author.
\begin{cor}\label{cor:dltcydisrc}
	Let $(X,B)$ be a dlt log Calabi--Yau pair with a $0$-dimensional log center. Then  $X$ is rationally connected.
\end{cor}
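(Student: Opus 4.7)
The plan is to deduce Corollary \ref{cor:dltcydisrc} essentially as a direct consequence of Theorem \ref{thm:dltcyrc}, combined with a standard upgrade from rational chain connectedness to rational connectedness that is available under mild singularity hypotheses.

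First, I would observe that since the log center $C$ is $0$-dimensional, it consists of finitely many closed points and is therefore trivially rationally chain connected. Applying Theorem \ref{thm:dltcyrc} directly gives that $X$ is rationally chain connected.

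It then remains to upgrade rational chain connectedness to rational connectedness. For this I would exploit that $X$ has klt-type singularities: since $(X, B)$ is dlt, one can slightly decrease the coefficients of $B$ so as to obtain an effective $\bR$-divisor $B' \leq B$ with all coefficients strictly less than $1$ such that $(X, B')$ is klt. In particular $X$ admits a klt pair structure and hence has rational singularities. I would then invoke the theorem of Hacon--McKernan, asserting that a proper normal variety admitting a klt pair structure is rationally connected if and only if it is rationally chain connected. Combined with the previous paragraph this yields the desired conclusion.

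The essentially only nontrivial input is Theorem \ref{thm:dltcyrc}, where the genuine work lies; the remaining steps are routine. The only minor point to verify is that decreasing the coefficients of $B$ in a dlt pair strictly below $1$ produces a klt pair, which follows from the standard characterization of dlt singularities via a log resolution.
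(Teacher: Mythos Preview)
Your proposal is correct and matches the paper's approach: the paper's proof simply cites Theorem \ref{thm:dltcyrc} together with \cite[Corollary 1.5(2)]{hm07}, the Hacon--M\textsuperscript{c}Kernan result you invoke. The only cosmetic difference is that their cited statement applies directly to dlt pairs, so the intermediate step of perturbing to a klt boundary is unnecessary (though harmless).
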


%Using a similar argument of the proof of Theorem \ref{thm:dltcyrc}, we get a sufficient condition on lc log Calabi--Yau pairs to be rationally chain connected.   
%\begin{thm}\label{thm:lccyrc}
%	Let $(X,B)$ be a lc log Calabi--Yau pair. If $(X,B)$ has a log center $C$ which is rationally chain connected, then $X$ is rationally chain connected. 
%\end{thm}

\begin{proof}[Proof of Theorem \ref{thm:dltcyrc}]
	By assumption, there exists a prime divisor $E$ over $X$, such that the center of $E$ on $X$ is $C$, and $a:=a(E,X,B)<1$. By \cite[Corollary 1.4.3]{BCHM}, after taking a $\bQ$-factorial dlt modification of $(X, B)$, there exists a birational morphism $f:X'\to X$, such that
	$$K_{X'}+B'+(1-a)E=f^{*}(K_X+B)\equiv0,$$
	where $B'$ is the sum of the birational transform of $B$ on $X'$ and the exceptional divisors of $f$ except for $E$. It suffices to show that $X'$ is rationally chain connected modulo $E$.% If this is the case, $X$ is rationally chain connected modulo $C$, hence $X$ is rationally chain connected. By \cite[Corollary 1.8]{HM}, $X$ is rationally connected.
	
	Run a $(K_{X'}+B')$-MMP with scaling of an ample divisor on $X'$, according to \cite[Corollary 1.3.3]{BCHM}, this MMP ends up with a Mori fiber space $ Y\to Z$.
	$$\xymatrix@=2.5em{
		&        W  \ar[dl]_{p}  \ar[dr]^{q}                &\\	
		X'\ar@{-->}[rr]^{\pi}\ar[d]^{f}  & &   Y\ar[d]^{g}\\
		X         &  &Z\\
	}
	$$ 
	Let $W\subset X'\times Y$ be the closure of the graph of $\pi$, and $p$ and $q$ are the projections from $W$ to $X'$ and $Y$, respectively. Since $-K_Y$ is ample over $Z$ and $Y$ is klt,  according to \cite[Theorem 1.2]{hm07}, every fiber of $W\to Z$ is rationally chain connected. 
	
	Let $E_Y,E_W$ be the strict transforms of $E$ on $Y$ and $W$, respectively. Since this MMP is also a $(-E)$-MMP, $E_Y$ dominates $Z$ and so does $E_W$. Thus, $W$ is rationally chain connected modulo $E_W$, and hence $X'$ is rationally chain connected modulo $E$. We complete the proof.
\end{proof}
\begin{proof}[Proof of Corollary \ref{cor:dltcydisrc}]
	Follows easily from Theorem \ref{thm:dltcyrc} and \cite[Corollary 1.5(2)]{hm07}.
\end{proof}

\end{document}